\pdfoutput=1
\documentclass[11pt,reqno]{amsart}
\usepackage[a4paper,margin=.95in,footskip=0.3in]{geometry}
\usepackage{mathrsfs}
\usepackage{amssymb}
\usepackage{mathtools}
\usepackage{tikz-cd}
\usepackage{enumitem}
\usepackage[all,cmtip]{xy}
\usepackage[square, comma, numbers, sort&compress]{natbib}
\usepackage[T1]{fontenc}
\usepackage{booktabs}
\usepackage{pgf,tikz}
\usepackage{tikz-cd}
\usepackage{subcaption}
\usepackage{setspace}
\usepackage{csquotes}
\usepackage{epigraph}
\usepackage{extarrows}
\usetikzlibrary{arrows}
\onehalfspacing
\definecolor{qqqqff}{rgb}{0,0,1}
\usepackage{lscape}
\usepackage{bm}
\usepackage{todonotes}

\PassOptionsToPackage{pdfusetitle,pagebackref,colorlinks}{hyperref}
\usepackage{bookmark}
\hypersetup{
  linkcolor={red!70!black},
  citecolor={blue!70!black},
  urlcolor={blue!80!black}
}
\usepackage[capitalize]{cleveref}
\makeatletter
  \def\th@plain{
  \thm@headfont{\bfseries} 
  \thm@notefont{\itshape} 
  \itshape
}
\makeatother

\makeatletter
  \def\th@definition{
  \thm@headfont{\bfseries} 
  \thm@notefont{\bfseries} 
}
\makeatother

\makeatletter
  \def\th@remark{
  \thm@headfont{\bfseries} 
  \thm@notefont{\bfseries} 
	}
\makeatother

\newtheorem{theorem}{Theorem}[section]
\newtheorem{lemma}[theorem]{Lemma}
\newtheorem{proposition}[theorem]{Proposition}
\newtheorem{corollary}[theorem]{Corollary}

\theoremstyle{question}
\newtheorem{question}[theorem]{Question}
\theoremstyle{definition}
\newtheorem{definition}[theorem]{Definition}

\theoremstyle{remark}
\newtheorem{remark}[theorem]{Remark}

\newtheoremstyle{cited}{.5\baselineskip\@plus.2\baselineskip\@minus.2\baselineskip}{.5\baselineskip\@plus.2\baselineskip\@minus.2\baselineskip}{\itshape}{}{\bfseries}{\bfseries .}{5pt plus 1pt minus 1pt}{\thmname{#1}\thmnumber{~#2}\thmnote{ \normalfont#3}}
\theoremstyle{cited}
\newtheorem{citedthm}[theorem]{Theorem}

\newtheorem{citedlem}[theorem]{Lemma}
\newtheorem{citedprop}[theorem]{Proposition}

\newtheoremstyle{citeddef}{.5\baselineskip\@plus.2\baselineskip\@minus.2\baselineskip}{.5\baselineskip\@plus.2\baselineskip\@minus.2\baselineskip}{}{}{\bfseries}{\bfseries .}{5pt plus 1pt minus 1pt}{\thmname{#1}\thmnumber{~#2}\thmnote{ \normalfont#3}}
\theoremstyle{citeddef}

\title[symplectic birational self-maps of HKs of K$3^{[n]}$-type]{On symplectic birational self-maps of projective hyperk\"{a}hler manifolds of K3$^{[n]}$-type}
\date{}
\author{Yajnaseni Dutta}
\address{Universiteit Leiden, Mathematisch Instituut, Niels Bohrweg 1, 2333 CA, Leiden, Netherlands}
\email{y.dutta@math.leidenuniv.nl}

\author{Dominique Mattei}

\address{Universit\"at Bonn, Endenicher Allee 60, 53115 Bonn, Germany}
\email{dmattei@math.uni-bonn.de}

\author{Yulieth Prieto--Montañez}

\address{The Abdus Salam International Centre for Theoretical Physics, Str. Costiera, 11, 34151 Trieste TS, Italy }
\email{yprieto@ictp.it}

\newcommand\hq{\mathbb Q}
\newcommand\hz{\mathbb Z}
\newcommand\Z{\mathbb Z}

\newcommand\ho{\mathrm{O}}

\newcommand\F{\mathcal{F}}


\newcommand{\function}{\longrightarrow}

\newcommand{\rk}{\operatorname{rank}}

\newcommand{\NS}{\operatorname{NS}}

\newcommand{\sign}{\text{sign}}

\newcommand{\dive}{\text{div}}

\newcommand{\Exc}{\text{Exc }}


\newcommand{\ign}[1]{}

\newcommand{\Br}{\operatorname{Br}}
\newcommand{\LambdaKn}{\Lambda_{\operatorname{K3}^{[n]}}}
\newcommand{\ord}{\operatorname{ord}}

\newcommand{\MukaiLat}{\widetilde{\Lambda}}



\DeclareMathOperator{\Aut}{Aut}


\DeclareMathOperator{\Pic}{Pic}


\DeclareMathOperator{\Mov}{Mov}

\DeclareMathOperator{\Tr}{T} 


\DeclareMathOperator{\Amp}{Amp}
\DeclareMathOperator{\Nef}{Nef}
\DeclareMathOperator{\Mon}{Mon^2}
\DeclareMathOperator{\MonB}{Mon^2_{Bir}}
\DeclareMathOperator{\MonH}{Mon^2_{Hdg}}
\DeclareMathOperator{\Bir}{Bir}





\DeclareMathOperator{\D}{D}


\DeclareMathOperator{\Stab}{Stab}
\DeclareMathOperator{\Stabd}{Stab^\dagger}


\DeclareMathOperator{\Coh}{Coh} 


\newcommand\matR{\mathbb{R}}
\newcommand\matQ{\mathbb{Q}}

\newcommand\matZ{\mathbb{Z}}


\newcommand\calA{\mathcal{A}}

\newcommand\calC{\mathcal{C}}

\newcommand\calE{\mathcal{E}}
\newcommand\calF{\mathcal{F}}

\newcommand\calH{\mathcal{H}}

\newcommand\calO{\mathcal{O}}

\newcommand\calT{\mathcal{T}}

\newcommand\calW{\mathcal{W}}


\newcommand{\sH}{\mathcal{H}}

\newcommand{\sW}{\mathcal{W}}



\newcommand{\bbC}{\mathbb{C}}

\newcommand{\bbQ}{\mathbb{Q}}
\newcommand{\bbR}{\mathbb{R}}
\newcommand{\bbZ}{\mathbb{Z}}




\DeclareMathOperator{\dv}{div}
\DeclareMathOperator{\Id}{Id}

\begin{document}

	\begin{abstract}
		We prove that projective hyperk\"{a}hler manifolds of K3$^{[n]}$-type admitting a non-trivial symplectic birational self-map of finite order
        are isomorphic to moduli spaces of stable (twisted) coherent sheaves on K3 surfaces. 
		Motivated by this result, we analyze the reflections on the movable cone of moduli spaces of sheaves 
        and determine when they come from a birational involution.
	\end{abstract}

 \maketitle
	
	\setcounter{tocdepth}{1}
	\tableofcontents
	
	\section{Introduction}
	
	A hyperk\"ahler manifold (\textsl{HK manifold} for short) is a simply connected compact K\"ahler manifold $X$ admitting an everywhere (unique up-to scalar) non-degenerate holomorphic $2$-form $\sigma$ that generates $H^0(X,\Omega_X^2)$. We say that a HK manifold is of K3$^{[n]}$-type if it is a smooth deformation of the Hilbert scheme of $n\geq 2$ points on a K3 surface. The goal of this paper is to study (symplectic) birational self-maps of finite order on such HK manifolds. 
	Our first result identifies HK manifolds of K3$^{[n]}$-type admitting such maps.
	
	\begin{theorem} \label{MainThmSympBirImpliesModuli}
		Let $X$ be a projective HK manifold of K3$^{[n]}$-type admitting a non-trivial finite order symplectic birational self-map. 
		Then $X$ is isomorphic to a moduli space of (twisted) sheaves on a K3 surface.
	\end{theorem}

	In particular, $X$ has Picard rank at least $2$. In other words, a very general projective HK manifold of K3$^{[n]}$-type cannot admit symplectic finite order birational self-maps. This follows also from \cite[Prop.\ 4.3]{DebarreHKMan}, which in addition confirms that our result is optimal in the sense that certain very general HK manifolds of K$3^{[n]}$-type do indeed admit finite order automorphisms, however, these are always anti-symplectic. 
	
	In fact, the proof of Theorem \ref{MainThmSympBirImpliesModuli} (in \S \ref{SectionCohActionSymplecticBirMaps}) reveals that if the finite order birational map is biregular (i.e.\ an automorphism) or if the order is larger than 2, then the rank of the Picard group of $X$ must be at least 8.
	
	When the Picard rank is $2$, it turns out that all such maps are automatically involutions. 
	\begin{theorem}\label{PropPicRk2AllInvolutions}
		Let $X$ be a HK manifold of K$3^{[n]}$-type. Assume that $X$ has Picard rank $2$. Then any non-trivial finite order birational self-map has order 2. Furthermore, if such a map is an automorphism, it must be anti-symplectic. 
	\end{theorem}
	
	It is known that whenever the group $\Bir(X)$ of birational self-maps is finite,  $\Bir(X)\simeq (\matZ/2\matZ)^{\oplus m}$ with $m=1$ or $2$ \cite[Thm.\ 4.9]{DebarreHKMan}. Moreover, for any K3 surface $S$ of Picard rank $1$, the group $\Bir(S^{[n]})$ is finite \cite[Cor.\ 5.2]{OguisoAutomGrpCYManPicRk2}.
	
	In the next part of the paper, using Bridgeland stability on K3 surfaces, we study the existence of finite order symplectic birational self-maps on moduli spaces of stable pure sheaves $X=  M(S,v)$ (and their generalizations) on K3 surfaces 
	for $v$ a primitive Mukai vector.

	Mongardi in \cite[Thm.\ 26]{Mon16} shows that any symplectic (biregular) automorphism of finite order acts as $\Id$ on the discriminant group $A_X$ (introduced in \S \ref{secpreliminaries} below). We deduce in Corollary \ref{CoroSympReflectionActsNonTrivDiscGrp} that if a birational self-map $g$ induces a reflection in $\ho(H^2(X,\matZ), q_X)$, then it follows that $g^*|_{A_X}=-\Id$. Hence $g$ cannot be regular everywhere. Birational self-maps that are not biregular are hard to come across (for instance, on K3 surfaces there are none). On the Hilbert scheme of points on K3 surfaces of Picard rank one, such maps have been classified in \cite{BeriCattaneoBiratTransfHilbSchemeK3Surf}. 
	Typical examples of such symplectic birational maps are described by Markman in \cite[\S 11.1, 11.2]{Mar13} (see \S \ref{SectionMarkmanExamples} for one of the examples). In our next main result, we provide a list of Mukai vectors on K3 surfaces that satisfy specific numerical conditions. This list corresponds to cases where symplectic birational (non-biregular) self-maps exist on the associated moduli space of (complexes of) sheaves.

	\begin{theorem}\label{MainThm2}
		Let $S$ be a K3 surface and $v=(r, cD, s)$, $D\in\NS(S)$, be a primitive Mukai vector, $r\neq 0$. Consider a moduli space $M\coloneqq  M_\sigma(S,v)$ of stable objects with respect to a $v$-generic stability condition $\sigma$. For $e = (r, cD, (cD)^2/r - s)$, the reflection $R_e\in O(H^2(M,\matZ))$ is induced by a birational involution if and only if $v$ satisfies $r\mid 2c$, $\gcd(r,s)=1$ or $2$, $r\neq 1$ or 2 and $v $ is not one of the following series of Mukai vectors
		\begin{enumerate}
			\item $(r, krD, \frac{k^2D^2r}{2} - m)$ for some $k\in\matZ$ and $m=1$ or $2$.
			\item $D^2\equiv 0 \pmod{4}$, $v=(2a,maD,\dfrac{D^2}{4}m^2a-1)$ 
			for some integers $a\geq 2$, $m$ odd. 
			\item $D^2\equiv 2 \pmod{4}$, $v=(2a,maD,\dfrac{D^2m^2a-2}{4})$
			for some integers $a\geq 2$ odd, $m$ odd.
		\end{enumerate}
	\end{theorem}

	The vector $e$ is orthogonal to the so-called \textit{vertical wall} in $\NS(M)_\matR$, see Definition \ref{def:verticalwall}.
	In fact, we note in Corollary \ref{verticalwallonetwo} that if $v$ falls into the last three cases, $M\simeq M(S',v')$ (up to a twist by a Brauer class) for a derived equivalent K3 surface $S'$ and a Mukai vector $v'= (r',c'D', s')$ such that $r' =1$ or 2, and $R_e$ induces a reflection $R_{e'}$ with $e' = (r', c'D', \frac{c'^2D'^2}{r'} - s')$. So in a sense, ranks 1 and 2 are the only problematic cases.

  A complete classification of Mukai vectors for which the associated moduli spaces admit symplectic involutions that are birational but not biregular will entail studying other relevant reflections in $\NS(S)_\matR$. To this end, we show in Corollary \ref{CoroAllReflectionAreVertical} that whenever such a reflection $R$ exists, after changing $S$ and $v$ to $S'$ and $v'$ under a Fourier--Mukai transform, we can assume that $M=M(S',v')$ and $R$ is as in Theorem \ref{MainThm2} with respect to $v'$. 
	
	\subsection{Preliminaries} \label{secpreliminaries}

        The purpose of this section is to provide a review of the key properties and constructions of HK manifolds, with a particular focus on those of K3$^{[n]}$-type. This overview will serve as a fundamental framework for the subsequent parts of the paper.
        
        A crucial tool in the study of HK manifolds is the second cohomology group $H^2(X,\mathbb{Z})$ endowed with the Beauville--Bogomolov--Fujiki form $q_X$. In the case where $X$ is a HK manifold of K3$^{[n]}$-type, the pair $(H^2(X,\mathbb{Z}),q_X)$ is an indefinite lattice of signature $(3, 20)$.
        We denote by $\ho(H^2(X,\bbZ),q_X)$ the group of isometries of $H^2(X,\bbZ)$ and by $\ho(A_X,\bar{q}_X)$ the group of isometries of the \textsl{discriminant group} $A_X \coloneqq  H^2(X,\bbZ)^{\vee}/H^2(X,\bbZ)$.

        Birational self-maps on HK manifolds naturally induce automorphisms on $H^2(X,\bbZ)$. In \cite[Lem.\ 2.6]{HuyCompactHK}, Huybrechts shows that the quadratic form $q_X$ behaves well under birational maps.  As a consequence, a well-defined group homomorphism exists:

		$$\Bir(X) \to \ho(H^2(X,\matZ), q_X).$$
        For HK manifolds of K$3^{[n]}$-type, this homomorphism is not only well-defined but also injective (\cite{BeauvilleSomeRksKahlerManifoldsC10}, \cite{HassetTschinkelHdgTheoryLagPlanesGenKummer}). We denote by $\MonB(X)$ its image.
        The group $\Mon(X)$ of monodromy operators, corresponding to parallel transport isometries of $H^2(X,\bbZ)$, is of great importance due to the birational Torelli theorem (\cite{VerbitskyMappingClassGrpGlobalTorelli}, \cite{VerbitskyErrataMappingClassGrpGlobalTorelli}, see \cite[Thm.\ $1.3$]{Mar13}). According to \cite[Lem.\ 4.2]{Mar10}, under the assumption that $X$ is of K$3^{[n]}$-type, the group $\Mon(X)$ can be identified as the preimage of $\{\Id,-\Id\}$ through the natural surjective map \cite[Prop.\ 1.14.2]{Nik79}:
		$$ \ho(H^2(X,\matZ), q_X) \to \ho(A_X, \bar{q}_X).$$

		The subgroup $\MonH(X)< \Mon(X)$ preserving the Hodge structure splits as the semidirect product
		$\MonH(X) = W_\text{Exc} \rtimes \MonB(X)$,
		where $W_\text{Exc}$ is the Weyl subgroup generated by reflections of the form
		$$R_E : \alpha \mapsto \alpha -\dfrac{2(\alpha,[E])}{([E],[E])}[E]$$
		where $E\in\Pic(X)$ is a \textsl{prime exceptional divisor} (i.e. reduced and irreducible effective divisors with $q_X(E)<0$). While $R_E$ is a priori only defined over $\matQ$, Markman proved that it is in fact an integral monodromy operator \cite{Mar13}.

	 A key input in this paper is Markman's detailed analysis of various isometries and domains in the N\'eron--Severi lattice $\NS(M)_{\bbR}$ in \cite{Mar11, Mar13}.
	 The \textsl{positive} cone $\calC_X\subset \NS(X)_\matR$ of classes $\alpha$ with $q_X(\alpha)\geq0$ contains the (non-necessarily closed) \textsl{movable} cone $\Mov(X)$, generated by divisors whose base locus has codimension at least $2$. The \textsl{nef} cone $\Nef(X)$ is contained in $\overline{\Mov(X)} \subset \calC_X$.

	 The closure $\overline{\Mov(X)}$ is a fundamental domain for the action of the Weil group $W_\Exc$ on $\calC_X$, while $\MonB(X)$ preserves $\Mov(X)$ (see \cite[Prop. $3.15$]{DebarreHKMan}). In fact, the numerical conditions in Theorem \ref{MainThm2} ensures that $R_e$ is a monodromy operator. The three cases appearing in Theorem \ref{MainThm2} are exactly the cases for which $e$ is the class of a prime exceptional divisor, that is $R_e\in W_\Exc$. 

	Particularly significant examples within  K$3^{[n]}$-type HK manifolds are moduli space of sheaves on K3 surfaces. Given a K3 surface $S$, one considers the lattice $\widetilde{H}(S,\bbZ)\simeq H^0(S,\bbZ)\oplus H^2(S,\bbZ)\oplus H^4(S,\bbZ)$ together with an intersection form defined by $(r,c,s)\cdot(r',c',s')=c\cdot c'-rs'-sr'$. Given a primitive algebraic Mukai vector $v\in \widetilde{H}(S,\bbZ)$ and $H\in\Pic(S)$ an ample line bundle, $M_H(S,v)$ denotes the Mumford-Gieseker moduli space of stable pure sheaves on $S$ with Mukai vector $v$ (we drop $S$ from the notation when the context is clear). For $H$ generic this is a HK manifold of K3$^{[n]}$-type when $v^2 \geq -2$, thanks to the work of many authors, notably Mukai, G\"ottsche, Huybrechts, O'Grady and ultimately Yoshioka \cite[Thm.\ 8.1]{Yos01}.

	\textbf{Notation:} Throughout this text, the pairing of a lattice $\Lambda$ is denoted by $(-,-)$ defined over the integers. We often conveniently denote $(w,w)$ by simply $w^2$. The saturation of a sublattice $\Lambda_1\subset \Lambda$ is denoted by $\overline{\Lambda_1}$. 
	Given an element $w\in \Lambda$, the \textsl{divisibility}, denoted $\dv(w)$, is defined to be the integer $n$ such that the image of the map $(\_, w)\colon \Lambda \to \bbZ$ is given by the ideal $n\bbZ$.
 
 K3 surfaces and HK manifolds considered here are projective. 
	For a HK manifold $X$ of K3$^{[n]}$-type,
	the lattice $(H^2(X, \bbZ), q_X)$ is isomorphic to $ \LambdaKn\coloneqq U^{\oplus 3} \oplus E_8(-1)^{\oplus 2}\oplus \langle -2(n-1)\rangle,$ where $U$ is the hyperbolic plane and $E_8$ is the unique even, unimodular, positive definite lattice of rank 8.
	The \textsl{Mukai Lattice}, denoted by $\MukaiLat$, is the abstract lattice $U^{\oplus 4}\oplus E_8(-1)^{\oplus 2}$ which is isomorphic to $\widetilde{H}(S,\bbZ)$ for any K3 surface $S$.
	
	Given a Hodge structure of weight 2 on a lattice $L$ (e.g.\ $\widetilde{H}(S,\matZ)$ or $H^2(X,\bbZ)$), we define the \textsl{algebraic part} of $L$ as $L^{1,1}_{\bbZ}\coloneqq  L^{1,1}_{\bbC}\cap L$. For instance, $\widetilde{H}(S, \bbC)$ comes with a polarized weight 2 Hodge structure, where the algebraic part 
 is given by the \textsl{extended N\'eron--Severi lattice}
	\[\widetilde{H}^{1,1}(S, \bbZ)\coloneqq H^0(S,\bbZ)\oplus H^{1,1}(S, \bbZ) \oplus H^4(S, \bbZ)\]
 and the polarization is induced by the intersection form described above.
	
	Finally, we define $\NS(X)$ as the Néron--Severi lattice and $\Tr(X)$ as the transcendental lattice of $X$, where $\Tr(X) \simeq \NS(X)^{\perp}$.

  \subsection*{Acknowledgements}
 Our gratitude goes to the anonymous referees who meticulously reviewed our paper and shared detailed and valuable input that significantly enhanced the quality of the exposition.
This article is indebted to the work of our predecessors in this area, especially to the results of Arend Bayer, Emanuele Macr\`i, Eyal Markman, and Giovanni Mongardi. We are especially grateful to Giovanni Mongardi for his invaluable input and ideas in initiating this project. We are thankful for numerous helpful conversations with Nick Addington, Pietro Beri, Marcello Bernardara, Luca Giovenzana, Klaus Hulek, Daniel Huybrechts, Giacomo Mezzedimi and \'Angel R\'ios.  We thank them for their inputs. 

The third named author was visiting the institutes of mathematics in Bonn, Hannover, and Toulouse, while working on this paper, and would like to thank these institutes for their excellent hospitality and stimulating atmosphere.
\subsection*{Funding}
The first named author was supported by the Hausdorff Center of Mathematics, Bonn under Germany’s Excellence Strategy [(DFG) - EXC-2047/1 - 390685813]; the second named author was supported by ERC Synergy Grant HyperK, agreement [ID 854361]; the third named author gratefully acknowledges support through a Riemann Fellowship at the Riemann Center for Geometry and Physics, Hannover; and the Laboratoire Ypatia des Sciences Math\'ematiques LYSM CNRS-INdAM International Research Laboratory.


 	\section{Cohomological action of symplectic birational maps}\label{SectionCohActionSymplecticBirMaps}

	The goal of this section is to prove Theorem \ref{MainThmSympBirImpliesModuli}.
The proof, that we give at the end of the section, is a combination of several results. First, we need a characterization of the birational models of $X$ via the lattice $H^2(X,\bbZ)$. In \cite[\S 9]{Mar11}, Markman describes $\widetilde{\Lambda}$ as an overlattice of $H^2(X,\hz)$ and an extension of the Hodge structure on $H^2(X,\hz)$ by setting $v\in \MukaiLat^{1,1}_{\bbZ}$. Furthermore, he shows  the following properties:

\begin{citedthm}[{(see \cite[Thm.\ 3]{Add16})}]\label{ThmEmbedH2toK3lattice} Let $X$ be a HK of K$3^{[n]}$--type, $n \geq 2$.
		\begin{enumerate}
			\item The orthogonal $H^2(X,\hz)^\perp \subset \widetilde{\Lambda}$ is generated by a primitive vector $v$ of square $2n-2$.
			\item If $X$ is a moduli space of sheaves on a K3 surface $S$ with Mukai vector $v \in \widetilde{H}(S,\hz)$, then the extension $H^2(X,\hz)\subset \widetilde{\Lambda}$ is naturally identified with $v^\perp \subset \widetilde{H}(S,\Z)$ (see also (\ref{MukaiIsoH2}) in \S \ref{SectionRemindersStabCond}).
			\item A HK manifold $Y$ is birational to $X$ if and only if there is a Hodge isometry $\widetilde{\Lambda} \function \widetilde{\Lambda}$ sending $H^2(X,\hz)$ isomorphically to $H^2(Y,\hz)$.
		\end{enumerate}
	\end{citedthm}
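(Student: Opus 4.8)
The plan is to reduce the statement to lattice theory (Nikulin), Yoshioka's computation of the second cohomology of moduli spaces of sheaves, and the Hodge-theoretic global Torelli theorem for hyperk\"ahler manifolds. For (1) I would first exhibit one primitive embedding and then invoke Nikulin's uniqueness results. Writing $\MukaiLat = U^{\oplus 4}\oplus E_8(-1)^{\oplus 2}$ with $\{e,f\}$ a standard basis of the first copy of $U$, the vector $w\coloneqq e+(n-1)f$ is primitive with $w^2 = 2n-2$, and a direct computation gives $\langle w\rangle^{\perp}\simeq U^{\oplus 3}\oplus E_8(-1)^{\oplus 2}\oplus\langle-2(n-1)\rangle = \LambdaKn\simeq H^2(X,\bbZ)$. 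For uniqueness, the orthogonal complement of any primitive embedding $H^2(X,\bbZ)\hookrightarrow\MukaiLat$ has signature $(1,0)$ and, since $\MukaiLat$ is unimodular, discriminant form opposite to that of $\LambdaKn$; the only even positive definite rank one lattice with this discriminant form is $\langle 2n-2\rangle$. As $A_X$ is cyclic and $\ho(H^2(X,\bbZ))\to\ho(A_X,\bar q_X)$ is surjective by \cite[Prop.\ 1.14.2]{Nik79}, Nikulin's classification of primitive embeddings into a unimodular lattice shows the embedding is unique up to $\ho(\MukaiLat)$, which is (1).

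For (2), suppose $X = M_H(S,v)$ for a primitive Mukai vector $v$ with $v^2 = 2n-2$. Yoshioka's isometry (building on work of Mukai and O'Grady) is a Hodge isometry $\theta_v\colon v^{\perp}\to H^2(M_H(S,v),\bbZ)$, where the orthogonal complement is taken inside $\widetilde H(S,\bbZ)$; identifying $\widetilde H(S,\bbZ)\simeq\MukaiLat$ as lattices, the inverse of $\theta_v$ realizes $H^2(X,\bbZ)$ as the sublattice $v^{\perp}\subset\MukaiLat$, which is by construction the Mukai isometry appearing in the statement, and whose orthogonal complement is $\langle v\rangle$, of square $2n-2$, in accordance with (1).

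For (3): if $X\rationalmap Y$ is birational, \cite[Lem.\ 2.6]{HuyCompactHK} provides an induced Hodge isometry $H^2(Y,\bbZ)\to H^2(X,\bbZ)$ lying in $\Mon(Y)$, hence acting as $\pm\Id$ on discriminant groups by \cite[Lem.\ 4.2]{Mar10}; I would extend it across the rank one orthogonal complements furnished by (1) by the matching $\pm\Id$, obtaining a well-defined isometry of $\MukaiLat$ via Nikulin's gluing criterion, automatically Hodge because $H^2(X,\bbZ)^{\perp}\subset\MukaiLat$ is of type $(1,1)$. Conversely, a Hodge isometry $g\colon\MukaiLat\to\MukaiLat$ with $g(H^2(X,\bbZ)) = H^2(Y,\bbZ)$ restricts to a Hodge isometry $\bar g\colon H^2(X,\bbZ)\to H^2(Y,\bbZ)$; since $g$ carries the primitive generator of $H^2(X,\bbZ)^{\perp}$ to $\pm$ the primitive generator of $H^2(Y,\bbZ)^{\perp}$, the map $\bar g$ acts as $\pm\Id$ on discriminant groups and is therefore a parallel transport operator by \cite[Lem.\ 4.2]{Mar10}; the global Torelli theorem (\cite{VerbitskyMappingClassGrpGlobalTorelli}, \cite[Thm.\ 1.3]{Mar11}) then yields a birational map $Y\rationalmap X$.

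The delicate point is (3): it rests on Markman's identification of $\Mon(X)$ for K$3^{[n]}$-type with the isometries of $H^2(X,\bbZ)$ acting as $\pm\Id$ on $A_X$, on the global Torelli theorem, and on Nikulin's criteria for when an isometry of $H^2(X,\bbZ)$ extends across the primitive embedding $H^2(X,\bbZ)\subset\MukaiLat$; one must in particular keep track of the sign on the rank one complement so that the glued isometry is simultaneously well defined and a Hodge isometry.
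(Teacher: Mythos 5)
The paper does not prove this statement: it is reproduced as a cited theorem, attributed to Markman \cite[\S 9]{Mar11} via \cite[Thm.\ 3]{Add16}, so there is no in-paper argument to measure yours against. Taken on its own, your reconstruction follows the standard route from the literature and is essentially sound: part (1) is indeed the Nikulin computation you describe (any \emph{primitive} embedding of $\LambdaKn$ into the unimodular $\MukaiLat$ has a rank-one positive definite complement whose discriminant form is opposite to that of $\LambdaKn$, forcing $\langle 2n-2\rangle$, and surjectivity of $\ho(\LambdaKn)\to \ho(A_{\LambdaKn})$ gives uniqueness of the embedding); part (2) is Yoshioka's isometry by construction; part (3) combines Markman's description of $\Mon(X)$ with the Hodge-theoretic Torelli theorem, and your gluing of $\bar g$ with $\varepsilon\Id$ on the rank-one complements is exactly the mechanism of \cref{Prop:extension of g on the Mukai lattice} in the paper, applied with $X\neq Y$.

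The one genuine (though easily repaired) gap is in the converse direction of (3). Monodromy, and more generally parallel transport, operators preserve the orientation of the positive cone, so the correct form of \cite[Lem.\ 4.2]{Mar10} characterizes $\Mon(X)$ inside the \emph{orientation-preserving} isometries acting as $\pm\Id$ on $A_X$. The restriction $\bar g\colon H^2(X,\bbZ)\to H^2(Y,\bbZ)$ of an arbitrary Hodge isometry of $\MukaiLat$ need not preserve this orientation, so it is not automatically a parallel transport operator and you cannot yet invoke the Torelli theorem. The fix is one line: replace $g$ by $-g$, which is still a Hodge isometry of $\MukaiLat$ carrying $H^2(X,\bbZ)$ onto $H^2(Y,\bbZ)$ and whose restriction is orientation-preserving (it still acts as $\pm\Id$ on discriminant groups, with the opposite sign). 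Two smaller points to record: in (1) your computation pins down $v^2=2n-2$ only because Markman's embedding is primitive, which you should say explicitly; and in the forward direction of (3) the isometry induced by a birational map is a parallel transport operator from $Y$ to $X$, not literally an element of $\Mon(Y)$ --- the statement that it acts as $\pm\Id$ on discriminant groups presupposes the canonical identification of $A_X$ with $A_Y$ furnished by that parallel transport.
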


The action of a map $g\in \Bir(X)$ extends to a Hodge isometry on $\MukaiLat$. 
The following result is well-known. For the sake of completeness, we include it here.

	\begin{lemma} \label{Prop:extension of g on the Mukai lattice} Let $g\in \Bir(X)$. If $g^*_{|_{A_X}}= \varepsilon \Id$ ($\varepsilon=1$ or $-1$), there exists a unique extension of $g^*$ on $\MukaiLat$ satisfying $g^*v=\varepsilon v$.
	\end{lemma}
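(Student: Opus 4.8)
The statement is about extending an isometry of $H^2(X,\bbZ)$ to the overlattice $\MukaiLat$, and the extra datum we need is precisely the image of $v$, the generator of $H^2(X,\bbZ)^\perp$. First I would recall the setup from \cref{ThmEmbedH2toK3lattice}: since $v$ is primitive of square $2n-2$, the orthogonal complement $H^2(X,\bbZ) = v^\perp \subset \MukaiLat$ is a finite-index sublattice, and $\MukaiLat$ sits between $L:=v^\perp \oplus \bbZ v$ and its dual; the glue is controlled by the discriminant groups, with $A_{v^\perp} \cong A_{\bbZ v} \cong \bbZ/(2n-2)$ matching up to sign. The point is that $\MukaiLat/L$ is a cyclic group generated by a glue vector of the form $\tfrac{1}{2n-2}(w + v)$ (up to a choice of representative $w \in v^\perp$), i.e.\ the graph of an anti-isometry $A_{v^\perp} \xrightarrow{\sim} A_{\bbZ v}$.

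\smallskip

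The construction of the extension is then forced. I would set $\tilde g := g^* \oplus (\varepsilon\,\Id)$ on the sublattice $L = v^\perp \oplus \bbZ v \subset \MukaiLat$; this is an isometry of $L$ by construction. To check it extends to an isometry of $\MukaiLat$, I need $\tilde g$ to preserve the subgroup $\MukaiLat/L \subset A_L = A_{v^\perp}\oplus A_{\bbZ v}$. On $A_{v^\perp}$, $g^*$ acts as $\varepsilon\,\Id$ by hypothesis; on $A_{\bbZ v}$, $\varepsilon\,\Id$ acts as $\varepsilon\,\Id$. So $\bar{\tilde g}$ acts on $A_L$ as $\varepsilon\,\Id$, hence it preserves \emph{every} subgroup of $A_L$, in particular the glue subgroup. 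Therefore $\tilde g$ lifts (uniquely, since $L$ has finite index) to an isometry of $\MukaiLat$; one then checks it is a Hodge isometry because $g^*$ is and because $v \in \MukaiLat^{1,1}$ is sent to $\varepsilon v \in \MukaiLat^{1,1}$. This gives existence.

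\smallskip

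For uniqueness: any extension $\tilde g$ of $g^*$ with $\tilde g v = \varepsilon v$ is determined on the finite-index sublattice $v^\perp \oplus \bbZ v$, hence on all of $\MukaiLat \otimes \bbQ$, hence on $\MukaiLat$. (Without the normalization $\tilde g v = \varepsilon v$ there would be exactly two extensions, differing by $-\Id$ on $\bbZ v$, which is why that condition is imposed.) I would also note that $\varepsilon v$ is indeed the only possible image of $v$ under \emph{any} isometry extending $g^*$: such an isometry must send $v^\perp$ to $v^\perp$, hence $v$ to $\pm v$ since $v$ is a primitive generator of the rank-one lattice $(v^\perp)^\perp$; and the sign is pinned down to $\varepsilon$ by compatibility with the action on the glue group $A_L$, because $g^*$ already acts as $\varepsilon\,\Id$ on $A_{v^\perp}$ and the glue identifies $A_{v^\perp}$ with $A_{\bbZ v}$.

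\smallskip

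The only mildly delicate point — really the heart of the matter — is the lattice-theoretic lifting criterion: an isometry of a finite-index sublattice $L\subset \Lambda$ extends to $\Lambda$ iff the induced map on $A_L$ preserves the isotropic subgroup $H = \Lambda/L$. This is Nikulin's standard correspondence (\cite[Prop.\ 1.4.1, Cor.\ 1.5.2]{Nik79}), and in our situation it is trivially satisfied because the induced action on $A_L$ is scalar. So there is no real obstacle; the proof is a direct application of gluing theory, and the lemma is indeed, as the authors say, well known.
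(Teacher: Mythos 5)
Your proof is correct and follows essentially the same route as the paper: both define the extension on $H^2(X,\bbZ)\oplus\bbZ v$ by $v\mapsto \varepsilon v$ and then use the hypothesis $g^*|_{A_X}=\varepsilon\,\Id$ to see that the finite-index overlattice $\MukaiLat$ is preserved — you phrase this via Nikulin's glue-group criterion, while the paper checks it by hand on dual vectors, which is the same computation. (Only your parenthetical side remarks about the sign of $v$ being forced are slightly imprecise in the edge case $n=2$, where $A_X\cong\bbZ/2$ and both signs glue; this does not affect the argument.)
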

	
	\begin{proof}

		 Define an extension $g^*$ on $H^2(X,\matZ)\oplus \langle v \rangle$ by setting $v\mapsto \varepsilon v$. Since $H^2(X,\matZ)_\matQ\oplus \langle v \rangle_\matQ=\MukaiLat_\matQ$, it suffices to prove that the extension $g^*_\matQ \in \ho(\MukaiLat_\matQ)$ preserves $\MukaiLat$. Any element $x\in \MukaiLat =\MukaiLat^\vee \subset H^2(X, \bbZ)^\vee\oplus  \langle v\rangle^\vee$ decomposes as $x=x_1+x_2$, with $x_1 \in H^2(X, \bbZ)^\vee$ and $x_2\in \langle v\rangle^\vee$. 
		 Note that $g^*_\matQ(x_1)=\varepsilon x_1+y$ with $y\in H^2(X,\matZ)$, therefore $g_{\bbQ}^*(x)=\varepsilon (x_1+ x_2)+y\in \widetilde{\Lambda}$.
	\end{proof}

	Let $\widetilde{\Lambda}^{1,1}_{\bbZ}$ be the algebraic part of $\widetilde{\Lambda}$ (see \S \ref{secpreliminaries}). The next lemma demonstrates that the property of being birational to a moduli space of (twisted) sheaves on a K3 may be understood lattice-theoretically from the Hodge structure of $\widetilde{\Lambda}$. 
	
	The non-twisted case was proved by Addington in \cite[Prop.\ 4]{Add16} and later generalized for twisted K3 surfaces by Huybrechts in \cite[Lem.\ 2.6]{Huy17}. 

	\begin{theorem}\label{lemmaUGiveModuliOnK3} Let $X$ be a HK manifold of K$3^{[n]}$-type, $n\geq 2$. Then the following are equivalent:
		
		\begin{enumerate}
			\item $\widetilde{\Lambda}^{1,1}_{\bbZ}$ contains a copy of  $U$ (resp.\ $U(k)$).
			\item $X$ is birational to a moduli space of stable sheaves (resp.\ twisted sheaves) on a K3 surface $S$. 
		\end{enumerate}

	\end{theorem}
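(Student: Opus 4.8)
The strategy is to reduce this to Addington's and Huybrechts's known criteria by recognizing the presence of a hyperbolic plane in $\widetilde{\Lambda}^{1,1}$ as exactly the obstruction to realizing $X$ as a (twisted) moduli space. For the implication $(2)\Rightarrow(1)$: if $X$ is birational to $M_H(S,v)$ for some K3 surface $S$ (resp.\ twisted K3 $(S,\alpha)$), then by \cref{ThmEmbedH2toK3lattice}(3) there is a Hodge isometry of $\widetilde{\Lambda}$ carrying $H^2(X,\bbZ)$ onto $H^2(M,\bbZ)$, hence carrying $\widetilde{\Lambda}^{1,1}$ (the algebraic part, which is intrinsic to the Hodge structure) to the corresponding algebraic part for $M$. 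Now for $M = M_H(S,v)$, part (2) of \cref{ThmEmbedH2toK3lattice} identifies the overlattice $\widetilde{\Lambda} \supset H^2(M,\bbZ)$ with $\widetilde{H}(S,\bbZ)$ via the Mukai isometry, under which the algebraic part becomes the extended Néron–Severi lattice $\widetilde{H}^{1,1}(S,\bbZ) \simeq H^0(S,\bbZ)\oplus \NS(S) \oplus H^4(S,\bbZ)$ (resp.\ its twisted analogue). This visibly contains the hyperbolic plane $U = H^0 \oplus H^4$ in the untwisted case; in the twisted case, where the $B$-field shifts the embedding, one gets a copy of $U(k)$ with $k$ the order of $\alpha$ (or a divisor thereof) — this is exactly Huybrechts's observation in \cite[Lem.\ 2.6]{Huy17}.

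For the implication $(1)\Rightarrow(2)$: suppose $\widetilde{\Lambda}^{1,1}$ contains a copy of $U$ (resp.\ $U(k)$). The idea is to produce a K3 surface whose (twisted) Mukai lattice is Hodge-isometric to $\widetilde{\Lambda}$. Since $U$ is unimodular, a copy of $U \hookrightarrow \widetilde{\Lambda}$ splits off a direct summand: $\widetilde{\Lambda} \simeq U \oplus U^{\perp}$, and the orthogonal complement, with its induced weight-$2$ Hodge structure, is a lattice of signature $(2,19)$ that one identifies (abstractly) with $H^2(S,\bbZ)$ for a K3 surface $S$ — concretely $U^{\perp} \simeq U^{\oplus 3}\oplus E_8(-1)^{\oplus 2}$, the K3 lattice — and the surjectivity of the period map produces an actual K3 surface $S$ with this Hodge structure. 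Then $\widetilde{\Lambda} \simeq U \oplus H^2(S,\bbZ) \simeq \widetilde{H}(S,\bbZ)$ as Hodge structures. Under this Hodge isometry, $H^2(X,\bbZ) \hookrightarrow \widetilde{\Lambda}$ becomes the orthogonal complement of a primitive $(1,1)$-vector $v$ of square $2n-2$ inside $\widetilde{H}(S,\bbZ)$ (\cref{ThmEmbedH2toK3lattice}(1)); after acting by a Hodge isometry (and using that the moduli space of sheaves is nonempty and a HK manifold since $v^2 = 2n-2 \geq -2$, by Yoshioka), $H^2(X,\bbZ)$ is Hodge-isometric to $H^2(M_H(S,v),\bbZ)$ sitting inside $\widetilde{H}(S,\bbZ)$ compatibly, so \cref{ThmEmbedH2toK3lattice}(3) gives that $X$ is birational to $M_H(S,v)$. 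In the $U(k)$ case, the same argument with a twisted K3 surface: a copy of $U(k)$ does not split off, but following Huybrechts one extracts a B-field $B \in \NS(S)\otimes\bbQ$ (equivalently a Brauer class $\alpha$) so that $\widetilde{\Lambda}$ becomes the twisted Mukai lattice $\widetilde{H}(S,\alpha,\bbZ)$, and then invokes the twisted analogue of the moduli-space construction and Torelli.

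The main obstacle is the bookkeeping in the twisted case: making precise how a copy of $U(k)$ inside $\widetilde{\Lambda}^{1,1}$ (which, unlike $U$, is not a direct summand when $k>1$) determines a $B$-field on the resulting K3 surface and hence a Brauer class of the correct order, and checking that the moduli space of $\alpha$-twisted sheaves with the relevant Mukai vector is indeed birational to $X$. This is essentially the content of \cite[Lem.\ 2.6]{Huy17} (building on \cite{Add16}), so the cleanest route is to cite those results for the heart of each direction and only spell out the reduction: namely, that "contains $U$" (resp.\ "contains $U(k)$") is equivalent to the hypothesis under which Addington (resp.\ Huybrechts) produces the (twisted) K3, together with the birational Torelli statement \cref{ThmEmbedH2toK3lattice}(3) to pass from "Hodge-isometric overlattices" to "birational." I would therefore present the proof as: (i) recall from \cite{Add16,Huy17} that $X$ is birational to a (twisted) moduli space iff $\widetilde{\Lambda}^{1,1}$ admits a suitable isotropic vector / splitting; (ii) observe that the existence of such a vector is equivalent to the existence of an embedded $U$ (resp.\ $U(k)$), the forward direction being immediate from isotropy and the reverse from the primitivity/divisibility of the isotropic generators; (iii) conclude.
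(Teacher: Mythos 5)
The paper does not actually prove this statement: it records it as a known result, citing Addington \cite[Prop.\ 4]{Add16} for the untwisted case and Huybrechts \cite[Lem.\ 2.6]{Huy17} for the twisted generalization, and your proposal in the end defers to the same two sources for the substantive content, so you are on the same track as the authors. Your expanded sketch of the untwisted direction is essentially the standard argument (split off the unimodular $U$, recognize $U^{\perp}\simeq U^{\oplus 3}\oplus E_8(-1)^{\oplus 2}$ as a K3 Hodge structure via surjectivity of the period map, realize $v$ as a Mukai vector, and conclude with \cref{ThmEmbedH2toK3lattice}(3)); the only slip is that $U^{\perp}\subset\widetilde{\Lambda}$ has signature $(3,19)$, not $(2,19)$, consistent with the rank-$22$ K3 lattice you correctly name in the same sentence.
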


		Now let $g\in \Bir(X)$ be a non-trivial symplectic finite order map. Denote by $H^2(X,\hz)^{g^*}$ (resp.\ $\MukaiLat^{g^*}$) the invariant sublattice of $H^2(X,\hz)$ (resp.\ $\MukaiLat$). Similarly let $S_{g^*}(X)=(H^2(X,\hz)^{g^*})^\perp \subset H^2(X,\hz)$ (resp.\ $S_{g^*}(\MukaiLat)=(\MukaiLat^{g^*})^\perp \subset \MukaiLat$)  be the co-invariant sublattice of $H^2(X,\hz)$ (resp.\ $\MukaiLat$ ). The following is well-known.
		
		\begin{citedlem} \label{LemmaDescriptionCoinv}
		    $S_{g^*}(X)$ is a negative definite sublattice of $ \NS(X)$.
		\end{citedlem}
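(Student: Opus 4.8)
The claim has two parts: $S_{g^*}(X)$ is negative definite, and it is contained in $\NS(X)$. I would deal with the Hodge-theoretic part first, then the signature.

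First I would show $S_{g^*}(X) \subset \NS(X)$, equivalently $\Tr(X) = \NS(X)^\perp \subset H^2(X,\bbZ)^{g^*}$. Since $g$ is a \emph{symplectic} birational map, $g^*$ acts trivially on $H^{2,0}(X) = \bbC\sigma$, hence also on $\overline{H^{2,0}(X)} = H^{0,2}(X)$. The transcendental lattice $\Tr(X)$ is, by definition, the smallest primitive sub-Hodge-structure of $H^2(X,\bbZ)$ whose complexification contains $\sigma$; concretely $\Tr(X)_\bbC$ is spanned by $H^{2,0}$, $H^{0,2}$ and a piece of $H^{1,1}$ that is forced in by integrality and irreducibility of the Hodge structure. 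Because $g^*$ is a Hodge isometry fixing $\sigma$ and $\bar\sigma$, and $\Tr(X)$ is irreducible as a rational Hodge structure, Schur's lemma forces $g^*$ to act on $\Tr(X)_\bbQ$ as multiplication by a scalar; being an isometry of finite order it is $\pm\Id$, and since it fixes the nonzero vector $\sigma$ it must be $\Id$ on $\Tr(X)$. Therefore $\Tr(X) \subseteq H^2(X,\bbZ)^{g^*}$, so taking orthogonal complements gives $S_{g^*}(X) = (H^2(X,\bbZ)^{g^*})^\perp \subseteq \Tr(X)^\perp = \NS(X)$.

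Next, the signature. The lattice $H^2(X,\bbZ)$ has signature $(3,20)$, and the positive-definite $3$-space in $H^2(X,\bbR)$ is spanned by $\mathrm{Re}\,\sigma$, $\mathrm{Im}\,\sigma$, and a Kähler (or, in the birational setting, a movable big) class $\kappa$. Since $g$ is symplectic of finite order, $g^*$ fixes $\mathrm{Re}\,\sigma$ and $\mathrm{Im}\,\sigma$; averaging a Kähler class over the finite cyclic group $\langle g^*\rangle$ (using that $g^*$ preserves some birational Kähler chamber, or simply that a finite-order birational map preserves the movable cone and one can average inside the positive cone) produces a $g^*$-invariant class $\kappa_0$ of positive square. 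Hence $H^2(X,\bbR)^{g^*}$ contains a positive $3$-dimensional subspace, namely $\langle \mathrm{Re}\,\sigma, \mathrm{Im}\,\sigma, \kappa_0\rangle$, and in particular $H^2(X,\bbZ)^{g^*}$ already carries the full positive part of the signature. Its orthogonal complement $S_{g^*}(X)$ is therefore contained in the orthogonal complement of a maximal positive subspace, which is negative definite. Combining with the previous paragraph, $S_{g^*}(X)$ is a negative definite sublattice of $\NS(X)$.

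The main obstacle is the signature/positivity step: one must be careful that for a genuinely birational (non-biregular) $g$ there need not be a $g^*$-invariant Kähler class, since $g^*$ permutes the Kähler chambers inside the positive cone. The clean fix is to work with the movable cone: a finite-order birational self-map preserves $\Mov(X)$ (indeed $g^* \in \MonB(X)$), this cone is open in $\mathcal{C}_X \cap \NS(X)_\bbR$ and contains classes of positive square, and averaging over $\langle g^*\rangle$ keeps us inside it (the cone is convex), yielding the required invariant positive class $\kappa_0$ — in fact one lands in $\NS(X)$ directly, which also re-proves $S_{g^*}(X)\subset\NS(X)$ from the positive side. All other ingredients (Huybrechts' compatibility of $q_X$ with birational maps, injectivity of $\Bir(X)\to \ho(H^2,q_X)$, finiteness of the order) are quoted from the introduction.
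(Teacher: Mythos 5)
Your proof is correct, and the negative-definiteness half is essentially the paper's argument verbatim (the positive $3$-space $\langle \mathrm{Re}\,\sigma,\mathrm{Im}\,\sigma,\kappa_0\rangle$ inside the invariants, with the same care about averaging in the positive cone rather than the K\"ahler cone — the paper is in fact terser than you on this point). Where you diverge is the containment $S_{g^*}(X)\subset\NS(X)$: you deduce it from irreducibility of $\Tr(X)_{\bbQ}$ as a rational Hodge structure, whereas the paper argues directly on a coinvariant class $x$: the average $x'=\sum_{i}(g^i)^*x$ is both invariant and coinvariant, hence zero by nondegeneracy of the invariant lattice, and since $g^*\sigma=\sigma$ gives $((g^i)^*x,\sigma)=(x,\sigma)$, one gets $0=(x',\sigma)=\ord(g)\,(x,\sigma)$, so $x\perp\sigma$ and $x\in\NS(X)$ by Lefschetz $(1,1)$. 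The paper's route is more elementary and self-contained; yours proves the stronger (and also standard) statement that $g^*$ restricted to $\Tr(X)$ is the identity. One wrinkle in your version: Schur's lemma does not literally say that $g^*$ acts on $\Tr(X)_{\bbQ}$ by a scalar — it says the endomorphism algebra of the irreducible Hodge structure is a division algebra. The clean repair, which avoids Zarhin-type input entirely, is to observe that $\ker(g^*-\Id)\cap\Tr(X)_{\bbQ}$ is a sub-Hodge structure whose complexification contains $\sigma$, hence is nonzero, hence is all of $\Tr(X)_{\bbQ}$ by irreducibility (which itself uses projectivity of $X$, an ambient hypothesis of the paper). With that adjustment your argument is complete and equivalent in strength to the paper's.
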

		\begin{proof}
		   Pick $x\in S_{g^*}(X)$. Let $\sigma\in H^{2,0}(X)$ denote a generator. Since $H^2(X,\matZ)^{g^*}$ is non-degenerate, $x'=\sum_{i=0}^{\ord(g)-1} (g^i)^*x$ is $g^*$-invariant and co-invariant, therefore $x'=0$. Since $(g^i(x),\sigma)=(x,\sigma)$ for all $i$, we obtain that $0=(x',\sigma)=\ord(g)(x,\sigma)$, therefore $(x,\sigma)=0$ and hence $S_{g^*}(X)\subset \NS(X)$. Recall that $\sign(\NS(X)) = (1, \star)$.
		
		  Let $\alpha \in H^{1,1}(X, \bbZ)$ be a K\"ahler class of $X$. The class $\alpha'=\sum_{i=0}^{\ord(g)-1} (g^i)^*\alpha$ is in $H^2(X,\hz)^{g^*}\cap \NS(X)$. 
		 As $(\alpha')^2>0$, we obtain $\sign(S_{g^*}(X))=(0,23-\rk H^2(X,\hz)^{g^*}).$
		\end{proof}
  
		More explicitly, the complex vector space $H^2(X,\bbC)^{g^*}$ admits a 3-dimensional positive subspace spanned by $\alpha', \sigma$ and $\overline{\sigma}$.
        As an upshot, we obtain that when $g^*|_{A_X}=-\Id$, the lattice $S_{g^*}(\widetilde{\Lambda})$ has signature $(1,23-\rk H^2(X,\hz)^{g^*})$, 
        i.e. it is a \textit{hyperbolic} lattice. On the other hand, when $g^*|_{A_X} = \Id$, we obtain $S_{g^*}(\widetilde{\Lambda}) \simeq S_{g^*}(X)$ and hence is again negative definite.

		\begin{remark}\label{RmkInvolution2ElemDiscGrp}
		   For a finite order group $G\subset \ho(L)$ of isometries of a lattice $L$, the quotient group $L/(S_G(L)\oplus L^G)$ is $|G|$-torsion. Indeed, for any element $x\in L$, we can write $|G|\cdot x= \sum_{g\in G}g(x) + \sum_{g\in G}(x-g(x))$. If $L$ is unimodular, we know moreover that $L/(S_G(L)\oplus L^G) \simeq A_{S_G(L)} \simeq A_{L^G}$. As a consequence, when the birational map $g\in \Bir(X)$ has order $2$, we obtain that
		   $$ A_{S_{g^*}(\MukaiLat)}\simeq (\matZ/2\matZ)^{\oplus \ell} $$
		   for some positive integer $\ell$. A lattice with such a discriminant group is $2$-elementary, and $2$-elementary lattices were classified by Nikulin in \cite[Thm.\ 4.3.2]{Nik81}.
		\end{remark}

	\begin{lemma}\label{LemmaIsoClassGiveU}
	An even lattice $L$ contains a non-trivial isotropic class $e\in L$ with $\dive(e)=m$ if and only if $U(m^2)\subset L$.
	\end{lemma}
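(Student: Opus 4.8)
The plan is to construct the copy of $U(m^2)$ inside $L$ explicitly, starting from the given isotropic vector $e$ with $\dv(e)=m$. First I would exploit the definition of divisibility: since $(\_,e)\colon L\to\bbZ$ has image $m\bbZ$, there exists $f_0\in L$ with $(e,f_0)=m$. The element $f_0$ will not quite span a hyperbolic plane with $e$ over the integers at the right scale, so the idea is to correct it. Set $a=(f_0,f_0)\in\bbZ$; since $L$ is even, $a$ is even. I would then replace $f_0$ by $f\coloneqq f_0-\tfrac{a}{2m}e$ and check this lands in $L$. The subtlety is that $\tfrac{a}{2m}$ may not be an integer, so $f$ is a priori only a rational vector; one needs $m\mid \tfrac{a}{2}$, equivalently $2m\mid a$. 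This is where the divisibility hypothesis is used a second time: pairing $f_0$ against $e$ repeatedly, or rather using that $e$ is primitive with $\dv(e)=m$, forces a congruence on $a=(f_0,f_0)$.

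More precisely, the key observation I would make is: write $e=m e'$ in $L^\vee$ where $e'$ is a primitive vector of the dual lattice (this is the standard reformulation of $\dv(e)=m$, using that $\tfrac1m e\in L^\vee$ because $(\_,e)$ takes values in $m\bbZ$). Then $(e,f_0)=m$ gives $(e',f_0)=1$. Since $L$ is even, $L^\vee$ carries a $\tfrac12\bbZ$-valued quadratic form, and the sublattice $\langle e', f_0\rangle$ of $L^\vee$ is a rank-two lattice containing the isotropic primitive vector $e'$ with $(e',f_0)=1$; a direct Gram-matrix manipulation over $\bbZ$ shows $\langle e',f_0\rangle$ contains a vector $f$ with $(e',f)=1$ and $(f,f)=0$, namely $f=f_0-\tfrac{(f_0,f_0)}{2}e'$ — and here $(f_0,f_0)$ being an integer makes $f\in L^\vee\cap\langle e',f_0\rangle_\bbQ$, but to get $f\in L$ we need $mf=mf_0-\tfrac{(f_0,f_0)}{2}e\in L$, i.e. $\tfrac{(f_0,f_0)}{2}e\in L$; but $e\in L$, so $\tfrac{(f_0,f_0)}{2}\in\bbZ$ suffices, which holds because $L$ is even. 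Wait — that shows $mf\in L$, not $f\in L$. So the honest statement is: $e$ and $\widetilde f\coloneqq mf_0-\tfrac{(f_0,f_0)}{2}e$ both lie in $L$, $(e,e)=0$, $(e,\widetilde f)=m^2$, and $(\widetilde f,\widetilde f)=m^2(f_0,f_0)-2\cdot\tfrac{m(f_0,f_0)}{2}\cdot m + 0 = 0$. Hence the Gram matrix of $\langle e,\widetilde f\rangle$ is $\left(\begin{smallmatrix}0&m^2\\ m^2&0\end{smallmatrix}\right)$, which is exactly $U(m^2)$.

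So the steps in order are: (1) from $\dv(e)=m$ extract $f_0\in L$ with $(e,f_0)=m$; (2) set $\widetilde f=mf_0-\tfrac{(f_0,f_0)}{2}e$ and observe $\widetilde f\in L$ using that $L$ is even and $e\in L$; (3) compute the Gram matrix of the span $\langle e,\widetilde f\rangle$ and identify it with $U(m^2)$; (4) note $\langle e,\widetilde f\rangle$ is primitively embedded — or at least that it contains $U(m^2)$, which is all that is claimed — so $U(m^2)\subset L$ as a sublattice. The main obstacle, such as it is, is bookkeeping in step (2): making sure the coefficient $\tfrac{(f_0,f_0)}{2}$ is a genuine integer (this is the evenness of $L$) and that multiplying the naive hyperbolic partner $f_0-\tfrac{(f_0,f_0)}{2m}e$ by $m$ is the right normalization to both stay in $L$ and produce square $0$. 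No deformation theory or Hodge theory is needed here; it is purely a lattice computation, and the statement does not even ask for primitivity of the embedding, so one can stop once the Gram matrix is verified.
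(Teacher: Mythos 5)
Your construction is correct and is essentially identical to the paper's proof: writing $(f_0,f_0)=2k$, your vector $\widetilde f=mf_0-\tfrac{(f_0,f_0)}{2}e$ is exactly the paper's $f=-ke+mf'$, and the Gram-matrix computation is the same. The detour through the dual lattice is unnecessary but harmless.
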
 

	\begin{proof}
	Pick an element $f'$ in $ L$ such that $(e,f')=m$ and $f'^2=2k$ for some $k\in\matZ$. Set $f=-ke+mf'$. It is straightforward to check that $\langle e,f \rangle=U(m^2) \subset L$. Conversely, if $U(m^2)=\langle e,f\rangle\subset L$, then $e$ is isotropic and one of its multiples has divisibility $m$.
	\end{proof}

	\begin{remark}\label{RmkLatticeIndefRk5containsIsoClass}
	   A consequence of a result by Hasse and Minkowski is that any indefinite lattice $L$ of rank at least $5$ must contain an isotropic class (see \cite[Cor.\ IV.3.2]{SerreACourseInArithmetic}) and hence by the lemma mentioned above $L$ contains a copy of $U(m^2)$ for some integer $m$. 
	\end{remark}

Let $N_{24}$ be the \textsl{Leech lattice}, i.e. the unique unimodular negative definite lattice of rank $24$ without $(-2)$-classes. The final ingredient we need is the following result that allows us to see
certain finite order maps in $\Bir(X)$ as elements of the so-called Conway group $Co_0\coloneqq  \ho(N_{24})$. The advantage of doing so is that one can use the classification of $S_G(N_{24})$ for any finite group action $G\subset Co_0$ as in \cite{HohnMason290FixedPt}. The original idea of using $N_{24}$ in this way is due to Gaberdiel, Hohenegger and Volpato \cite[B2]{GHRSymmK3SigmaModels}. We use the following more convenient form, which appears in \cite[Prop.\ 2.4]{GGOVSympRigidOG10}, see \cite[\S 2.2]{HuybrechtsDerivedCatK3ConwayGrp}.

\begin{lemma}\label{LemmaSgAsSubgroupOfConway}
	Let $L$ be a lattice and let $G\subset \ho(L)$ be a subgroup of isometries that acts trivially on the discriminant group $A_L$. Assume that $S_G(L)$ is negative definite and does not contain $(-2)$-classes. If there exists a primitive embedding $S_G(L) \subset \Lambda_{1,25}$ into the unique unimodular lattice of signature $(1,25)$, then $G$ is isomorphic to a subgroup of the Conway group $Co_0$ such that
	$$S_G(L)=S_G(N_{24}).$$
\end{lemma}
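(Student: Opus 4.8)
The plan is to push the $G$-action onto the even unimodular Lorentzian lattice $\Lambda_{1,25}$ and then feed the outcome into Conway's description of $\ho(\Lambda_{1,25})$. The two hypotheses enter one per step: triviality on $A_L$ lets us extend the action, and the absence of $(-2)$-classes forces $G$ into the Conway group. \emph{Step 1: reduce to showing $G$ acts trivially on $A_{S_G(L)}$.} Put $T\coloneqq S_G(L)$ and $N\coloneqq L^G$; both are non-degenerate, $T\oplus N\subseteq L$ has finite index, and by Nikulin's gluing formalism \cite[\S 1.5]{Nik79} the subgroup $H\coloneqq L/(T\oplus N)\subseteq A_T\oplus A_N$ is the (isotropic) graph of an anti-isometry $\gamma\colon H_T\xrightarrow{\sim}H_N$ between subgroups $H_T\subseteq A_T$, $H_N\subseteq A_N$, with $A_L\cong H^{\perp}/H$ (orthogonal taken in $A_T\oplus A_N$). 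Since $G$ fixes $N$ pointwise it acts trivially on $A_N$ and preserves $H$; injectivity of $\gamma$ then forces $g|_{H_T}=\Id$. As the form on $A_N$ is non-degenerate, the projection $H^{\perp}\to A_T$ is onto, so any $a\in A_T$ occurs in some $(a,b)\in H^{\perp}$; triviality of $G$ on $A_L\cong H^{\perp}/H$ and on $A_N$ gives $(ga-a,0)=g(a,b)-(a,b)\in H$, whence $ga=a$ again by injectivity of $\gamma$. Thus $g|_{A_T}=\Id$.

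\emph{Step 2: extend $G$ to $\Lambda_{1,25}$.} Fix the given primitive embedding $T\subseteq\Lambda_{1,25}$. As $\Lambda_{1,25}$ is unimodular, it is a gluing of $T$ and $T^{\perp}$ along an anti-isometry $A_T\xrightarrow{\sim}A_{T^{\perp}}$, and by Step 1 extending each $g^{*}\in\ho(T)$ by the identity on $T^{\perp}$ yields an isometry of $\Lambda_{1,25}$. This realizes $G\hookrightarrow\ho(\Lambda_{1,25})$ with $\Lambda_{1,25}^{G}=T^{\perp}$, hence $S_{G}(\Lambda_{1,25})=T=S_{G}(L)$. Note that $G$ is automatically finite, since it acts faithfully on the negative definite lattice $T$ (an element trivial on $T$ and on $N$ is trivial on $L\otimes\matQ$); and $T^{\perp}$ has signature $(1,25-\rk T)$, so $G$ fixes a class of positive square and lies in $\ho^{+}(\Lambda_{1,25})$.

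\emph{Step 3: descend to $N_{24}$.} Here I would invoke Conway's theorem: $\ho^{+}(\Lambda_{1,25})=\mathcal{W}\rtimes Co_{\infty}$, where $\mathcal{W}$ is generated by the reflections in $(-2)$-classes and $Co_{\infty}\cong N_{24}\rtimes Co_{0}$ is the stabilizer of a fundamental Weyl chamber whose Weyl vector $\rho$ is a primitive isotropic class with $\dive(\rho)=1$ and $\rho^{\perp}/\langle\rho\rangle\cong N_{24}$, the linear part $Co_{0}=\ho(N_{24})$ acting tautologically on this quotient. The crucial point is $G\cap\mathcal{W}=\{1\}$: for nontrivial $w\in\mathcal{W}$ the lattice $S_{\langle w\rangle}(\Lambda_{1,25})$ is nonzero and contains a $(-2)$-class — e.g.\ a root appearing in a minimal reflection factorization of $w$, since those roots span $\im(w-\Id)\otimes\matQ$ and hence lie in $S_{\langle w\rangle}(\Lambda_{1,25})$ — while $S_{\langle w\rangle}(\Lambda_{1,25})\subseteq S_{G}(\Lambda_{1,25})=T$, contradicting the hypothesis on $T$. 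Therefore the projection $Co_{\infty}\to Co_{0}$ restricts to an isomorphism from $G$ onto a finite subgroup of $Co_{0}$ (its kernel $N_{24}$ being torsion-free). After conjugating $G$ into $Co_{\infty}$ it fixes $\rho$, so $T\subseteq\rho^{\perp}$; being negative definite, $T$ meets the radical $\langle\rho\rangle$ trivially and embeds primitively into $\rho^{\perp}/\langle\rho\rangle\cong N_{24}$, and tracking the $G$-action through this quotient identifies $T=S_{G}(L)$ with $S_{G}(N_{24})$.

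I expect the main obstacle to be Step 3, as it rests entirely on the classical but nontrivial input of Conway's structure theorem for $\ho(\Lambda_{1,25})$ and the identification of its fundamental chamber with the Leech lattice; within that step the only place the specific hypotheses intervene is the verification that $G\cap\mathcal{W}=\{1\}$, which is precisely where the absence of $(-2)$-classes is used. Steps 1 and 2 are routine lattice theory, Step 1 requiring only mild care with the gluing. (One may also observe that the rank bound $\rk S_G(L)\le 24$ is not needed as a hypothesis: it follows a posteriori, since $\rho\in\Lambda_{1,25}^{G}$ forces $T\subseteq\rho^{\perp}$, which has a $24$-dimensional definite part.)
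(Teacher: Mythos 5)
The paper does not actually prove this lemma: it is imported verbatim from \cite[Prop.~2.4]{GGOVSympRigidOG10} (see also \cite[\S 2.2]{HuybrechtsDerivedCatK3ConwayGrp}), so your attempt has to be measured against that external argument. Your Steps 1 and 2 are correct and coincide with the standard proof: triviality on $A_L$ descends to triviality on $A_{S_G(L)}$ by Nikulin's gluing, and $G$ then extends to $\Lambda_{1,25}$ by the identity on $S_G(L)^\perp$, with $S_G(\Lambda_{1,25})=S_G(L)=:T$.

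The genuine gap is in Step 3, at ``After conjugating $G$ into $Co_\infty$''. Writing $W$ for the reflection group of $\Lambda_{1,25}$, the statement $G\cap W=\{1\}$ only yields an abstract injection $G\hookrightarrow \ho^+(\Lambda_{1,25})/W\cong Co_\infty$; for a semidirect product $W\rtimes Co_\infty$, a subgroup meeting $W$ trivially need not be conjugate into the complement (there is a nonabelian $H^1(G;W)$ obstruction), and without an actual conjugation you cannot fix $\rho$, place $T$ in $\rho^\perp$, or compare $T$ with anything inside $N_{24}$. This conjugation --- equivalently, that $G$ preserves some Weyl chamber --- is the crux, and the standard argument gets it directly from the no-roots hypothesis, bypassing $G\cap W$ entirely: since $T$ is saturated and contains no $(-2)$-class, no root of $\Lambda_{1,25}$ is orthogonal to $T^\perp_{\matR}=\Lambda_{1,25}^G\otimes\matR$, so every root hyperplane meets the positive cone of $T^\perp_\matR$ in a proper subset; by local finiteness of the wall arrangement there is a $G$-fixed point in the interior of a chamber, and the unique element of $W$ carrying that chamber to the fundamental one conjugates $G$ into $\operatorname{Aut}(C)=Co_\infty$. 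Two subsidiary points also need repair: (i) your proof of $G\cap W=\{1\}$ invokes Carter's lemma, which is a statement about definite forms; over the hyperbolic lattice $\Lambda_{1,25}$, with reflections restricted to $(-2)$-vectors, the minimal reflection length can a priori exceed $\dim\im(w-\Id)$, in which case the roots of a minimal factorization need not lie in $S_{\langle w\rangle}(\Lambda_{1,25})$; (ii) a finite-order element of $Co_\infty\cong N_{24}\rtimes Co_0$ may have a nontrivial translation (Eichler) part, and for such an element the coinvariant lattice in $\Lambda_{1,25}$ contains the isotropic class $\rho$ and is strictly larger than the coinvariant lattice of its image in $Co_0$; one must use the negative-definiteness of $T$ once more to rule out translation parts before concluding $T=S_G(N_{24})$.
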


In order to use this, we need to prove the next property.

\begin{lemma}\label{LemmaNoRootsInSgX}
	Let $g\in \Bir(X)$ be a non-trivial symplectic birational self-map of finite order. Then there are no $(-2)$-classes in $S_{g^*}(X)$.  
\end{lemma}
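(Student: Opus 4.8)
The plan is to argue by contradiction, comparing the position of the hyperplane $\delta^{\perp}$ with the movable cone of $X$. So suppose there is a primitive class $\delta\in S_{f^{*}}(X)$ with $\delta^{2}=-2$; by \cref{LemmaDescriptionCoinv} this forces $\delta\in\NS(X)$.

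The first step is to exhibit an $f^{*}$-invariant class in the interior of the movable cone. Since $f$ is birational, $f^{*}$ preserves $\overline{\Mov}(X)\subset\NS(X)_{\matR}$ (it permutes the nef cones of the birational models of $X$), and hence preserves its interior. Picking any $\omega_{0}$ in the interior of $\overline{\Mov}(X)$, the average $\omega:=\sum_{i=0}^{\ord(f)-1}(f^{i})^{*}\omega_{0}$ is again an interior point, by convexity of $\overline{\Mov}(X)$, and it is fixed by $f^{*}$. Since $\omega\in H^{2}(X,\matR)^{f^{*}}$ and $\delta\in S_{f^{*}}(X)=(H^{2}(X,\matZ)^{f^{*}})^{\perp}$, we get $(\omega,\delta)=0$. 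In particular $\delta^{\perp}$ meets the interior of $\overline{\Mov}(X)$.

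The second, and decisive, step is to rule this out. Since $\delta\in\NS(X)$ and $\delta^{2}=-2$, the reflection $R_{\delta}$ is an integral Hodge monodromy operator which, by Markman's classification of monodromy reflections and stably prime exceptional classes on manifolds of K3$^{[n]}$-type \cite{Mar13}, lies in $W_{\text{Exc}}$. (Equivalently: under the Mukai embedding of \cref{ThmEmbedH2toK3lattice} the class $\delta$ maps to some $w$ with $w^{2}=-2$ and $(v,w)=0$, so it spans a Brill--Noether divisorial wall in the wall-and-chamber picture of \cite{BMMMPwallcrossing}.) Now $\overline{\Mov}(X)$ is the closure of a fundamental chamber for the action of $W_{\text{Exc}}$ on the positive cone $\mathcal{C}_{X}$; being a Weyl chamber it lies entirely on one side of each reflection hyperplane of $W_{\text{Exc}}$, in particular of $\delta^{\perp}$. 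Hence $\delta^{\perp}$ cannot meet the interior of $\overline{\Mov}(X)$, contradicting the first step.

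The only point that genuinely requires input is the assertion that every $(-2)$-class on $\NS(X)$ yields a reflection in $W_{\text{Exc}}$ (equivalently, that $\pm\delta$ is stably prime exceptional): this is exactly where Markman's work is used, and, if one prefers a hands-on check, it can be re-derived on moduli spaces of sheaves --- any $(-2)$-class in $v^{\perp}$ is automatically orthogonal to $v$ and hence sits on a Brill--Noether divisorial wall --- and then transported to an arbitrary $X$ of K3$^{[n]}$-type, since membership of $R_{\delta}$ in $W_{\text{Exc}}$ depends only on the monodromy orbit of $\delta$. The remaining ingredients --- that $f^{*}$ preserves $\overline{\Mov}(X)$, and that a Weyl chamber lies on one side of each of its walls --- are standard.
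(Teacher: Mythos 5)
Your proof is correct, but it takes a genuinely different route from the paper's. The paper argues directly with effectivity: by \cite[Thm.~9.17]{Mar11} the class $\delta$ (or $2\delta$, up to sign) is the class of an effective divisor $D$; since $f$ is an isomorphism in codimension one, $D'=\sum_{i=0}^{\ord(f)-1}f^i(D)$ is again effective, while its cohomology class $\sum_i (f^i)^*\delta$ is simultaneously invariant and co-invariant, hence zero --- and a non-zero effective divisor cannot have trivial class. You instead run the argument through the chamber geometry of the movable cone: an averaged $f^*$-invariant class in the interior of $\overline{\Mov}(X)$ must be orthogonal to the co-invariant class $\delta$, while $\delta^\perp$ is a reflection hyperplane of $W_{\text{Exc}}$ and therefore misses that interior. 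Both arguments ultimately lean on the same theorem of Markman, just on different facets of it (effectivity of $\pm\delta$ versus stable prime exceptionality of $\pm\delta$ or $\pm 2\delta$), and your version is essentially the ``more general phenomenon'' recorded in the remark immediately following the lemma (cf.\ \cite[Lem.~2.14]{GOVSympBirTransfFiniteOrderOG6}): it applies verbatim to any stably prime exceptional class in the co-invariant lattice, not only to $(-2)$-classes, at the cost of importing the full fundamental-domain statement for the $W_{\text{Exc}}$-action on the positive cone. The paper's version is shorter and needs only effectivity plus the codimension-one property of birational maps. One small caution: your parenthetical reduction to Brill--Noether divisorial walls presupposes that $X$ is already a moduli space of sheaves, which is not available at this point of the paper (it is what \cref{MainThmSympBirImpliesModuli} is trying to prove); since you offer it only as an alternative to the Markman citation, which is the correct primary reference, this does not affect the validity of the argument.
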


\begin{proof}
Let $\delta\in \NS(X)$ be a $(-2)$-class in $S_{g^*}(X)$. By \cite[Thm.\ 9.17]{Mar11}, $\pm\delta$ or $\pm 2\delta$ is the class of an effective divisor $D$. Since $g$ is an isomorphism in codimension one,  $D'=\sum_{i=0}^{\ord(g)-1}g^i(D)$ is also an effective divisor. Its cohomology class $\sum_{i=0}^{\ord(g)-1}\pm(g^*)^i\delta$ or $\sum_{i=0}^{\ord(g)-1}\pm 2(g^*)^i\delta$ is both invariant and co-invariant, therefore $D'$ is homologous to $0$. Since linear and numerical equivalences coincide on HK manifolds, we obtain that $D'$ is linearly trivial, which is impossible.
\end{proof}

\begin{remark}
   In fact, the result is a consequence of a more general phenomenon. Namely, for a HK manifold $X$, the group $\MonB(X)$ preserves the (interior of the) movable cone, which consists of the classes $\beta$ for which  $(\beta,[E])>0$ for every stably prime exceptional class $[E]$ \cite[Prop. 1.8]{Mar11}. For $\alpha\in\Amp(X)$, the invariant class $\alpha'\coloneqq \sum_{i=1}^{\ord(g)}(g^i)^*\alpha$ lies in the interior of $\Mov(M)$. Given a $(-2)$-class $\delta$, either $\pm \delta$ of $\pm 2\delta$ is stably prime exceptional by \cite[Thm.\ 9.17]{Mar11}, so $(\delta,\alpha')\neq 0$. In particular, $\delta$ cannot lie in $S_{g^*}(X)$.
  
\end{remark}

We can now proceed to the proof of  Theorem \ref{MainThmSympBirImpliesModuli}.

\begin{proof}[Proof of Theorem \ref{MainThmSympBirImpliesModuli}]
    The crux of the proof lies in showing that $X$ is birational to a moduli of (twisted) sheaves on a K3 surface. The isomorphism follows from \cite[Section 7]{BMProjBirBridgelandModuli} (see also \S \ref{SectionReflectionAnyWall}).
    
	To do it, by Theorem \ref{lemmaUGiveModuliOnK3} and Lemma \ref{LemmaIsoClassGiveU}, it is enough to prove that $\MukaiLat^{1,1}_{\bbZ}$ admits an isotropic class. If $g$ is of order 2, and acts non-trivially on $A_X$, according to Remark \ref{RmkInvolution2ElemDiscGrp}, the co-invariant lattice $S_{g^*}(\MukaiLat)$ is a hyperbolic $2$-elementary lattice, and so the existence of such class follows by \cite[Thm.\ 4.3.3]{Nik81}. Otherwise, we show that $\widetilde{\Lambda}_{\bbZ}^{1,1}$ contains an indefinite lattice $L$ of rank at least five and use Remark \ref{RmkLatticeIndefRk5containsIsoClass}. 
    
    If $g^*|_{A_X} = -\Id$ and $g$ has order bigger than $2$, let $L = S_{g^*}(\widetilde{\Lambda})$. If $g^*|_{A_X} = \Id$, we let $L$ be the lattice generated by $S_{g^*}(X)$ and the vector $v$ as in Theorem \ref{ThmEmbedH2toK3lattice}. Then by Lemma \ref{Prop:extension of g on the Mukai lattice} $L$ is an indefinite sublattice of $\MukaiLat$ and we just need to show that it has rank at least 5.

	Set $f=g^2$ if $g^*|_{A_X}=-\Id$, or $f=g$ if $g^*|_{A_X}=\Id$, and $G=\langle f^* \rangle$ the finite group generated by $f^*$. Note that $S_{f^*}(X)$ is negative-definite by Lemma \ref{LemmaDescriptionCoinv}. Now, we will prove that $S_{f^*}(X)$ primitively embeds into the unique unimodular lattice $\Lambda_{1,25}$ of signature $(1,25)$. Since $S_{f^*}(X)$ embeds in the Mukai Lattice $\MukaiLat$, from Remark \ref{RmkInvolution2ElemDiscGrp} we have 
	\begin{eqnarray*}
	\ell(A_{S_{f^*}(X)}) = \ell(A_{S_{f^*}(\MukaiLat)}) &=&  \ell(A_{\MukaiLat^{f^*}})\\
	&\leq&  
	24-\rk S_{f^*}(\MukaiLat).
	\end{eqnarray*}
	We get $\ell(A_{S_{f^*}(X)})<\rk \Lambda_{1,25}-\rk S_{f^*}(\MukaiLat)$.
 This shows that $S_{f^*}(X) \subset \Lambda_{1,25}$ primitively by \cite[Cor.\ 1.12.3]{Nik79}.
 
  Finally, note that $f^*$ acts as $\Id$ on the discriminant group $A_X$ and by Lemma \ref{LemmaNoRootsInSgX}, the lattice $S_{f^*}(X)$ does not contain any ($-2$)-class. Therefore, we can apply Lemma \ref{LemmaSgAsSubgroupOfConway} to lift the action of $f^*$ to the Leech lattice $N_{24}$ and identify $S_{f^*}(X) = S_{f^*}(N_{24})$.
	Using this identification, along with the classification of fixed lattices for such groups in the Leech lattice $N_{24}$ as presented in \cite{HohnMason290FixedPt}, we can conclude that $S_{f^*}(X)$ has rank at least $24-16=8$. In particular, $S_{f^*}(X)\simeq S_G(N_{24})$ has order at least $24-16=8$. Since $S_{f^*}(X) \subset S_{g^*}(X) \subset L$, the bound of the rank applies to $L$.
\end{proof}

\begin{corollary}\label{CoroCoinvRkAtLeast8}
    Let $g\in \Bir(X)$ be a non-trivial symplectic birational self-map of finite order such that $\ord(g)>2$ or $g^*|_{A_X}=\Id$. Then $\NS(X)$ has rank at least 8.
\end{corollary}

\begin{proof}
Since $g$ acts symplectically on $X$, we have $S_{g^*}(X) \subset \NS(X)$ by Lemma \ref{LemmaDescriptionCoinv}. As shown in the proof of Theorem \ref{MainThmSympBirImpliesModuli}, for $g$ as in the statement we have $\rk S_{g^*}(X) \geq 8$. Consequently, the Picard number of $X$ is also at least 8.
\end{proof}

	\subsection{Markman's example $M_H(r,0,-s)$}\label{SectionMarkmanExamples} In this section we recall an explicit geometric example given by Markman \cite[sections 2.2, 11.1]{Mar13} of a symplectic birational involution on moduli spaces of sheaves on K3 surfaces with a non-trivial action on the discriminant group. 
	In the final part of the paper, where we study certain reflections coming from the so-called ``vertical wall'' (see Definition \ref{def:verticalwall}) of the stability manifold of the K3 surface, we revisit this example as it serves as one such reflection (see \S \ref{SectionMarkExStabCond}).

	Let $S$ be a projective K3 surface with $\Pic(S)=\matZ H$. Suppose that $r,s$ are two integers satisfying $s\geq r \geq 1$ and $\gcd(r,s)=1$. Set  $M\coloneqq  M_H(r,0,-s)$ the moduli space of stable sheaves on $S$ with Mukai vector $v=(r,0,-s)$. The Mukai isometry (\ref{MukaiIsoH2}) identifies $H^2(M,\matZ)$ with $H^2(S,\matZ)\oplus \matZ\cdot e$, where $e=(r,0,s)$.
	
	\begin{itemize}
		\item $\mathbf{r=1}$: In this case, $M_H(1,0,-s)=S^{[1+s]}$ and $e=[E]/2$ where $E \subset S^{[1+s]}$ is the diagonal, i.e. $E$ is the exceptional divisor of the Hilbert--Chow morphism $\varepsilon: S^{[1+s]} \function S^{(1+s)}$. 
		
		\item $\mathbf{r=2}$: In this case, $e=[E]$ where $E \subset M_H(2,0,-s)$ is the locus of stable sheaves which are not locally free. By \cite[Lem.\ 10.16]{Mar13}, $E$ is the exceptional divisor of Li's morphism \cite{Li93} from $M_H(2,0,-s)$ onto the Uhlenbeck--Yau compactification of the moduli space of $H$-slope stable vector bundles.
		
		\item $\mathbf{r\geq 3}$: In this case,  $e$ is not a $\hq$-effective class. Let $Z \subset M_H(r,0,-s)$ be the locus of stable sheaves which are not locally free or not $H$-slope stable. Denote by $U=M\setminus Z$ the locus of locally free $H$-slope stable sheaves and $\iota: U \function U$ the map that sends $\F$ to its dual sheaf $\F^\vee$. By \cite[Lem.\ 9.5]{Mar13}, $Z$ is a closed subset of codimension $\geq 2$ in $M$ and so $\iota: M \function M$ is a birational involution. By \cite[Prop.\ 11.1]{Mar13}, the induced map $\iota^*$ in cohomology corresponds to the reflection map $R_e$. Since $e\in\NS(X)$, the involution $\iota$ is in particular symplectic. 
	\end{itemize}

	\section{Picard rank $2$ case}\label{SectionPicRank2}
	
    Assume $X$ is a HK manifold of $K3^{[n]}$-type with Picard rank $2$. We know by Oguiso's result (c.f.\ \cite[Thm.\ 4.9]{DebarreHKMan}) that, while $\Aut(X)$ and $\Bir(X)$ can be infinite, whenever they are finite they are of the form $(\matZ/2\matZ)^r$ with $r\leq 2$. In this section, we prove Theorem \ref{PropPicRk2AllInvolutions} that states that any finite order birational self-map of $X$ is in fact of order $2$.

    \begin{proof}[{Proof of {Theorem \ref{PropPicRk2AllInvolutions}}}]
	Since $X$ has Picard rank $2$, its real N\'eron-Severi group $\NS_\mathbb{R}(X)$ is a $2$-plane. By a result of Markman \cite[Thm.\ 6.25]{Mar11}, there exist a polyhedral rational convex cone $\Delta\subset \Mov(X)$ such that
	$$ \overline{\Mov(X)} = \bigcup_{s\in\Bir(X)}s^*(\Delta).$$
    Let $\Id\neq g\in\Bir(X)$ be an element of 
    finite order. We can define
	$$ \Delta_g^0 \coloneqq \bigcup_{n\in\matZ} (g^{n})^*\Delta$$
	which is a finite union of convex polyhedral rational cones in $\overline{\Mov(X)}$. Define $\Delta_g$ as the convex hull of $\Delta_g^0$. Denote by $l_1,l_2$ the extremal rays of $\Delta_g$ (which lie already in $\Delta_g^0$). Then $g^*(\Delta_g)$ lies in the cone generated by $g^*(l_1)$ and $g^*(l_2)$, which lies in $\Delta_g$. By bijectivity of $g$, we get $g^*(\Delta_g)=\Delta_g$.
	
	Since the extremal rays of $\Delta_g$ are rational, we can choose integral generators $x_1$ of $l_1$ and $x_2$ of $l_2$. Then $g^*$ must either preserve or flip $l_1$ and $l_2$. In particular, $(g^2)^*(x_i)=\alpha_ix_i$ for some $\alpha_i>0$, $i=1,2$. Since $g^*$ is an integral isometry, we get $\alpha_1,\alpha_2\in\mathbb{Z}$ and $\alpha_1\alpha_2=1$. We get $\alpha_1=\alpha_2=1$ and hence $(g^2)^*=\Id|_{\NS(X)}$.

Since $\rk \Tr(X)$ is odd, the only integral Hodge isometries of $\Tr(X)$ are $\pm\Id$ (see \cite[\S 3.3]{Huy16}). Therefore $(g^2)^*|_{\Tr(X)}=(g^*|_{\Tr(X)})^2=\Id$, so in particular $g^*$ is an involution on $H^2(X,\matZ)$. Since the map $\Bir(X) \to \ho(H^2(X,\bbZ))$ is injective, we obtain that $g$ is an involution.

The second assertion follows from Corollary \ref{CoroCoinvRkAtLeast8}. Indeed, if $g$ is a symplectic automorphism, then $g^*|_{A_X} = \Id$ \cite[Thm.\ 26]{Mon16} and hence $\rk \NS(X) \geq 8$, which contradicts the hypothesis of the theorem. 
	\end{proof}
	
	\begin{remark}\label{RmkInvolutionAreReflections}
	 Another way to see the second part of Theorem \ref{PropPicRk2AllInvolutions} is as follows: any involution $\iota$ (different from $ -\Id$) on a rank $2$ lattice $L$ is a reflection. More precisely, pick any $x\in L$ with $x\neq \pm \iota(x)$ and consider  $y\coloneqq x+\iota(x)$ and $z\coloneqq x-\iota(x)$. We have $\iota(y) = y$ and $\iota(z)=-z$, so $\iota_\matQ$ and the reflection $R_z$ coincide on the basis $\{y,z\}$ of $L_\matQ$, hence coincide. Therefore, in the case of Picard rank $2$, a symplectic involution has to act as a reflection on $H^2(X,\bbZ)$. In Corollary \ref{CoroSympReflectionActsNonTrivDiscGrp}, we deduce that this involution cannot be biregular.
	\end{remark}

	\begin{remark}
	     In \cite[Prop.\ 4.19]{DebarreHKMan}, Debarre describes a family of polarized fourfolds whose birational groups are isomorphic to the infinite dihedral group. In particular, order $2$ elements exist despite the birational groups being infinite.
	\end{remark}

    	\section{Involutions for arbitrary Picard ranks}
    	
The goal of this section is to show the existence of symplectic finite order birational self-maps on certain HK manifold $X$ of K$3^{[n]}$-type as described in Theorem \ref{MainThm2}. According to the Torelli theorem, a way to proceed is to find monodromy operators of $H^2(X,\matZ)$. In \S \ref{SectionPicRank2} we proved that when $\Pic(X)$ has rank $2$, any such finite order operator is a reflection (see Remark \ref{RmkInvolutionAreReflections}). Therefore, one candidate in higher Picard rank are reflections $R_e\in \ho(H^2(X,\matZ))$ in the hyperplane orthogonal to some class $e\in\NS(X)$. We will use the following result of Markman.

\begin{citedprop}[{\cite[Prop.\ 1.5]{Mar13}}]\label{PropMarkmanConditionReIsMonodromyOp}
    Let $e\in H^2(X,\matZ)$ be a primitive class 
    with $e^2<0$. Then the reflection $R_e$ belongs to $\Mon(X)$ if and only if $e$ satisfies one of the following:
    \begin{enumerate}
        \item $e^2=-2$, or
        \item $e^2=2-2n$ and $n-1$ divides $(e,-)\in H^2(X,\matZ)^\vee$.
    \end{enumerate}
    Moreover, $R_e$ acts on $A_X$ as $\Id$ in (1)
    and $-\Id$ in (2).
\end{citedprop}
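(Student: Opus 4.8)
The plan is to verify separately the two conditions that together characterise membership in $\Mon(X)$. Recall from the introduction that, writing $L:=H^2(X,\matZ)\simeq\LambdaKn$ and $A_X\simeq\matZ/(2n-2)\matZ$, one has by \cite[Lem.\ 4.2]{Mar10} that $\Mon(X)$ is the preimage of $\{\pm\Id\}$ under the natural surjection $\pi\colon\ho(L,q_X)\to\ho(A_X,\bar{q}_X)$ of \cite[Prop.\ 1.14.2]{Nik79}. So I must determine (a) when the a priori merely rational isometry $R_e$ preserves $L$, and (b) when $\pi(R_e)=\pm\Id$, the sign emerging from the analysis.

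For (a): $R_e$ is automatically an isometry of $L\otimes\matQ$, hence $R_e\in\ho(L)$ if and only if $\tfrac 2{e^2}(x,e)\in\matZ$ for all $x\in L$, i.e.\ if and only if $e^2\mid 2\dv(e)$. Since $\dv(e)\mid e^2$ by definition of divisibility and $e^2<0$, writing $e^2=-k\,\dv(e)$ forces $k\in\{1,2\}$. I would also record the standard facts that $\dv(e)\mid 2n-2$ and that the class $\bar e:=e/\dv(e)\bmod L$ has order exactly $\dv(e)$ in $A_X$ — it is this cyclicity that makes step (b) tractable.

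For (b): since $e\in L$, for every $x\in L^\vee$ we have $(x,e)\in\matZ$, so reducing the reflection formula modulo $L$ presents $\pi(R_e)$ as the map $\bar x\mapsto\bar x+\tfrac 2k(x,e)\,\bar e$, a shift by a multiple of the single class $\bar e$ (well defined modulo $\ord(\bar e)=\dv(e)$, and with $\bar x\mapsto\dv(e)\,\bar{q}_X(\bar x,\bar e)$ surjective onto $\matZ/\dv(e)$ because the finite form on $A_X$ is non-degenerate). This rigidifies the situation. On the one hand $\pi(R_e)=\Id$ forces $\ord(\bar e)\mid\tfrac 2k$, which together with $|e^2|=k\,\dv(e)=k\,\ord(\bar e)$ and the evenness of $e^2$ leaves only $e^2=-2$; conversely if $e^2=-2$ then $\tfrac 2{e^2}(x,e)\,e=-(x,e)e\in L$, so indeed $\pi(R_e)=\Id$. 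On the other hand $\pi(R_e)=-\Id$ forces $2A_X\subseteq\langle\bar e\rangle$, i.e.\ $(n-1)\mid\dv(e)$, so $\dv(e)\in\{n-1,2n-2\}$; comparing the forced value $\bar{q}_X(\bar e)=e^2/\dv(e)^2$ with $\bar{q}_X$ of a generator of $A_X$ — which equals $-\tfrac1{2(n-1)}$ and in particular has odd numerator — rules out the surviving candidate $e^2=4-4n$ on parity grounds and leaves $e^2=2-2n$; conversely, when $e^2=2-2n$ and $(n-1)\mid\dv(e)$, a short computation of $\tfrac 2k(x,e)\,\bar e$ in coordinates on $A_X$, using once more that $\bar{q}_X(\bar e)$ is forced because $e$ is an honest vector of the even lattice $\LambdaKn$, collapses to $-2\bar x$, so $\pi(R_e)=-\Id$.

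The step I expect to be the main obstacle is (b): identifying $\pi(R_e)$ explicitly as a shift by $\bar e$ and keeping the two relevant moduli $\dv(e)$ and $2n-2$ straight, and — most delicate — extracting from the single hypothesis that $e\in\LambdaKn$ exactly the parity and quadratic-residue constraints (of the shape $u^2\equiv 1\pmod{n-1}$) that simultaneously eliminate every candidate square other than $-2$ and $2-2n$ and pin down the sign ($\Id$ versus $-\Id$) of the induced action in the two admissible cases.
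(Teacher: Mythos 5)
The paper does not actually prove this statement: it is quoted from \cite[Prop.~1.5]{Mar13}, so there is no in-paper argument to compare yours against. Taken on its own terms, your reconstruction is the natural lattice-theoretic proof and is essentially correct, \emph{granting} the paper's quoted description of $\Mon(X)$ as the preimage of $\{\pm\Id\}$ under $\ho(H^2(X,\matZ),q_X)\to\ho(A_X,\bar q_X)$: integrality of $R_e$ is exactly $e^2\mid 2\dv(e)$, giving $|e^2|=k\,\dv(e)$ with $k\in\{1,2\}$; reduction modulo $L$ exhibits the induced map on $A_X$ as $\bar x\mapsto \bar x+\tfrac2k(x,e)\,\bar e$ with $\bar e=e/\dv(e)$ of order $\dv(e)$; and the two signs are separated as you describe. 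One caveat you should record: Markman's criterion characterises $\Mon(X)$ inside the \emph{orientation-preserving} isometries $\ho^+(H^2(X,\matZ))$. For a reflection in a class of negative square this is automatic, since $R_e$ fixes the positive-definite $3$-space inside $e^\perp$, but the sentence is needed.

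The one genuine omission is in the exhaustion of candidates for $\pi(R_e)=-\Id$. From $(n-1)\mid\dv(e)$, $\dv(e)\mid 2n-2$ and $k\in\{1,2\}$ you obtain \emph{three} candidate squares, namely $-(n-1)$, $-(2n-2)$ and $-(4n-4)$, not two; you eliminate $-(4n-4)$ and keep $-(2n-2)$, but the candidate $e^2=-(n-1)$ with $\dv(e)=n-1$ is even whenever $n$ is odd and is never addressed. It dies by the same discriminant-form computation you apply to $4-4n$: an element $\bar e$ of order $n-1$ in $A_X\simeq\matZ/(2n-2)$ is $m$ times a generator with $\gcd(m,2n-2)=2$, so $m$ is even and $\bar q_X(\bar e)=-m^2/(2n-2)\bmod 2\matZ$, while $e^2/\dv(e)^2=-1/(n-1)=-2/(2n-2)$ would force $m^2\equiv 2\pmod 4$, which no even (indeed no) integer satisfies. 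With that case added, and with the converse verification for $\pi(R_e)=-\Id$ carried out separately for $\dv(e)=n-1$ and $\dv(e)=2n-2$ (both reduce, as you indicate, to a congruence $u^2\equiv 1\pmod{n-1}$ that is forced by $\bar q_X(\bar e)$ being the value of the standard form on the relevant element of $\matZ/(2n-2)$), the proof is complete.
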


\begin{corollary}\label{CoroSympReflectionActsNonTrivDiscGrp}
   If a symplectic birational involution $g\in\Bir(X)$ acts as a reflection $g^*=R_e$ on $H^2(X,\matZ)$, then necessarily $e^2=2-2n$. In particular, $g^*|_{A_X}=-\Id$.
\end{corollary}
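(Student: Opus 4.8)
The plan is to combine Markman's criterion (\cref{PropMarkmanConditionReIsMonodromyOp}) with the symplecticity hypothesis to rule out the case $e^2 = -2$. First I would observe that since $g$ is a birational self-map, $g^* = R_e$ is in particular a monodromy operator that preserves the Hodge structure, hence a Hodge isometry of $H^2(X,\bbZ)$; by \cref{PropMarkmanConditionReIsMonodromyOp} the primitive class $e$ must satisfy either $e^2 = -2$ or $e^2 = 2-2n$ (with $n-1 \mid (e,-)$). So it suffices to exclude $e^2 = -2$.

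The key point is that a reflection $R_e$ with $e^2 = -2$ cannot be symplectic. Recall $g$ symplectic means $g^*$ acts trivially on $H^{2,0}(X) = \bbC\sigma$, i.e.\ $R_e(\sigma) = \sigma$. Since $R_e(\sigma) = \sigma - \frac{2(\sigma,e)}{e^2}e = \sigma + (\sigma,e)\,e$ (using $e^2 = -2$), this forces $(\sigma, e) = 0$, so $e \in \NS(X)$. But a primitive $(-2)$-class in $\NS(X)$ is (up to sign, and up to passing to $2\delta$ being effective) the class of an effective divisor by \cite[Thm.\ 9.17]{Mar11}; more to the point, such a class lies in $\calS$, the set of classes cutting out walls of the movable cone (as recalled in the remark after \cref{LemmaNoRootsInSgX}). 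The reflection $R_e$ in such a wall does not preserve the interior of $\Mov(X)$: it maps an adjacent chamber (a birational model) across the wall. However, a birational self-map of $X$ must preserve $\Mov(X)$ and indeed fixes the movable cone — more precisely $R_e = g^* \in \MonB(X)$, and by the semidirect product decomposition $\MonH(X) = W_{\mathrm{Exc}} \rtimes \MonB(X)$ the reflections $R_E$ in prime exceptional classes lie in $W_{\mathrm{Exc}}$, not in $\MonB(X)$; since a $(-2)$-class is (up to rescaling) stably prime exceptional, $R_e \in W_{\mathrm{Exc}}$, contradicting $R_e \in \MonB(X)$ unless $R_e = \Id$, which is absurd for a genuine reflection. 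Hence $e^2 = -2$ is impossible, leaving $e^2 = 2-2n$. The final assertion $g^*|_{A_X} = -\Id$ is then immediate from the last sentence of \cref{PropMarkmanConditionReIsMonodromyOp}.

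Alternatively, and perhaps more cleanly, one can argue directly via the co-invariant lattice: if $g^* = R_e$ is symplectic of finite order (here order $2$), then $S_{g^*}(X) = \bbZ e$ is negative definite inside $\NS(X)$ by \cref{LemmaDescriptionCoinv}, and by \cref{LemmaNoRootsInSgX} it contains no $(-2)$-class; since $S_{g^*}(X) = \bbZ e$, this says precisely $e^2 \neq -2$. Combined with \cref{PropMarkmanConditionReIsMonodromyOp} this forces $e^2 = 2 - 2n$ and hence $g^*|_{A_X} = -\Id$. I would present this second route as the main argument, as it invokes only results already set up in this section.

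The only genuine subtlety — the ``hard part'' — is making sure the symplectic hypothesis is correctly used to land in \cref{LemmaNoRootsInSgX}: one must check that $g$ of finite order with $g^* = R_e$ a reflection forces the order to be exactly $2$ (clear, since $R_e^2 = \Id$ and $R_e \neq \Id$) and that $S_{g^*}(X)$ is exactly $\bbZ e$ rather than something larger (clear, since $R_e$ fixes the hyperplane $e^\perp$ pointwise and acts as $-1$ on $\bbZ e$). Everything else is a direct citation.
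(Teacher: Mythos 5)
Your second argument, which you designate as the main one, is exactly the paper's proof: $e$ lies in the co-invariant lattice $S_{g^*}(X)$, which is negative definite by \cref{LemmaDescriptionCoinv} and contains no $(-2)$-classes by \cref{LemmaNoRootsInSgX}, so \cref{PropMarkmanConditionReIsMonodromyOp} forces $e^2=2-2n$ and $g^*|_{A_X}=-\Id$. The first route via the movable cone and the decomposition $\MonH(X)=W_{\mathrm{Exc}}\rtimes\MonB(X)$ is a correct alternative but is not needed.
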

\begin{proof}
  Note that $e\in S_{R_e}(X)$ since for any $w\in H^2(X,\bbZ)^{R_e}$ we have $(e,w) = (-e, w)$. Moreover $e^2<0$ by Lemma \ref{LemmaDescriptionCoinv}. Then $e^2\neq -2$ by Lemma \ref{LemmaNoRootsInSgX} . 
\end{proof}

Furthermore, we will focus on the moduli space of sheaves $M$ on K3 surfaces and use certain walls in the stability manifold of the K3 surface to find these reflection hyperplanes in $\Mov(M)$.

	\subsection{Reminders on stability conditions}\label{SectionRemindersStabCond}
	
Let $S$ be a K3 surface and $\alpha\in\Br(S)$ be a Brauer class (see \cite[Section 10.2.2, 16.4]{Huy16} for details), and fix $B\in H^2(S,\matQ)$ a $B$-field lift of $\alpha$. Let $D^b(S,\alpha)$ denote the derived category of $\alpha$-twisted sheaves on $S$. We will not give the general definition of \textsl{stability conditions} (we redirect the interested reader to \cite{MacriSchmidtLecturesBridgelandStab} for the general theory, and we refer to \cite{BridgelandStabCondK3Surfaces} for the case of K3 surfaces), but only recall how to construct the ones we will use in the present paper and some of their properties.

 Pick a pair of $\matR$-divisors $\omega,\beta\in\NS(S)_\matR$, with $\omega\in\Amp(S)$, the ample cone inside $\NS(S)_{\bbR}$. For $E\in\D^b(S,\alpha)$, define
 \begin{eqnarray}\label{DefGeometricStabCond}
	Z_{\omega,\beta}(E)&=&(\exp(i\omega+\beta+B),v(E))\in \bbC  
 \end{eqnarray}
In particular, writing $v(E)=(r,\theta,s)$, we can write the imaginary part $\Im Z_{\omega,\beta}(E)=\omega\cdot(\theta-r(\beta+B))$.
Define a pair of additive subcategories (called a \textsl{torsion pair}) $\calT^{\omega,\beta},\calF^{\omega,\beta}\subset \D^b(S,\alpha)$ as follows: the non-trivial objects of $\calT^{\omega, \beta}$ (resp. $\calF^{\omega,\beta}$) are the twisted sheaves (resp. torsion free twisted sheaves) $F\in \Coh(S,\alpha)$ such that every non-trivial torsion free quotient $F\twoheadrightarrow G$ (resp. non-zero subsheaf $E\hookrightarrow F$) satisfy $\Im Z_{\omega,\beta}(G)>0$ (resp. $\Im Z_{\omega,\beta}(E)\leq 0$). Then define the abelian subcategory of $\D^b(S,\alpha)$:
\begin{eqnarray}
	\calA_{\omega,\beta}\coloneqq \{F\in\D^b(S,\alpha) \ | \ \calH^{-1}(F)\in \calF^{\omega,\beta}, \calH^0(F)\in \calT^{\omega,\beta}, \calH^i(F)=0 \text{ for }i\neq -1,0 \}
\end{eqnarray}

The following result is due to Bridgeland \cite{BridgelandStabCondK3Surfaces} in the untwisted case, and it was later generalized to the twisted case by Huybrechts, Macr\`i, Stellari \cite[Prop. 3.6]{HuybMacriStellariStabCondGenericK3Cat}.
\begin{theorem}
	The pair $\sigma_{\omega,\beta}=(\calA_{\omega,\beta},Z_{\omega,\beta})$ is a stability condition on $\D^b(S,\alpha)$ provided that $Z_{\omega,\beta}(E)\notin \matR_{\leq 0}$  for all spherical twisted sheaf $E\in\Coh(S,\alpha)$. This condition is satisfied whenever $\omega^2>2$. Moreover, the stability condition $\sigma_{\omega,\beta}$ depends continuously on $(\omega,\beta)\in\Amp(S)\times\NS(S)_\matR$.
\end{theorem}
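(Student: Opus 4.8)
The plan is to follow Bridgeland's original construction \cite{BridgelandStabCondK3Surfaces}, in the twisted incarnation of \cite[Prop.\ 3.6]{HuybMacriStellariStabCondGenericK3Cat}. There are four things to check: that $\calA_{\omega,\beta}$ is the heart of a bounded $t$-structure on $\D^b(S,\alpha)$, that $Z_{\omega,\beta}$ is a stability function on this heart, that the Harder--Narasimhan property holds for the pair, and that $\sigma_{\omega,\beta}$ depends continuously on $(\omega,\beta)$.

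For the $t$-structure I would exhibit $(\calT^{\omega,\beta},\calF^{\omega,\beta})$ as a torsion pair on $\Coh(S,\alpha)$ and tilt. The relevant slope is $\nu_{\omega,\beta}(F)=+\infty$ for $F$ torsion and $\nu_{\omega,\beta}(F)=\omega\cdot(\theta(F)-r(F)(\beta+B))/r(F)$ for $F$ torsion free, writing $v(F)=(r(F),\theta(F),s(F))$; this is the slope that governs the sign of $\Im Z_{\omega,\beta}$. One first recalls that $\nu_{\omega,\beta}$-semistability on $\Coh(S,\alpha)$ enjoys the Harder--Narasimhan property (standard for twisted sheaves, e.g.\ by descent from the untwisted case after a $B$-field shift). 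Then $\calT^{\omega,\beta}$ (resp.\ $\calF^{\omega,\beta}$) consists of the sheaves (resp.\ torsion-free sheaves) all of whose HN factors have slope $>0$ (resp.\ $\le 0$), torsion sheaves being placed in $\calT^{\omega,\beta}$. Orthogonality $\Hom(\calT^{\omega,\beta},\calF^{\omega,\beta})=0$ is immediate, since the image of a nonzero morphism would be a torsion-free sheaf that is simultaneously a quotient of an object of $\calT^{\omega,\beta}$ (hence of positive slope) and a subsheaf of an object of $\calF^{\omega,\beta}$ (hence of non-positive slope); and every $F\in\Coh(S,\alpha)$ sits in a short exact sequence $0\to T\to F\to G\to 0$ with $T\in\calT^{\omega,\beta}$, $G\in\calF^{\omega,\beta}$, obtained by letting $T$ be the largest subsheaf whose HN factors all have positive slope. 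Tilting this torsion pair yields the bounded $t$-structure whose heart is $\calA_{\omega,\beta}=\langle\calF^{\omega,\beta}[1],\calT^{\omega,\beta}\rangle$, and one checks this coincides with the category in the statement.

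The core of the proof is the verification that $Z_{\omega,\beta}$ is a stability function on $\calA_{\omega,\beta}$. Using the exact triangle $\calH^{-1}(E)[1]\to E\to\calH^0(E)$ and additivity of $Z_{\omega,\beta}$, one reduces to two points: that $\Im Z_{\omega,\beta}\ge 0$ on $\calT^{\omega,\beta}$ and on $\calF^{\omega,\beta}[1]$ — clear from the sign of the slope, once one notes that a torsion sheaf supported in dimension $1$ has $\Im Z_{\omega,\beta}=\omega\cdot\theta>0$ — and the delicate point that any $E\ne 0$ with $\Im Z_{\omega,\beta}(E)=0$ has $\Re Z_{\omega,\beta}(E)<0$. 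Chasing the triangle shows such an $E$ is an extension in $\calA_{\omega,\beta}$ of a finite-length sheaf $\calH^0(E)$, for which $Z_{\omega,\beta}=-\operatorname{length}(\calH^0(E))\le 0$, by a shift $\calH^{-1}(E)[1]$ with $\calH^{-1}(E)$ a $\nu_{\omega,\beta}$-semistable torsion-free sheaf of slope $0$; so it remains to prove $\Re Z_{\omega,\beta}(F)>0$ for such an $F$. Filtering $F$ by $\nu_{\omega,\beta}$-stable slope-$0$ subquotients and using additivity, we may assume $F$ is $\nu_{\omega,\beta}$-stable, hence $\alpha$-twisted simple, so that Serre duality on $S$ gives $v(F)^2\ge -2$, with equality exactly for spherical sheaves. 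Writing $v(F)=(r,\theta,s)$ and $\xi:=\theta/r-(\beta+B)$, the slope-$0$ condition reads $\omega\cdot\xi=0$, so $\xi^2\le 0$ by the Hodge index theorem, and a short computation from the definition of $Z_{\omega,\beta}$ gives
\[ \Re Z_{\omega,\beta}(F)=\frac r2\left(\omega^2-\xi^2\right)+\frac{v(F)^2}{2r}\ \ge\ \frac r2\,\omega^2-\frac1r. \]
If $v(F)^2\ge 0$ this is positive because $\omega^2-\xi^2\ge\omega^2>0$; if $v(F)^2=-2$ then $F$ is a spherical sheaf with $\Im Z_{\omega,\beta}(F)=0$, and the hypothesis $Z_{\omega,\beta}(F)\notin\matR_{\le 0}$ says precisely that $\Re Z_{\omega,\beta}(F)>0$. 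In particular, since $r\ge 1$, the bound above is positive as soon as $\omega^2>2$, which is then a sufficient condition on $\omega$ alone.

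Finally, for the Harder--Narasimhan property I would invoke Bridgeland's criterion: since $\calA_{\omega,\beta}$, being a tilt of $\Coh(S,\alpha)$, is Noetherian, there can be no infinite destabilizing chains of sub- or quotient objects of increasing resp.\ decreasing phase; Bridgeland moreover checks the support property, so that $\sigma_{\omega,\beta}$ is a full, locally finite stability condition (the twisted statement being \cite[Prop.\ 3.6]{HuybMacriStellariStabCondGenericK3Cat}). Continuity then follows from Bridgeland's deformation theorem: $Z_{\omega,\beta}$ is manifestly real-analytic in $(\omega,\beta)$, and the map sending a stability condition to its central charge is a local homeomorphism, so $(\omega,\beta)\mapsto\sigma_{\omega,\beta}$ is continuous on $\Amp(S)\times\NS(S)_\matR$. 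The hard part will be the positivity estimate of the previous paragraph — that $\Im Z_{\omega,\beta}(E)=0$ forces $\Re Z_{\omega,\beta}(E)<0$ — since that is where the Hodge index theorem and the bound $v(F)^2\ge -2$ enter, and where the hypothesis on spherical sheaves is used; the preliminary reduction of a general $\Im Z_{\omega,\beta}=0$ object of $\calA_{\omega,\beta}$ to finite-length sheaves and slope-$0$ stable sheaves, together with the Noetherianity and local-finiteness inputs for the Harder--Narasimhan and support properties, are the remaining technical ingredients.
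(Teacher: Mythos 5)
The paper does not actually prove this statement: it is recalled as a cited result, attributed in the sentence immediately preceding it to Bridgeland \cite{BridgelandStabCondK3Surfaces} in the untwisted case and to Huybrechts--Macr\`i--Stellari \cite[Prop.\ 3.6]{HuybMacriStellariStabCondGenericK3Cat} in the twisted case, so your proposal is not competing with an argument in the paper but reconstructing the one in the cited sources. Your reconstruction is faithful and essentially correct: the tilt at the torsion pair $(\calT^{\omega,\beta},\calF^{\omega,\beta})$, the reduction of the sign condition on $Z_{\omega,\beta}$ to $\nu_{\omega,\beta}$-stable slope-zero torsion-free sheaves and zero-dimensional torsion sheaves, the identity $\Re Z_{\omega,\beta}(F)=\tfrac r2\left(\omega^2-\xi^2\right)+\tfrac{v(F)^2}{2r}$ combined with the Hodge index theorem and the bound $v(F)^2\ge -2$ for simple sheaves (which is exactly where the spherical hypothesis and the sufficiency of $\omega^2>2$ enter), and the deduction of the Harder--Narasimhan property from Noetherianity of the heart together with Bridgeland's deformation theorem for continuity are precisely the steps of the original proof. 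The one place you overstate is the assertion that $\calA_{\omega,\beta}$ is Noetherian \emph{because} it is a tilt of $\Coh(S,\alpha)$: a tilt of a Noetherian heart need not be Noetherian in general, and Bridgeland establishes Noetherianity of this particular heart by a separate, nontrivial argument; since you explicitly flag Noetherianity as a remaining technical ingredient at the end, this is a matter of emphasis rather than a genuine gap.
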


We denote by $\Stab^{\dagger}(S)$ the component of the space of stability condition $\Stab(S)$ containing all stability conditions of the form $\sigma_{\omega,\beta}$.
We drop the $\omega,\beta$ when the context is clear. Define the \textsl{generalized rank} as $\rk_Z = \Im Z : \calA\to \matR_{\geq 0}$ and \textsl{generalized degree} as $\deg_Z = -\Re Z : \calA \to \matR$. These maps satisfy the usual properties of rank and degree of sheaves, such as $\rk(E)=0\Rightarrow \deg(E)>0$ and additivity with respect to exact sequences. Therefore one can define the \textsl{generalized slope} as
$$\mu_Z(E)\coloneqq  \deg_Z(E)/\rk_Z(E),$$
where $\mu_Z(E)=\infty$ when $\rk_Z(E)=0$.
Objects $E\in\D^b(S,\alpha)$ such that $E[k]\in\calA$ for some $k\in\matZ$ and $E[k]$ is slope-(semi)stable with respect to $\mu_Z$ are called $Z$-(semi)stable (or $\sigma$-(semi)stable).

\subsubsection{Walls and moduli spaces}

Fix a Mukai vector $v\in \widetilde{H}(S,\alpha,\matZ)$. 

\begin{citedthm}[{\cite[Prop.\ 3.3]{BayerMacriSpaceStabCondLocalProjPlane}}]
	There exists a locally finite set of \textsl{walls} (real codimension one submanifolds with boundary) in $\Stab(S)$, depending on $v$, such that the sets of $\sigma$-stable and $\sigma$-semistable objects of class $v$ do not change when $\sigma$ varies in a chamber (i.e.\ in a component of the complement of the walls).
\end{citedthm}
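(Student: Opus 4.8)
The plan is to follow Bridgeland's wall-and-chamber argument for K3 surfaces \cite{BridgelandStabCondK3Surfaces}, of which \cite[Prop.\ 3.3]{BayerMacriSpaceStabCondLocalProjPlane} is the analogue for the local projective plane: one proves the statement locally near an arbitrary $\sigma_0 = (\calA_0, Z_0) \in \Stab(S)$ and then glues. The crucial first step is to bound the classes that can destabilize an object of class $v$. Fix a Euclidean norm $\|\cdot\|$ on $\widetilde{H}(S, \matR)$. The support property, valid on a sufficiently small neighbourhood $U \ni \sigma_0$, furnishes a fixed quadratic form $Q$ and a constant $K > 0$ with $Q(v(B)) \geq 0$ for every $\sigma$-semistable object $B$ ($\sigma \in U$), while $|Z_\sigma(w)|^2 \geq K\|w\|^2$ whenever $Q(w) \geq 0$ and $\sigma \in U$. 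If now $E$ is $\sigma$-semistable of class $v$ for some $\sigma \in U$, each Jordan--Hölder factor $A_i$ of $E$ at $\sigma$ has $\phi_\sigma(A_i) = \phi_\sigma(E)$, so $Z_\sigma(v(A_i))$ lies on the ray through $Z_\sigma(v)$ and $|Z_\sigma(v(A_i))| \leq |Z_\sigma(v)|$; together with $Q(v(A_i)) \geq 0$ and the uniform bound $|Z_\sigma(v)| \leq C\|v\|$ this forces $\|v(A_i)\| \leq K^{-1/2} C \|v\|$. Hence the set $\calW = \calW_{v,\sigma_0}$ of all such Jordan--Hölder factor classes lies in the intersection of a fixed ball with the lattice $\widetilde{H}(S,\matZ)$ and is therefore \emph{finite}.

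With $\calW$ in hand, for each $w \in \calW$ not proportional to $v$ put
\[ W_w = \{\, \sigma = (\calA, Z) \in U \ : \ Z(w)/Z(v) \in \matR_{\geq 0} \,\}. \]
Since $\sigma \mapsto Z$ is a local homeomorphism of $\Stab(S)$ onto an open subset of $\Hom(\widetilde{H}(S,\matZ), \matC)$, each $W_w$ is empty or a real-codimension-one real-analytic submanifold with boundary --- namely the zero set of $\Im\big(Z(w)/Z(v)\big)$ inside $\{\Re(Z(w)/Z(v)) \geq 0\}$ --- and organizing the $W_w$ by the primitive rank-two sublattices $\overline{\langle v, w\rangle}$ recovers the walls of \cite{BayerMacriSpaceStabCondLocalProjPlane}. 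Calling the non-empty $W_w$ the \emph{walls} of $U$ and the connected components of $U \setminus \bigcup_w W_w$ the \emph{chambers}, finiteness of $\calW$ makes this a locally finite decomposition; since $\Stab(S)$ is covered by such charts whose wall sets agree on overlaps, this already yields the local finiteness asserted in the theorem.

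It remains to see that neither the $\sigma$-semistable nor the $\sigma$-stable objects of class $v$ change within a chamber $C$. Suppose $E$ has class $v$ and is $\sigma$-semistable for one point of $C$ but not for another; join the two points by a path $\gamma$ inside $C$. By openness of semistability --- a standard fact relying on boundedness of the family of $\sigma$-semistable objects of class $v$ and on semicontinuity of Harder--Narasimhan filtrations \cite{BridgelandStabCondK3Surfaces} --- there is a first parameter $\sigma_c \in \gamma$ at which $E$ is unstable, with maximal destabilizing subobject $B_1 \subset E$ satisfying $\phi_{\sigma_c}(B_1) > \phi_{\sigma_c}(E)$ while $\phi_\tau(B_1) \leq \phi_\tau(E)$ for $\tau$ just before $\sigma_c$; continuity of phases then gives $\tau \in C$ with $\phi_\tau(B_1) = \phi_\tau(E)$. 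At that $\tau$ the object $E$ is still semistable and $B_1$ is a proper subobject of equal phase, so the Jordan--Hölder factors of $B_1$ at $\tau$ are among those of $E$ and hence lie in $\calW$. Since $v(B_1)$ is not proportional to $v$ (its complement $v - v(B_1)$ must again be the class of a non-zero object of the heart), some factor class $w \in \calW$ is not proportional to $v$ and $\tau \in W_w$, contradicting $\tau \in C$. Therefore $E$ is $\sigma$-semistable for all $\sigma \in C$ or for none, and running the same argument with Jordan--Hölder factors shows that strict semistability --- hence also stability --- of class-$v$ objects is constant on $C$.

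The principal obstacle is the first paragraph: securing the support property for $\sigma_0$, hence (by its openness) uniformly on a neighbourhood, since this is exactly what makes $\calW$, and with it the set of walls, finite and locally finite. For the geometric stability conditions $\sigma_{\omega,\beta}$ used elsewhere in this paper one has Bridgeland's explicit estimates exploiting $\omega^2 > 0$ together with $v(E)^2 \geq -2$; for a general $\sigma \in \Stab(S)$ one invokes the support property on the distinguished component $\Stab^{\dagger}(S)$, which is preserved under deformation and under the $\widetilde{\mathrm{GL}}^{+}_2(\matR)$-action. Granting that, the description of the $W_w$, the finiteness of chambers, and the local constancy of (semi)stability are all formal consequences of the local homeomorphism $\sigma \mapsto Z$ and of semicontinuity of Harder--Narasimhan and Jordan--Hölder filtrations.
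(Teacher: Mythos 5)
This is a cited result (Bayer--Macr\`i, \emph{loc.\ cit.}, Prop.\ 3.3, itself a variant of Bridgeland's Prop.\ 9.3 for K3 surfaces), and the paper gives no proof of it --- it is used as a black box --- so there is nothing internal to compare against; your sketch in fact reconstructs the argument of the cited source. The structure is right and I see no genuine gap: the support property on a compact neighbourhood bounds the norms of classes of Jordan--H\"older factors, integrality makes that set $\calW$ finite, each non-proportional $w\in\calW$ cuts out a real-codimension-one locus $\{\operatorname{Im}(Z(w)/Z(v))=0,\ \operatorname{Re}(Z(w)/Z(v))\geq 0\}$, and a path/limit argument shows (semi)stability is constant off these loci. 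Two points are stated more loosely than they should be. First, your justification that $v(B_1)$ is not proportional to $v$ (``its complement is the class of a non-zero object'') does not actually rule out $v(B_1)=cv$ with $0<c<1$; the correct reason is that $B_1$ \emph{strictly} destabilizes $E$ at points arbitrarily close to $\sigma_c$, so $Z(B_1)$ cannot be a positive real multiple of $Z(E)$ there, forcing non-proportionality of the classes. Second, the existence of a ``first parameter at which $E$ is unstable'' presupposes that the semistable locus along the path is closed; the clean way to get this is again via the finite set $\calW$: take the infimum $t_c$ of unstable parameters, note that for $t\downarrow t_c$ the maximal destabilizing subobjects have classes in the finite set $\calW$, pass to a constant subsequence, and conclude by continuity of $Z$ that $\gamma(t_c)$ lies on a wall --- rather than invoking ``openness of semistability,'' which is delicate precisely because the heart varies. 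With those two repairs the argument is the standard one and is complete modulo the support property on the relevant component, which you correctly identify as the essential input.
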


A stability condition is called \textsl{generic} with respect to $v$ if it does not lie on a wall. When $v$ is primitive, then $\sigma$ lies on a wall if and only if there exists a strictly $\sigma$-semistable object of class $v$.

In \cite{BMProjBirBridgelandModuli}, the authors prove that for a generic $\sigma\in\Stab^{\dagger}(S)$ there exists a coarse moduli space $M_\sigma(v)$ of $\sigma$-semistable objects with Mukai vector $v$. It is a normal projective irreducible variety with $\matQ$-factorial singularities. When $v$ is primitive, then $M_\sigma(v)=M_\sigma^{st}(v)$, i.e.\ it consists only of $\sigma$-stable objects and hence is a smooth projective HK manifold of K$3^{[n]}$-type.

\begin{citedthm}[{\cite[Thm.\ 2.15]{BMMMPwallcrossing}}]
	Let $v=mv_0\in\widetilde{H}(S,\alpha,\matZ)$ be a vector with $v_0$ primitive and $m>0$, and let $\sigma\in\Stab^{\dagger}(S,\alpha)$ be a generic stability condition with respect to $v$.
	\begin{enumerate}
		\item The coarse moduli space $M_\sigma(v)$ is non-empty if and only if $v_0^2\geq -2$,
		\item Either $\dim M_\sigma(v)=v^2+2$ and $M_\sigma^{st}\neq \emptyset$, or $m>1$ and $v_0^2\leq 0$.
	\end{enumerate}
\end{citedthm}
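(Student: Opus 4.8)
The plan is to follow Bayer--Macrì and reduce everything to Gieseker stability by wall-crossing, then quote Yoshioka. First I would use that $\Stab^\dagger(S,\alpha)$ is connected and that the walls for the fixed class $v$ are locally finite, so that any generic $\sigma$ can be joined to any other generic stability condition by a path crossing finitely many walls transversally. Next, using the description of the large volume limit at the boundary of $\Stab^\dagger(S,\alpha)$ — and, if necessary, first applying a shift and a line--bundle twist so that $v$ becomes a positive Mukai vector in Yoshioka's sense, which changes neither $v^2$ nor $M_\sigma(v)$ nor $M_\sigma^{st}(v)$ — one shows that for $\omega$ ample with $\omega\gg 0$ along a ray, the stability condition $\sigma_{\omega,\beta}$ lies in a single ``Gieseker chamber'' on which $M_{\sigma_{\omega,\beta}}(v)$ is identified (possibly after an auto-equivalence) with a twisted Mumford--Gieseker moduli space $M_H(v)$ for a generic nearby polarization $H$.

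In that chamber the three assertions are classical. Writing $v=mv_0$ with $v_0$ primitive: non-emptiness of $M_H(v)$ is equivalent to $v_0^2\geq -2$; whenever the stable locus is non-empty it is smooth of dimension $v^2+2$, since at a stable (hence simple) sheaf $E$ on a K3 one has $\hom(E,E)=\ext^2(E,E)=1$, $\Ext^2(E,E)_0=0$ (unobstructedness, via the trace pairing), and $\ext^1(E,E)=2-\chi(E,E)=v^2+2$; and stable sheaves exist in all cases except $m>1$ with $v_0^2\in\{-2,0\}$, by O'Grady and Yoshioka. This is \cite[Thm.\ 8.1]{Yos01} and its refinements in the twisted version, which I would simply cite.

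It then remains to propagate these statements across a wall $W$ separating chambers $C_\pm$. Choosing generic $\sigma_\pm\in C_\pm$ and a generic $\sigma_0\in W$, I would analyse the Jordan--H\"older structure of $\sigma_0$-semistable objects of class $v$: their stable factors have Mukai vectors lying in the rank $\leq 2$ sublattice $\mathcal{H}\subset\widetilde{H}^*(S,\alpha,\matZ)$ spanned by $v$ and the $\sigma_0$-stable spherical or isotropic classes of the same phase. A case analysis of $\mathcal{H}$ (the ``totally semistable wall'' analysis of \cite{BMMMPwallcrossing}) shows that (i) $M_{\sigma_+}(v)$, $M_{\sigma_-}(v)$ and $M_{\sigma_0}(v)$ are simultaneously empty or non-empty, so by connectedness emptiness is governed by the Gieseker chamber and part (1) follows (the ``only if'' direction being the fact that $v_0^2<-2$ forces emptiness already for Gieseker stability); (ii) for $v$ primitive the wall-crossing induces a birational map $M_{\sigma_+}(v)\dashrightarrow M_{\sigma_-}(v)$ between HK manifolds of K$3^{[n]}$-type, so dimension and non-emptiness of the (here dense) stable locus are preserved; and (iii) for $m>1$ the stable locus is non-empty on one side iff on the other and the dimension is unchanged, which one checks by relating the local structure of $M_\sigma(mv_0)$ near a polystable point to the primitive moduli space $M_\sigma(v_0)$. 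Combined with the Gieseker case this gives (1) and (2).

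The hard part will be Step 3, the wall-crossing analysis: classifying the possible rank $2$ lattices $\mathcal{H}$ occurring at a wall and, in each case, explicitly producing a $\sigma_-$-semistable object of class $v$ out of the $\sigma_0$-stable factors while keeping track of the stable locus and the dimension. This is precisely the technical core of \cite{BMMMPwallcrossing} (building also on the projectivity and wall-crossing results of \cite{BMProjBirBridgelandModuli}); Steps 1 and 2 are then essentially formal, amounting to bookkeeping about the large volume limit plus the citation of Yoshioka's non-emptiness and smoothness theorems.
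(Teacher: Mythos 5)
This statement is quoted in the paper as a cited theorem (\cite[Thm.\ 2.15]{BMMMPwallcrossing}) and is not proved there, so there is no in-paper argument to compare against; what you have written is an outline of the proof as it appears in the source literature, and as such it is essentially correct in structure: reduction to the large volume limit where $M_\sigma(v)$ is a (twisted) Gieseker moduli space, the classical statements of \cite{Yos01} in that chamber (non-emptiness iff $v_0^2\geq -2$, smoothness and $\dim = v^2+2$ at stable points via $\ext^1(E,E)=2-\chi(E,E)$, existence of stable objects except for $m>1$ and $v_0^2\leq 0$), and propagation across finitely many walls.

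One caution about Step 3. Propagating non-emptiness and the dimension statement across a wall is not done in \cite{BMMMPwallcrossing} by directly manufacturing $\sigma_-$-semistable objects out of Jordan--H\"older factors in $\mathcal{H}_{\mathcal W}$ in all cases; the non-emptiness part rests on Toda's deformation invariance of counting invariants over $\Stab^\dagger(S,\alpha)$ (as recorded in \cite[Thm.\ 6.8]{BMProjBirBridgelandModuli}), and the totally-semistable-wall analysis of \cite{BMMMPwallcrossing} is built on top of that. Moreover, your item (ii) invokes the birational equivalence $M_{\sigma_+}(v)\dashrightarrow M_{\sigma_-}(v)$ to transfer non-emptiness of the stable locus, but that birational equivalence is itself established only after (and using) the non-emptiness and dimension statements you are trying to prove, so as written the argument is circular at that point. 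If you intend this as a genuine proof rather than a reading guide to \cite{BMMMPwallcrossing}, you need to replace (ii)--(iii) by either the counting-invariant argument or the full inductive wall-crossing construction; as a sketch that openly defers the technical core to the cited papers, it is an accurate account of where the difficulty lies.
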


\begin{remark}
	Given a polarization $H\in \Pic(S)$ which is generic with respect to $v$, there is always a \textsl{Gieseker chamber} in $\Stab^\dagger(S)$: the moduli space $M_\sigma(v)$ for any $\sigma$ in this chamber is isomorphic to the moduli space of $H$-Gieseker stable sheaves, see \cite[Prop. 14.2]{BridgelandStabCondK3Surfaces}.
\end{remark}

The Hodge structure on the second cohomology of $M_\sigma(v)$ is closely related to the one of $S$ by the so-called \textsl{Mukai Hodge isometry}
\begin{eqnarray}\label{MukaiIsoH2}
\theta:H^2(M_\sigma(v),\matZ) \to 
\begin{cases}
v^\perp  \subset \widetilde{H}(S,\alpha, \matZ) & \text{ if } v^2\neq 0 \\
v^\perp/v  \subset \widetilde{H}(S,\alpha, \matZ) & \text{ if } v^2= 0.
\end{cases}
\end{eqnarray}
The latter has been shown by Yoshioka \cite[Sections 7 \& 8]{Yos01} for moduli of Gieseker stable sheaves, and generalized by Bayer and Macr\`{i} \cite[Thm.\ 7.10]{BMProjBirBridgelandModuli} to Bridgeland stability conditions. This way, one can identify the groups $\NS(M_{\sigma}(v))$ with $\NS(M_{\tau}(v))$, for any two $v$-generic stability conditions $\sigma$ and $\tau$.

\subsubsection{Wall-crossing}

Bayer and Macr\`i proved in \cite{BMMMPwallcrossing} that the MMP for a moduli space of Gieseker stable sheaves $M_H(v)$ can be performed by wall-crossing in the space of stability conditions; namely, they proved the following.

\begin{citedthm}[{\cite[Thm.\ 1.1, 1.2]{BMMMPwallcrossing}}]\label{ThmBMAllBirModelAreModuliStab}
	Let $v$ be a primitive Mukai vector.
	\begin{enumerate}
		\item Given $\sigma,\tau\in\Stab^{\dagger}(S)$ generic, then the two moduli spaces $M_\sigma(v)$ and $M_\tau(v)$ of Bridgeland-stable objects are birational to each other.
		\item Fix a (generic) base point $\sigma\in\Stab^{\dagger}(S)$. Every smooth $K$-trivial birational model of $M_{\sigma}(v)$ appears as a moduli space $M_\tau(v)$ for some $\tau\in\Stab^{\dagger}(S)$.
	\end{enumerate}
\end{citedthm}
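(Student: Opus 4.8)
The plan is to derive both parts of \cref{ThmBMAllBirModelAreModuliStab} from the local finiteness of the wall-and-chamber decomposition of $\Stab^{\dagger}(S)$ attached to the fixed primitive $v$, combined with a wall-by-wall analysis of how the moduli space changes. For part (1), I would first join the two generic stability conditions $\sigma$ and $\tau$ by a path in $\Stab^{\dagger}(S)$ meeting only finitely many walls, each at a generic interior point, and so reduce to the case of a single wall $W$: fix a generic $\sigma_0\in W$ and generic $\sigma_{\pm}$ in the two adjacent chambers. The crux is that the locus of $\sigma_0$-stable objects of class $v$ is open and dense in each of $M_{\sigma_+}(v)$ and $M_{\sigma_-}(v)$. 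Density follows because a $\sigma_{\pm}$-stable object that is strictly $\sigma_0$-semistable admits a Jordan--H\"older filtration whose factors have Mukai vectors $a_i$ lying in the rank-two ``potential wall'' sublattice $\calH_W\subset\widetilde{H}^{1,1}(S,\mathbb{Z})$ associated with $W$, with $a_i^2\geq -2$ and $\sum_i a_i=v$; over the finitely many such decompositions a dimension count — using that $v$ is primitive, so $M_{\sigma_\pm}(v)=M_{\sigma_\pm}^{st}(v)$ is smooth of dimension $v^2+2$ — bounds the strictly semistable locus by a subscheme of positive codimension. Hence the two (irreducible) moduli spaces share a common dense open subscheme, so they are birational.

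For part (2), the plan is to invoke the Bayer--Macr\`i divisor class map $\ell\colon\Stab^{\dagger}(S)\to\NS(M_\sigma(v))_{\mathbb{R}}$ and its two defining properties: on each chamber $\calC$, $\ell$ takes values in the ample cone of the moduli space attached to $\calC$; and $\ell$ extends continuously across every wall, where its value lies on a face of the nef cone and induces precisely the contraction collapsing the objects destabilised along that wall. Gluing the images of the chambers, and transporting everything to a fixed model through the birational maps of part (1), realises a chamber decomposition of $\Mov(M_\sigma(v))$ by the nef cones of the various $M_\tau(v)$. By Markman's description of the movable cone (\cite{Mar11}), any smooth $K$-trivial birational model $Y$ of $M_\sigma(v)$ has $\Nef(Y)$ equal to one of the chambers of $\overline{\Mov}(M_\sigma(v))$; choosing a generic $\tau$ with $\ell(\tau)$ in the interior of that chamber then yields $Y\cong M_\tau(v)$. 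One point requiring care is that $\ell$ hits the \emph{entire} movable cone, which relies on the description of $\Stab^{\dagger}(S)$ as a covering of a period domain of the hyperbolic lattice $v^{\perp}$ (via the Mukai isometry (\ref{MukaiIsoH2})) together with the classification of spherical classes, ensuring no chamber of $\overline{\Mov}$ is missed.

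The main obstacle is the detailed wall-crossing dictionary underlying both steps: classifying the walls of $\Stab^{\dagger}(S)$ for $v$ — fake walls, totally semistable walls, divisorial walls (of Brill--Noether, Hilbert--Chow, or Li--Gieseker--Uhlenbeck type), and flopping walls — in terms of the spherical, isotropic and positive classes of the rank-two lattice $\calH_W$, and matching each type with the corresponding birational operation on $M_\sigma(v)$ (isomorphism, divisorial contraction, or flop). The attendant dimension estimates for the strictly semistable loci, and the proof that $\ell$ is ample on chambers and nef across walls, are where the genuine work lies; granting these, parts (1) and (2) reduce to organising chambers and citing Markman's movable-cone theorem.
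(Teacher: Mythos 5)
First, a point of comparison: the paper does not prove this statement at all --- it is imported verbatim from Bayer--Macr\`i \cite{BMMMPwallcrossing}, so there is no internal proof to measure you against, only the original argument. Your overall architecture (local finiteness of the walls, reduction to a single wall crossed at a generic point, the rank-two hyperbolic lattice $\calH_\calW$ controlling the destabilising Mukai vectors, and for part (2) the divisor class map $\ell$ together with Markman's chamber decomposition of $\overline{\Mov}$) is indeed the skeleton of their proof.

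However, the step you call the crux of part (1) is genuinely wrong as stated. You claim that for every wall the $\sigma_0$-stable locus is open and dense in both $M_{\sigma_\pm}(v)$, and that a dimension count over the finitely many decompositions $v=\sum_i a_i$ inside $\calH_\calW$ bounds the strictly semistable locus in positive codimension. This fails exactly at the \emph{totally semistable} walls of \cref{DefWallsTypes}, where by definition $M^{st}_{\sigma_0}(v)=\emptyset$: every $\sigma_\pm$-stable object of class $v$ becomes strictly semistable on the wall, the two moduli spaces share no open locus of $\sigma_0$-stable objects, and no dimension count can rescue the claim because its conclusion is false. Such walls are not exotic --- they occur whenever $\calH_\calW$ contains an isotropic class $w$ with $(w,v)=1$ or a spherical class $s$ with $(s,v)<0$; the Hilbert--Chow wall for $v=(1,0,1-n)$ is already an example. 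Bayer--Macr\`i handle these walls by a different mechanism: they construct a composition of spherical twists (an autoequivalence of $\D^b(S)$ determined by the spherical and isotropic classes of $\calH_\calW$) carrying a dense open subset of $M_{\sigma_+}(v)$ onto a dense open subset of a moduli space for which the wall is no longer totally semistable, and the birational map of part (1) is induced by this Fourier--Mukai type transform rather than by the identity on a common stable locus. Without this ingredient part (1) --- and hence the chamber-by-chamber gluing on which your part (2) relies --- does not go through.
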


\noindent To be more precise, the authors construct a map 
\begin{eqnarray}\label{MapStabToNS}
l:\Stab^{\dagger}(S) \to \NS(M_{\sigma}(v)),
\end{eqnarray} 
such that for any chamber $\calC\subset \Stab^{\dagger}(S)$ and $\tau\in\calC$ we have $l(\calC)=\Amp(M_\tau(v))$. Given a wall $\calW$ for $v$ and $\sigma_0$ a generic stability condition on the wall (namely, which does not belong to any other wall), let $\sigma_+,\sigma_-$ be two $v$-generic stability conditions nearby $\calW$ in opposite chambers. 

\begin{citedthm}[{\cite[Thm.\ 1.4]{BMProjBirBridgelandModuli}}]\label{ThmBMContractionWall}
	There exist birational contractions
	$$\pi^{\pm} : M_{\sigma_\pm}(v) \to \overline{M}_\pm$$
	where $\overline{M}_\pm$ are normal irreducible projective varieties. The curves contracted by $\pi^\pm$ are precisely the curves of objects that are $S$-equivalent to each other with respect to $\sigma_0$.
\end{citedthm}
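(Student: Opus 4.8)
The plan is to deduce this from the construction of the Bayer--Macr\`i map $l$ of (\ref{MapStabToNS}), the \emph{Positivity Lemma} that underlies it, and the construction of $M_{\sigma_0}(v)$ as a good moduli space. Throughout I assume $v$ primitive, so that $M_{\sigma_\pm}(v)$ are smooth projective hyperk\"ahler manifolds.

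\emph{Step 1 (the class $\ell_{\sigma_0}$ and its null curves).} Recall that $l$ is built from a Donaldson-type pairing: given a flat family $\calE$ of $\sigma$-semistable objects of class $v$ over a base $T$ and an integral curve $C\subset T$, one sets $\ell_{\sigma,\calE}\cdot C := \Im\!\big(-Z_\sigma(\Phi_{\calE}(\calO_C))/Z_\sigma(v)\big)$, where $\Phi_{\calE}$ is the Fourier--Mukai functor with kernel $\calE$; applied to a (quasi-)universal family on $M_{\sigma_\pm}(v)$ this produces $\ell_\sigma\in\NS(M_{\sigma_\pm}(v))$, depending continuously on $\sigma$, with $\ell_\tau$ ample for $\tau$ in the open chamber (as recorded before the statement). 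In particular $\ell_{\sigma_0}:=l(\sigma_0)$ lies in $\overline{\Amp}(M_{\sigma_\pm}(v))$ and is therefore nef. The additional input needed is the precise null locus: for an integral curve $C\subset M_{\sigma_\pm}(v)$ one has $\ell_{\sigma_0}\cdot C=0$ if and only if all objects parametrized by $C$ are $S$-equivalent with respect to $\sigma_0$.

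\emph{Step 2 (constructing the contraction).} The moduli stack $\calM_{\sigma_0}(v)$ of $\sigma_0$-semistable objects of class $v$ admits a good moduli space $M_{\sigma_0}(v)$, a proper algebraic space whose closed points are the $\sigma_0$-$S$-equivalence classes. Every $\sigma_\pm$-stable object of class $v$ is $\sigma_0$-semistable ($\sigma_0$ being a specialization of $\sigma_\pm$, and semistability being a closed condition), so the universal family on $M_{\sigma_\pm}(v)$ induces a morphism $\pi^\pm:M_{\sigma_\pm}(v)\to M_{\sigma_0}(v)$, $E\mapsto [E]_{\sigma_0}$; set $\overline{M}_\pm$ to be its image. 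By functoriality of the pairing in Step 1 one has $\ell_{\sigma_0}=(\pi^\pm)^{*}A$ for a natural class $A$ on $M_{\sigma_0}(v)$, and combining with the Positivity Lemma, $A$ is nef and separates $\sigma_0$-$S$-equivalence classes; a further argument (separation of points and tangent vectors, as in the proof of projectivity of Bridgeland moduli spaces) upgrades $A$ to an ample class, so $M_{\sigma_0}(v)$, hence $\overline{M}_\pm$, is a normal projective irreducible variety (normality and irreducibility pass down from $M_{\sigma_\pm}(v)$ via surjectivity of $M_{\sigma_\pm}(v)\to\overline{M}_\pm$ and Stein factorization), and $\pi^\pm$ is a projective morphism contracting exactly the curves of $\sigma_0$-$S$-equivalent objects. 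It is birational because, $v$ being primitive and $\sigma_0$ generic on $\calW$, the generic object of class $v$ is still $\sigma_0$-stable outside the totally semistable case, so $\pi^\pm$ is an isomorphism over a dense open subset (equivalently $\ell_{\sigma_0}$ is big).

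\emph{Main obstacle.} The heart of the matter is the Positivity Lemma --- the inequality $\ell_{\sigma_0}\cdot C\geq 0$ together with the equality characterization. The mechanism I would use: pass to the normalization of $C$, take the relative Harder--Narasimhan and Jordan--H\"{o}lder filtrations of the family with respect to $\sigma_0$ (which requires the heart $\calA_{\sigma_0}$ to be Noetherian and well-behaved in families, in the spirit of Abramovich--Polishchuk), and express $Z_{\sigma_0}$ of the pushforward as a sum of contributions all lying in a half-plane determined by the phase of $v$; positivity of the imaginary part gives $\ell_{\sigma_0}\cdot C\geq 0$, and equality forces each factor to have the same $\sigma_0$-phase and the same primitive Mukai vector, i.e.\ $S$-equivalence of the fibres. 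Two further points require care: the existence, properness and good-moduli-space properties of $M_{\sigma_0}(v)$ for $\sigma_0$ only on a wall; and the \emph{totally semistable wall} case, where no object of class $v$ is $\sigma_0$-stable and one must invoke the explicit minimal-class / stable-reduction analysis of \cite{BMMMPwallcrossing} to still conclude that $\pi^\pm$ is birational onto its image.
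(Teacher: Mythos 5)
The paper offers no proof of this statement: it is imported verbatim from Bayer--Macr\`i \cite[Thm.\ 1.4]{BMProjBirBridgelandModuli}, so the only meaningful comparison is with the proof in that source. Your Step 1 matches it: the class $\ell_{\sigma_0}$ is defined by the Donaldson-type pairing, and the Positivity Lemma (nefness together with the characterization of the null curves via $S$-equivalence) is exactly the engine, proved as you indicate by relative Harder--Narasimhan and Jordan--H\"older arguments in the spirit of Abramovich--Polishchuk.

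Your Step 2, however, diverges from the actual argument in a way that opens a genuine gap. Bayer--Macr\`i do \emph{not} construct $\overline{M}_\pm$ as a good moduli space of the $\sigma_0$-semistable stack: no such space (let alone a projective one) was known to exist for $\sigma_0$ on a wall at the time, and its projectivity is essentially the content of the theorem, so your route is close to circular. Instead they show that $\ell_{\sigma_0}$ is nef \emph{and big} on the hyperk\"ahler manifold $M_{\sigma_\pm}(v)$ (bigness follows from $q(\ell_{\sigma_0})>0$ for $\sigma_0$ generic on the wall, since $l$ maps the wall to a codimension-one locus meeting the interior of the positive cone), and then apply the Kawamata base-point-free theorem on the $K$-trivial variety $M_{\sigma_\pm}(v)$ to conclude that $\ell_{\sigma_0}$ is semiample; $\overline{M}_\pm$ is by definition $\Proj$ of the section ring and $\pi^\pm$ the induced contraction, whose contracted curves are exactly the $\ell_{\sigma_0}$-null curves, i.e.\ the curves of $S$-equivalent objects by the Positivity Lemma. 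Two concrete problems with your version: (a) the step ``a further argument \dots upgrades $A$ to an ample class'' is unjustified --- a nef class that is positive on every curve need not be ample (Mumford's example; even with bigness this is Serrano's conjecture, open in general) --- whereas semiampleness via base-point-freeness sidesteps this entirely; (b) bigness of $\ell_{\sigma_0}$, which you invoke only parenthetically to get birationality, is an essential input that must be proved and does not follow from genericity of $\sigma_0$-stability of the general object: on a totally semistable wall there are no $\sigma_0$-stable objects of class $v$ at all, yet $\pi^\pm$ is still birational precisely because $\ell_{\sigma_0}$ is big.
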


The type of birational transformation is described as follows.

\begin{definition}\label{DefWallsTypes}
	We call a wall $\calW$:
	\begin{enumerate}
		\item a \textit{fake wall} if there are no curves in $M_{\sigma_\pm}(v)$ of objects that are $S$-equivalent to each other with respect to $\sigma_0$,
		\item a \textit{flopping wall} if we can identify $\overline{M}_+=\overline{M}_-$ and the induced map $M_{\sigma_+} \dashrightarrow M_{\sigma_-}$ induces a flopping contraction,
		\item a \textit{divisorial wall} if the morphisms $\pi^{\pm}:M_{\sigma_\pm}(v) \to \overline{M}_\pm$ are both divisorial contractions.
	\end{enumerate}
\end{definition}

The type of wall can be studied purely lattice-theoretically. Given a wall $\calW$, one can associate the hyperbolic rank $2$ primitive sublattice $\calH_\calW\subset \widetilde{H}(S,\alpha,\matZ)$ given by
$$\calH_\calW = \{w\in \widetilde{H}(S,\alpha,\matZ) \ | \ \Im \dfrac{Z(w)}{Z(v)}=0 \textit{ for all } \sigma=(\calA,Z)\in\calW \}.$$
Conversely, given a primitive rank $2$ hyperbolic sublattice $\sH$ containing $v$, one can define a \textit{potential wall} $\calW$ associated to $\calH$ as a connected component of the real codimension one submanifold of stability conditions $\sigma=(\calA,Z)$ which satisfy that $Z(\calH)$ is contained in a line. 

\begin{citedthm}[{\cite[Thm.\ 5.7]{BMMMPwallcrossing}}]\label{ThmLatticeTypeWall}
	Let $\calH\subset \widetilde{H}(S,\alpha,\matZ)$ be a primitive hyperbolic rank $2$ lattice containing $v$ and $\calW\subset \Stab^{\dagger}(S)$ a potential wall associated to it.
	\begin{enumerate}
		\item[(1)] The set $\calW$ is a divisorial wall if there exists a class $s\in\calH$ such that $(s,v)=0$ and $s^2=-2$, or an isotropic class $w\in\calH$ such that $(w,v)=1$ or $2$.
		\item[(2)] The set $\calW$ is a flopping wall if the conditions in $(1)$ are not satisfied and either $v$ can be written as $v=a_1+a_2$ with $a_i\in\calH$, $a_i^2\geq 0$ and $(a_i,v)>0$, $i=1,2$, or  there exists a class $s\in\calH$ with $s^2=-2$ and $0<(s,v)\leq v^2/2$.
		\item[(3)] In all other cases, $\calW$ is either a fake wall or it is not a wall.
	\end{enumerate}
\end{citedthm}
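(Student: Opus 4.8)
The plan is to fix a generic stability condition $\sigma_0$ on the potential wall $\calW$ and to translate each behaviour in \cref{DefWallsTypes} into the arithmetic of the rank-two lattice $\calH$, by analysing the Harder--Narasimhan and Jordan--H\"older filtrations, with respect to $\sigma_0$, of objects of class $v$. First I would record the positivity constraints: if $E\in\Db(S,\alpha)$ is $\sigma_0$-semistable of class $v$, all its Jordan--H\"older factors share the same $\sigma_0$-phase, so their Mukai vectors lie in $\calH$ (this is the defining property of $\calH=\calH_\calW$); for a $\sigma_0$-stable object $A$ one has $v(A)^2\geq -2$, with equality exactly when $A$ is spherical; and for non-isomorphic $\sigma_0$-stable objects $A,B$ of the same phase, $\Hom(A,B)=\Hom(B,A)=0$, so by Riemann--Roch $(v(A),v(B))=\ext^1(A,B)\geq 0$. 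Thus every $\sigma_0$-semistable $E$ of class $v$ determines an \emph{admissible} decomposition $v=\sum_i m_i a_i$ with the $a_i\in\calH$ pairwise distinct, primitive, $a_i^2\geq -2$ and $(a_i,a_j)\geq 0$ for $i\neq j$; conversely, the type of the contraction depends only on which admissible decompositions are realised by the \emph{generic} such $E$, so the statement becomes lattice-theoretic.

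The second step is the structure theory of these decompositions, i.e.\ the analysis of totally semistable walls. After fixing the positive cone $\calP\subset\calH_\matR$ compatible with the orientation coming from $\sigma_0$, one studies the spherical classes of $\calH$ (governed by the Weyl group $W_\calH$ generated by their reflections) and the isotropic rays. Arguing by induction on $v^2$, using Yoshioka's non-emptiness results and the existence of a Gieseker chamber in which $M_\sigma(v)$ is the classical moduli space $M_H(v)$, I would show that either $\calW$ is not an actual wall, or there is a distinguished \emph{minimal class} $v_0\in\calH$ --- obtained from $v$ by iterated reflections in the effective spherical classes of $\calH$ --- which is the Mukai vector of a genuine $\sigma_0$-stable object $B_0$, with $v_0$ either spherical ($v_0^2=-2$, the totally semistable case), isotropic ($v_0^2=0$), or equal to $v$ with $v^2>0$. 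The generic $\sigma_0$-semistable $E$ of class $v$ is then built from $B_0$ by (universal) extensions with the spherical objects whose classes lie in $\calH$, and all the data $(s,v)$, $(w,v)$, $a_i^2$, $(a_i,v)$ of the statement is read off from this minimal picture.

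Given this, the trichotomy is obtained by dimension counts. Let $\sigma_\pm$ be generic on the two sides of $\calW$, and let $\pi^\pm\colon M_{\sigma_\pm}(v)\to\overline{M}_\pm$ be the contractions of \cref{ThmBMContractionWall}; for each admissible decomposition one compares the dimension of the locus of $\sigma_\pm$-stable objects realising it with the dimension of the loci of objects $S$-equivalent with respect to $\sigma_0$, i.e.\ the fibres of $\pi^\pm$. When there is a spherical $s$ with $(s,v)=0$, the Brill--Noether locus on which the spherical object $A_s$ occurs as a sub- or quotient object is a divisor, contracted with one-dimensional fibres. When there is an isotropic $w$ with $(w,v)=1$ (resp.\ $2$), the moduli space $M_{\sigma_0}(w)$ is a K3 surface $S'$, and a Fourier--Mukai transform sending $w$ to the class of a point, together with the Mukai isometry~\eqref{MukaiIsoH2}, identifies $\overline{M}_\pm$ with the symmetric product of $S'$ (resp.\ the associated Uhlenbeck-type singular quotient) and $\pi^\pm$ with a Hilbert--Chow (resp.\ Li--Gieseker--Uhlenbeck) morphism, exemplified by $M_H(1,0,-s)$ (resp.\ $M_H(2,0,-s)$) in \S\ref{SectionMarkmanExamples}; in all three cases $\pi^\pm$ is divisorial. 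If instead case (1) fails but $v=a_1+a_2$ with $a_i^2\geq 0$ and $(a_i,v)>0$, or there is a spherical $s$ with $0<(s,v)\leq v^2/2$, then the destabilised objects (non-split extensions of the two pieces, respectively universal extensions with $A_s$) sweep out a locus of codimension $\geq 2$ along which a positive-dimensional family of rational curves is contracted, so $\pi^\pm$ is small; here the bound $(s,v)\leq v^2/2$ is exactly what keeps the codimension $\geq 2$. In every remaining configuration one checks, via \cref{ThmBMContractionWall}, that either the generic object of class $v$ is already $\sigma_0$-stable (so $\calW$ is not a wall), or no curve of $S$-equivalent objects occurs (so $\calW$ is a fake wall).

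The main difficulty is the totally semistable case of the second step: when no $\sigma_0$-stable object of class $v$ exists, one must track the iterated spherical twists relating $v$ to its minimal class $v_0$ and show that the fibre dimensions of $\pi^\pm$, hence the contraction type, depend only on the numerical conditions in the statement and not on the chosen chain of reflections. Tied to this is the need to pin down $\overline{M}_\pm$ precisely enough to separate the Hilbert--Chow case $(w,v)=1$ from the Li--Gieseker--Uhlenbeck case $(w,v)=2$, and to verify that the three cases are exhaustive and mutually exclusive for an arbitrary hyperbolic rank-two lattice $\calH$ containing $v$ --- elementary but delicate number theory about the small values represented by $\calH$.
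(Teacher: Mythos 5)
This statement is quoted verbatim from \cite[Thm.~5.7]{BMMMPwallcrossing} and the paper gives no proof of it, so there is no internal argument to compare yours against; the only meaningful benchmark is the original Bayer--Macr\`i proof. Measured against that, your outline reproduces its architecture quite faithfully: the confinement of Jordan--H\"older factors of $\sigma_0$-semistable objects to $\calH_\calW$ together with the positivity constraints $a_i^2\ge -2$ and $(a_i,a_j)\ge 0$; the reduction of totally semistable walls to a minimal class via reflections in effective spherical classes; the identification of the three divisorial types (Brill--Noether for a spherical $s$ with $(s,v)=0$, Hilbert--Chow for isotropic $w$ with $(w,v)=1$, Li--Gieseker--Uhlenbeck for $(w,v)=2$) by dimension counts and a Fourier--Mukai transform to the K3 surface $M_{\sigma_0}(w)$; and the codimension-$\ge 2$ estimate that makes the remaining contractions small. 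You have also correctly located the hard part, namely the totally semistable analysis.

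Two caveats. First, a concrete inaccuracy: the minimal class $v_0$ is obtained from $v$ by reflections in spherical classes, which are isometries, so $v_0^2=v^2>0$ always; $v_0$ is never spherical or isotropic, contrary to the trichotomy you propose in your second step. The spherical and isotropic classes of $\calH$ enter as Mukai vectors of the destabilizing Jordan--H\"older factors (and of the objects used to build the generic semistable object by extensions), not as candidates for the minimal class itself; in the totally semistable case one has $v_0\ne v$ but still $v_0^2=v^2$. Second, what you have written is a roadmap rather than a proof: each of your ``dimension counts'' occupies several pages in \cite{BMMMPwallcrossing} (Sections 6--10), and the independence of the contraction type from the chosen chain of spherical reflections --- which you rightly flag as the main difficulty --- is exactly where the substantive work lies.
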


\begin{remark}\label{RmkWallTypesWallPositions}
    Following \cite[Lem. 10.1]{BMMMPwallcrossing}, the type of a wall is directly related to the position of its image by (\ref{MapStabToNS}) in $\overline{\Mov}(M_\sigma(v))$: the image $l(\calW)$ lies on the boundary of $\Mov(M_\sigma(v))$ if and only if it is divisorial. On the other hand, $\calW$ is either a flopping or fake wall if its image lies in the interior of $\Mov(M_\sigma(v))$ (it is fake if and only if its image lies in the interior of a chamber corresponding to a birational model of $M_\sigma(v)$).
\end{remark}
	
	\subsection{Reflection in the vertical wall}\label{SectionVerticalReflections}

        Set $M\coloneqq  M_\sigma(S,v)$ for some K3 surface $S$ with arbitrary Picard rank and $\sigma\in\Stab^\dagger(S)$ be a $v$-generic condition. Denote
        $$v=(r,\theta,s)$$
        with $r,s\in \matZ$ and $\theta\in\NS(S)$. For convenience, we denote $\theta=cD$ with $c\in\matZ$ and $D$ primitive. We will freely use the Mukai isometry (\ref{MukaiIsoH2}) to identify $H^2(M,\matZ)$ with $v^\perp\subset \widetilde{H}(S,\matZ)$.

        If $\Phi:\D^b(S)\xrightarrow{\sim} \D^b(S)$ is an autoequivalence, the induced isometry $\varphi\in \ho(\widetilde{H}(S,\matZ))$ on the extended Mukai lattice gives an isomorphism
        $$M_\sigma(v) \xrightarrow{\sim} M_{\Phi(\sigma)}(\varphi(v)).$$
        An additional useful observation is that by Theorem \ref{ThmBMAllBirModelAreModuliStab} (and assuming $\Phi(\sigma)\in\Stab^\dagger(S)$), for any $\tau\in\Stab^\dagger(S)$ we have a birational map
        $$M\simeq M_{\Phi(\sigma)}(\varphi(v)) \overset{\sim}{\dashrightarrow} M_{\tau}(\varphi(v)). $$    
        In particular, it holds for $\tau$ in the Gieseker chamber, whenever it exists.

        We will use freely the following three types of autoequivalences:
        \begin{itemize}
            \item The shift $[-1]$, which acts as $-\Id$ on $\widetilde{H}(S,\matZ)$.
            \item The tensor product $-\otimes L$ for $L\in\Pic(S)$, which acts as cup product with $\exp(L)=(1,L,L^2/2)$ on $\widetilde{H}(S,\matZ)$.
            \item The spherical twist $T_{\calO_X}$, which acts as the reflection $R_{(1,0,1)}$ in the hyperplane orthogonal to the spherical class $(1,0,1)\in \widetilde{H}(S,\matZ)$.
        \end{itemize}
    From \cite[Lem. 7.2, Prop. 7.6]{HartmannCuspsKaehlerModuliStabCondK3}, these autoequivalences preserve the distinguished component $\Stab^\dagger(S)\subset \Stab(S)$. In particular, applying some of these equivalences we assume $r>0$ from now on. 
        \newline

        Consider the geometric stability conditions of the form $\sigma=\sigma_{\omega,\beta}$ with $\omega\in\Amp(S), \beta\in \NS(S)_\matR$
        defined by (\ref{DefGeometricStabCond}) with $\alpha=B=0$.
        \begin{definition}\label{def:verticalwall}
            We call the potential wall $\calW\subset \Amp(S)\times\NS(S)_\matR$ (seen as a subspace of $\Stabd(S)$) defined by the equation
            $\Im Z_{\omega,\beta}(v)=0$
            the \textsl{vertical wall} of $S$ associated to $v$.
        \end{definition}
       
        In other words, $\calW$ is spanned by the set of $(\omega,\beta)$ satisfying the equation $\omega\cdot\theta = r\omega\cdot \beta$. Consider the corresponding hyperbolic rank $2$ lattice
        \begin{equation}\label{eq:hyperbolicW}
            \calH_{\calW}= \{w=(\lambda,(\lambda/r)\theta,\mu), \ \lambda,\mu \in \matZ, \ r\mid c\lambda \},
        \end{equation}
        i.e. the saturation of the sublattice spanned by $v$ and $(0,0,1)$, and note that $Z_{\omega,\beta}(w)\in \matR$ for all $(\omega,\beta)\in\calW$ and $w\in\calH_\calW$.

        Using \cite[Lem.\ $9.2$]{BMProjBirBridgelandModuli}, it is easy to see that the map $l$ as in (\ref{MapStabToNS}) sends any stability condition $\sigma_{\omega,\beta}\in\calW$ (up to a constant) to the vector $a_{\omega,\beta}=(0,\omega,\beta\cdot \omega)=(0,\omega,\omega\cdot\theta/r)$. These vectors form a cone $C_\calW$ of codimension one (i.e.\ a wall) in the positive cone $\mathcal{C}_M \subset \NS(M)_\matR\simeq v^\perp_\matR\subset \widetilde{H}^{1,1}(S)_\matR$.
               In fact, one of the chambers adjacent to $C_\calW$ is the Gieseker chamber, which is a consequence of the \textsl{large volume limit}. More precisely, as proven in \cite[Prop. 14.1]{BridgelandStabCondK3Surfaces}, objects that become $\sigma_{n\omega,\beta}$-semistable for $\sigma_{n\omega,\beta}$ close enough to (one side of) the vertical wall and for all $n$ big enough are exactly (shifts of) Gieseker-semistable sheaves. Moreover, computations in \cite[Thm. 3.11]{MaciociaComputingWalls} show that there is no wall-crossing for $n\gg0$, so that we can choose an uniform $n$ for all objects.

        We consider the reflection in this wall. To do so, consider the vector $e\coloneqq (r,\theta,(\theta^2/r)-s)$. It is orthogonal both to $v$ and $a_{\omega,\beta}$ for all $(\omega,\beta)\in \calW$. In particular, $e$ is a generator of the line orthogonal to the wall $C_\calW\subset \Mov(M)$.
        Finally, note that $e^2=-v^2=2-2n$. We now use Markman's criterion in Proposition \ref{PropMarkmanConditionReIsMonodromyOp} to check when the reflection $R_e$ is a monodromy operator.
        
       \begin{lemma}\label{LemmaReInMon2conditions}
        The reflection $R_e$ belongs to $\Mon(M)$ with $R_e\mid_{A_M}=-\Id$ if and only if
         \[
   r\mid 2c  \ \text{ and } \ \gcd(r,s)=1 \text{ or } 2.\label{conditionrHighRank} \tag{$\ast$} 
   \]
        \end{lemma}
        
        \begin{proof}

Note that if $R_e \in \Mon(M)$ and $R_e|_{A_M}=-\Id$, then $e$ is an integral primitive class. Indeed, write $e=ke_0$ for some $k \in \matQ$ and $e_0$ integral primitive. Then, $R_e=R_{e_0}$ and Proposition \ref{PropMarkmanConditionReIsMonodromyOp} gives $e_0^2=2-2n=e^2=k^2e_0^2$, hence $k=\pm 1$. Thus in fact, $R_e \in \Mon(M)$ and $R_e|_{A_M}=-\Id$ if and only if $e$ is an integral primitive class and $n-1|\dive_{H^2(M,\matZ)}(e)$.
Integrality of $e$ is equivalent to $r\mid c^2D^2$ and primitivity is equivalent to $\gcd(r,c,c^2D^2/r-s)=1$. Therefore, it is enough to show that (\ref{conditionrHighRank}) is equivalent to the numerical conditions $r\mid c^2D^2$, $\gcd(r,c,c^2D^2/r-s)=1$ and $n-1\mid \dive_{H^2(M,\matZ)}(e)$.

First, since $D^2$ is even, we have (\ref{conditionrHighRank}) implies $r\mid cD^2$ (that is, $e$ integral), and since $\gcd(r,c,s) =\gcd(r, c, cm - s)$ for $m=\frac{cD^2}{r}$ and $v$ is primitive, we also have that $e$ is primitive. 

We claim that once $e$ is integral and primitive, Condition (\ref{conditionrHighRank}) is equivalent to the condition $n-1\mid \dive_{H^2(M,\matZ)}(e)$. 
To see the claim, recall that integrality of $e$ implies $r\mid c^2D^2$ and for what follows, we set $c^2D^2 = kr$ for some $k\in\matZ$. 
Pick some $w = (a, \delta, f) \in v^\perp \subset \widetilde{H}(S,\matZ)$ for $a, f \in \bbZ$ and $\delta\in H^2(S,\bbZ)$.
Recall that there is an inclusion $H^2(S,\matZ) \hookrightarrow \NS(S)^\vee \oplus T(S)^\vee$. Therefore, we can assume that $w$ has the shape
$$w=(a,b\dfrac{L}{\dv(L)}+T,f).$$
for some integers $a,b$ and classes $L\in\NS(S)^\vee$ and $T\in T(S)^\vee$. Here and in the following $\dv(L)\coloneqq\dive_{\NS(S)}(L)$.
The condition $w\in v^\perp $ becomes
\begin{equation}
    bc\dfrac{L\cdot D}{\dv(L)}=as+rf
    \label{eq:winvperp}
\end{equation} and a direct computation gives
$(w,e)=\dfrac{2a}{r}(1-n)$. Therefore,
the condition $(n-1)|(w,e)$ is equivalent to $r|2a$. We will first assume this holds for all $w$ and prove that it implies (\ref{conditionrHighRank}).

Choose a basis $\{L_1,\dots,L_m\}$ of $\NS(S)$, where $m=\rk \NS(S)$. Write $D=\sum a_jL_j$. For each $k=1,\dots,m$, consider 
$$K_k\coloneqq \bigcap_{j\neq k}(L_j)^\perp_\matQ \subset \NS(S)_\matQ$$
which is a non-trivial vector subspace. Let $A_k\in K_k$ non-trivial. Taking some multiple of $A_k$, we can assume $A_k\in \NS(S)$.
Then for each $E=\sum b_jL_j\in \NS(S)$, we have $E\cdot A_k= b_kL_k\cdot A_k$. In particular, $\dv(A_k)=A_k\cdot L_k$. Moreover, $D\cdot A_k=a_k L_k\cdot A_k$. We obtain
$$\frac{D\cdot A_k}{\dv(A_k)}=\frac{a_k L_k\cdot A_k}{L_k\cdot A_k}=a_k.$$
To show $r\mid 2c$, we pick $w_k=(ca_k-r, s\dfrac{A_k}{\dv(A_k)}+T,s)\in v^{\perp}$. By the assumption $r\mid 2a$, for each $k$, we get $r\mid 2(ca_k-r)$. Since $D$ is primitive, $\gcd(a_1,\dots,a_m)=1$ and hence we get $r\mid 2c$.

Now, write $g=\gcd(r,s)$ and pick $w=(r/g,2D,(2c/r)D^2-s/g)$. We get $w\in v^\perp$ and the condition $r\mid 2a$ becomes $2r = grm$ for some $m\in \bbZ$. Therefore, $g\mid 2$ and we have (\ref{conditionrHighRank}).

Conversely, assume (\ref{conditionrHighRank}) holds and take $w=(a,b\frac{L}{\dive(L)}+T,f)$ as before. From (\ref{eq:winvperp}) we have
\begin{eqnarray}\label{Eq2asFinal}
2as=r\left( \dfrac{2c}{r}\dfrac{bD\cdot L}{\dv(L)}-2f\right).
\end{eqnarray}
Now if $g=1$, then $r\mid 2a$. If $g=2$, primitivity of $v$ implies $c$ odd, and $r\mid 2c$ implies $r/2$ odd. Dividing both sides of equation (\ref{Eq2asFinal}) by $2$ yields $r/2 \mid 2a(s/2)$, so $r/2 \mid 2a$ so $r/2\mid a$ and therefore $r\mid 2a$.

\end{proof}

From now on, we assume that $v$ satisfies (\ref{conditionrHighRank}). Since $R_e\in \MonH(M)$, by \cite[Thm. 1.6]{Mar11}, it decomposes uniquely (uniqueness is a consequence of the injectivity of $\Bir(M) \to \ho(H^2(M,\matZ))$) as 
 $R_e= \psi\circ g^*$,
with $g\in \Bir(M)$ a symplectic birational self-map and $\psi\in W_\text{Exc}$.

     \begin{proposition}\label{prop:walldivisorialorflopping}
         The potential wall $\calW$ is either a divisorial or a flopping wall. 
     \end{proposition}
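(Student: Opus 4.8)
The plan is to apply Markman's numerical criterion (\cref{ThmLatticeTypeWall}) to the hyperbolic rank $2$ lattice $\calH_\calW$, which is the saturation of the sublattice spanned by $v = (r, cD, s)$ and $w_0 = (0,0,1)$. First I would compute the restriction of the Mukai pairing to $\calH_\calW$: we have $v^2 = 2n-2 \geq -2$ (in fact $>-2$ by primitivity considerations, but let us keep it general), $(v, w_0) = -r < 0$ (recall we arranged $r>0$), and $w_0^2 = 0$. So $w_0$ is an isotropic class in $\calH_\calW$ with $(w_0, v) = -r$. In order to use part (1) of \cref{ThmLatticeTypeWall}, I would look for an isotropic class in $\calH_\calW$ pairing with $v$ to $\pm 1$ or $\pm 2$; if $r = 1$ or $2$, the class $w_0$ itself does the job (up to sign) and $\calW$ is divisorial.

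For the general case, the key observation is that $\calH_\calW$ contains the class $e = (r, cD, (cD)^2/r - s)$, and by the hypothesis (\ref{conditionrHighRank}) this class is integral and primitive (as recorded in \cref{RmkrDividescDsquare} and the proof of \cref{LemmaReInMon2conditions}), with $e^2 = -v^2 = 2-2n \leq -2$. If $n \geq 2$ then $e$ is a class with $e^2 \leq -2$ and $(e, v) = (cD)^2 - s\cdot 0 \cdot \dots$ — more precisely a direct computation gives $(e,v) = 0$? No: I should instead observe that $e$ is orthogonal to $v$ only after the reflection setup; let me recompute. Actually $e = v + (0,0,(cD)^2/r - 2s)$ lies in $\calH_\calW$ and $(e,v) = v^2 + ((cD)^2/r - 2s)(v, w_0)$. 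The cleaner route: since $a_{\omega,\beta} = (0,\omega, \omega\cdot\theta/r) \in \calH_\calW$ is isotropic (its Mukai square is $0$ as $r=0$ there) and satisfies $(a_{\omega,\beta}, v) = 0$ — wait, that would make it a class $s$ with $(s,v)=0$ but $s^2 = 0 \neq -2$, so part (1) via the ``$s^2=-2$, $(s,v)=0$'' route does not directly apply.

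So the real content is: I must exhibit \emph{either} an isotropic class $w \in \calH_\calW$ with $(w,v) \in \{1,2\}$ (divisorial case) \emph{or} a spherical class $s \in \calH_\calW$ with $0 < (s,v) \leq v^2/2$, \emph{or} a decomposition $v = a_1 + a_2$ with $a_i \in \calH_\calW$, $a_i^2 \geq 0$, $(a_i, v) > 0$ (flopping case), and rule out the ``fake/not a wall'' case. The cleanest argument: $\calH_\calW$ is hyperbolic and contains the isotropic class $w_0 = (0,0,1)$ with $(w_0, v) = -r$, and also the isotropic class $w_1 := v - s\, w_0 \cdot (\text{appropriate})$... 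Here is the genuinely clean statement: $\calH_\calW$ has a basis consisting of two isotropic vectors $w_0=(0,0,1)$ and $w_\infty := (\rho, (\rho c/r) D, \ast)$ for the minimal $\rho$ with $r \mid c\rho$, or rather one uses that any rank $2$ hyperbolic lattice containing an isotropic vector of divisibility... I think the right move is: reduce via the autoequivalences $-\otimes L$ and $T_{\calO_S}$ (as already invoked in \S\ref{SectionVerticalReflections}) and Fourier--Mukai transforms so that the gcd conditions simplify, then directly name the class realizing divisorial or flopping. The \textbf{main obstacle} I anticipate is precisely the bookkeeping: showing that under (\ref{conditionrHighRank}) one always lands in case (1) or (2) of \cref{ThmLatticeTypeWall} and never in case (3). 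I would handle it by a case split on whether $e$ (equivalently the wall $C_\calW$) lies on the boundary of $\Mov(M)$ — which by \cref{RmkWallTypesWallPositions} is exactly the divisorial case — versus the interior; in the interior, $C_\calW \subset \Mov(M)^\circ$ forces $\calW$ to be flopping or fake, and the fake case is excluded because $R_e$ acts as $-\Id$ on $A_M$ (hence is not in $\MonB(M)$ after composing with any birational model, by the Torelli argument given just above the proposition). Combining, $\calW$ is divisorial or flopping, as claimed; alternatively one checks numerically that the class $s = (1, 0, 1)$ or a $\Phi$-transform of it, resp. a suitable isotropic class, lies in $\calH_\calW$ with the required pairing, which I expect to be a short but fiddly computation once the autoequivalence reductions are in place.
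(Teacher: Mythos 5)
Your argument is correct in substance, but it is a genuinely different route from the paper's own proof of \cref{prop:walldivisorialorflopping} --- in fact, the strategy you finally settle on (locate $C_\calW$ in $\overline{\Mov}(M)$ via \cref{RmkWallTypesWallPositions}, conclude divisorial on the boundary and flopping-or-fake in the interior, then exclude the fake case because $R_e$ acts as $-\Id$ on $A_M$ while a monodromy operator preserving a nef chamber would come from a biregular automorphism and hence act trivially on $A_M$) is precisely the argument the paper gives in the paragraph immediately \emph{preceding} the proposition. The proposition is then explicitly framed as an independent, purely numerical verification of that conclusion (``One can also directly check this using the numerical criteria\dots''), and its actual proof is exactly the computation you defer as ``short but fiddly'': write $v=w+t$ with $w=v+(0,0,1)$ and $t=(0,0,-1)$, so that $w^2=v^2-2r$, $t^2=0$, $(w,v)=v^2-r$, $(t,v)=r$; since $r\mid v^2$ by condition (\ref{conditionrHighRank}) and \cref{LemmaReInMon2conditions}, either $v^2\ge 2r$, in which case this decomposition rules out case (3) of \cref{ThmLatticeTypeWall} and $\calW$ is divisorial or flopping, or $v^2=r$, in which case $v$ is computed to have the shape of the last divisorial family in \cref{PropConditionVertDivisorialWall}. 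What the numerical proof buys is independence from the global Torelli theorem and the decomposition of $\MonH(M)$ into $W_{\text{Exc}}\rtimes\MonB(M)$, plus the explicit witnesses that feed into the finer classification of \cref{PropConditionVertDivisorialWall}; what your route buys is brevity, at the cost of merely re-invoking the argument already on the page rather than supplying the independent check the proposition is meant to provide. Two smaller points: your false starts resolve easily ($(e,v)=0$ does hold since $e\in v^\perp$, and $a_{\omega,\beta}$ is isotropic but orthogonal to $v$, so neither class is usable in \cref{ThmLatticeTypeWall}), and if you want the ``numerical alternative'' to stand on its own you must actually exhibit the decomposition or the witnessing spherical/isotropic classes, which is the entire content of the paper's proof.
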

     
     \begin{proof}
     
     All we have to prove is that $\calW$ is an actual non-fake wall. If $\calW$ is a fake wall, or not an actual wall, its image $C_\calW$ would lie inside of the interior of a chamber corresponding to the image of the nef cone of a birational model $f:M\dashrightarrow M'$. In particular, $C_\calW$ lies in the interior of $\Mov(M)$ and therefore $\psi=\Id$: indeed, one can find $D\in \Mov(M)$ such that $(g^{-1})^*(D)$ lies close to $C_\calW$. Then we have $R_e\circ (g^{-1})^*(D)=\psi(D)\in \Mov(M)$. This is only possible when $\psi=\Id$ since $\Mov(M)$ is a fundamental domain for the action of $W_{\Exc}$ and $W_\Exc$ acts faithfully (\cite[Lem. 6.22]{Mar11}). We get that $R_e=g^*$ preserves $f^*\Nef(M')$. The Torelli theorem then implies that the map $f\circ g\circ f^{-1}$ extends to an isomorphism on $M'$ (see \cite[Prop. $3.15$]{DebarreHKMan}), which renders $g^*|_{A_M}=\Id$  \cite[Thm.\ 26]{Mon16} contradicting our assumption.

     \end{proof}

Using Theorem \ref{ThmLatticeTypeWall} we can also deduce under condition (\ref{conditionrHighRank}) when $R_e$ does not come from a birational involution. In other words, we give a necessary and sufficient numerical criterion for when $\sW$ is a divisorial wall.

    \begin{proposition}\label{PropConditionVertDivisorialWall}
    Under the condition (\ref{conditionrHighRank}), the vertical wall as in Definition \ref{def:verticalwall} is a divisorial wall if and only if one of the following cases occurs.
    \begin{itemize}
        \item $r=1, r=2$ or $v = (r, krD, \dfrac{D^2k^2r }{2}-m)$ for some $k\in\matZ$ and  $m = 1$ or 2.
        \item  $r>2$, $r\nmid c$, and one of the two possibilities occurs.
        \begin{enumerate}
    	\item $D^2\equiv 0 \pmod{4}$, $v=(2a,maD,\dfrac{D^2m^2a}{4}-1)$ 
    	for some integers $a\geq 2$, $m$ odd. 
    	\item $D^2\equiv 2 \pmod{4}$, $v=(2a,m'aD,\dfrac{D^2m'^2a-2}{4})$  
    	for some integers $a\geq 3$ odd, $m'$ odd.
        \end{enumerate}

    \end{itemize}
    \end{proposition}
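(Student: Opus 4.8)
The plan is to push the numerical classification of walls from \cref{ThmLatticeTypeWall} through the rank-two lattice $\calH_\calW$, and then unwind the resulting arithmetic under hypothesis~(\ref{conditionrHighRank}). Since \cref{prop:walldivisorialorflopping} already tells us that $\calW$ is divisorial or flopping, \cref{ThmLatticeTypeWall}(1) becomes an equivalence: $\calW$ is divisorial precisely when $\calH_\calW$ contains a $(-2)$–class orthogonal to $v$, or an isotropic class $w$ with $(w,v)\in\{1,2\}$. So the whole content is to detect, lattice-theoretically, when one of these two alternatives holds.

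First I would put coordinates on $\calH_\calW$. With $d=\gcd(r,c)$, $\rho=r/d$, $\gamma=c/d$, the classes $a=(\rho,\gamma D,0)$ and $b=(0,0,1)$ form a $\matZ$–basis with Gram matrix $\begin{pmatrix}\gamma^2D^2 & -\rho\\ -\rho & 0\end{pmatrix}$; hence $\disc\calH_\calW=-\rho^2$, $v=da+sb$, $v^2=2d(d\gamma^2(D^2/2)-s\rho)$, and $r\mid c$ is exactly $\rho=1$ (then $\calH_\calW\cong U$). Because $v^2=2n-2>0$ and $\calH_\calW$ has signature $(1,1)$, the lattice $v^{\perp}\cap\calH_\calW$ is negative definite of rank one, with primitive generator $s_0$ of square $s_0^2=-\rho^2v^2/p^2$ where $p=\gcd(d\rho,\,d\gamma^2D^2-s\rho)$; and $\calH_\calW$ has exactly two isotropic lines, $\matZ b$ with $(b,v)=-r$, and $\matZ w_2$ with $w_2=\rho_1a+\gamma^2\ell_1b$ and $(w_2,v)=\rho_1v^2/(2d)$, where $\ell=D^2/2$, $\rho_1=\rho/\gcd(\ell,\rho)$, $\ell_1=\ell/\gcd(\ell,\rho)$. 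Thus divisoriality is equivalent to the alternative: $\rho^2v^2=2p^2$, or $r\in\{1,2\}$, or $\rho_1v^2/(2d)\in\{1,2\}$.

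Next I would feed in (\ref{conditionrHighRank}), using the arithmetic of that condition as already unwound in \cref{LemmaReInMon2conditions} and \cref{RmkrDividescDsquare} (notably $r\mid cD^2$, $\gcd(r,s)\in\{1,2\}$, and primitivity of $v$). This shows that the first alternative only ever produces $v^2\in\{2,8\}$, which will be absorbed into the cases below, and that testing (\ref{conditionrHighRank}) against suitable $L\in\NS(S)$ forces $\rho\in\{1,2\}$ whenever the wall is divisorial. For $\rho=1$ one is left with $r\le 2$ or $v^2\in\{2r,4r\}$, and solving the quadratic expression for $v^2$ in terms of $s$ — after an autoequivalence $-\otimes L$ normalising $c$ — recasts the second option as the series of Mukai vectors in the first bullet. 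For $\rho=2$ the class $b$ contributes nothing ($r>2$), the $(-2)$–class contributes nothing new, and everything comes down to $\rho_1v^2/(2d)\in\{1,2\}$: the parity of $\ell=D^2/2$ makes $\rho_1$ equal $1$ or $2$ according to whether $D^2\equiv 0$ or $2\pmod 4$, which forces $v^2=2r$ respectively $v^2=r$; writing $r=2a$ and solving for $s$ — integrality of $s$ forcing $a$ arbitrary resp.\ odd, and $r\nmid c$ forcing $\gamma$ odd — produces exactly the two sub-series of the second bullet. The converse direction is then a direct check that each listed $v$ satisfies (\ref{conditionrHighRank}) and furnishes the required isotropic class; as a sanity check this also matches the reduction via autoequivalences of \S\ref{SectionMarkmanExamples} (bringing $v$ to rank $1$ or $2$, where $e$ is the class of the diagonal or of the non–locally free locus).

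The step I expect to be the main obstacle is the bookkeeping of the divisibilities $p$, $\rho_1$, $d$ against (\ref{conditionrHighRank}) in the regime $\rho\ge 2$: one has to track primitivity of $w_2$ and $s_0$ carefully, must genuinely rule out $\rho\ge 3$ (for Picard rank one this collapses to $r\mid 2c$, but in general it requires the full strength of (\ref{conditionrHighRank}) tested on enough classes $L$), and must separate the vectors that drop out merely because $e=(r,\theta,\theta^2/r-s)$ fails integrality or primitivity from those for which $\calW$ is truly a flopping wall. Threading these divisibility constraints so that precisely the stated families survive, with the stated congruence conditions on $D^2$ and the parity constraints on $a$, is where the real work lies.
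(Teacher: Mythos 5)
Your proposal is correct and follows essentially the same route as the paper's proof: both reduce via \cref{ThmLatticeTypeWall} to classifying, inside $\calH_\calW$, the spherical classes orthogonal to $v$ and the isotropic classes pairing to $1$ or $2$ with $v$, and your closed-form primitive generators $b$, $w_2$, $s_0$ just repackage the paper's case analysis on the rank $\lambda$ of $w$ (your three numerical conditions, and the outcomes $v^2\in\{2r,4r\}$ for $r\mid c$ and $v^2=2r$ resp.\ $v^2=r$ for $\rho=2$, match the paper's computations exactly, and your framework does rule out $\rho\geq 3$ via $\rho_1\rho\leq 4$ plus integrality of $(w_2,v)$). One caveat worth recording: your conclusion $s=\tfrac{D^2k^2r}{2}-m$ in the $r\mid c$ case agrees with the paper's own proof (Step 2) and with Theorem~\ref{MainThm2}, but not with the literal formula $s=\tfrac{D^2k^2r-m}{2}$ printed in the proposition, which appears to be a typo --- for instance $v=(3,3D,1)$ with $D^2=2$ admits the isotropic class $w=(1,D,1)$ with $(w,v)=2$, so its vertical wall is divisorial, yet it is omitted by the printed formula.
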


    \begin{proof}[{Proof of Proposition \ref{PropConditionVertDivisorialWall}}]

Note that any Mukai vector $w=(\lambda,(\lambda c/r)D,\mu)\in\calH_\calW$ satisfies the following formulas, which we use in the proof:
    \begin{eqnarray}
    & & w^2 = \lambda^2\dfrac{D^2c^2}{r^2}-2\lambda\mu \label{w2} \\
    & & (w,v)=\lambda\dfrac{D^2c^2}{r}-r\mu-s\lambda \label{wv}.
    \end{eqnarray}

    By Theorem \ref{ThmLatticeTypeWall} it is enough to give a complete classification of Mukai vectors $v$, for which there exists an isotropic $w\in \sH_{\sW}$ satisfying $(w, v) = 1$ or 2 or a class $w\in\sH_{\sW}$ with $w^2 = -2$ satisfying $(w,v) = 0$. To this end, we first prove that if such a $w$ exists, then it imposes restrictions on $v$, leaving us with the above list in the statement. In the process, we also obtain necessary values of $\lambda$ and $\mu$ for each instance of $v$, proving the existence of such $w$'s in the converse direction.   
    \newline
    
    \hspace{1cm}  \textbf{Step 1:} Assume $r=1$ or $2$.
    
    Then $w=(0,0,-1)$ satisfies $w^2=0$ and $(w,v)=1$ or $2$, so the vertical wall is divisorial.
    \newline
    
            \hspace{1cm} \textbf{Step 2:} Assume $c=kr$ for some $k\in\matZ$.

    Let $w=(\lambda,\lambda \dfrac{c}{r}D,\mu)$ as above. If $\lambda=0$, then $w^2=0$ and $(w,v)=\mu r = 1$ or $2$ if and only if $r=1,2$. Hence we assume $\lambda\neq 0$.

    Equations (\ref{w2}) and (\ref{wv}) become 
    \begin{eqnarray*}
    w^2&=&\lambda^2 D^2k^2-2\lambda\mu \\
    (w,v)&=&\lambda D^2 k^2 r - r\mu - s\lambda
    \end{eqnarray*}
As discussed above there are two ways, $\sW$ could be a divisorial wall for $v$: 
    
    $\bullet $ either such a $w$ satisfies  $w^2=0$ and $(w,v) = 1,2$. Then $\lambda D^2k^2=2\mu$. Hence
    \begin{eqnarray*}
    (w,v) &=& \lambda D^2k^2r-r\mu -s\lambda \\
    &=& \lambda D^2k^2r - r\lambda k^2 (D^2/2) -s\lambda \\
    &=& \lambda((D^2/2)k^2r -s).
    \end{eqnarray*}
   Now $(w,v)=1$ implies $\lambda=\pm 1$ and $(w,v)=2$ implies $\lambda=\pm 1,\pm 2$. It is direct to see that $\lambda<0$ leads to $v^2<0$, which is absurd (recall we assumed $v^2\geq 2$).     Hence, we obtain $s=(D^2/2)k^2r -m$ with $m=1$ or $2$. Hence $v$ is forced to be of the form $(r, krD, \dfrac{D^2k^2r }{2}-m)$ in this case. Conversely, for any such $v$, the isotropic vector $w = (1, kD, \frac{D^2k^2}{2})$ satisfies $(w,v)=m$.

    $\bullet$ Or, such a $w$ satisfies $w^2=-2$ and $(w,v) = 0$, i.e.\ $\lambda^2 D^2k^2=2\lambda\mu-2$, so we obtain again $\lambda=\pm 1$ or $\pm 2$. 
    In fact, $\lambda=\pm 2$ is impossible because we would obtain $4D^2k^2=\pm 2(2\mu -1)$. Hence $\lambda=\pm 1$ and
    $$(w,v)=0 \iff  \pm D^2k^2r=r\mu\pm s, $$
    so $r|s$ which implies that $r=1$ since $v=(r,krD,s)$ is primitive. So we are back to Step 1.
    \newline

    \hspace{1cm} \textbf{Step 3:} General case.

    By Condition (\ref{conditionrHighRank}), we can assume that $cD^2=kr$ and $c\lambda=mr$ for some $k,m$. Note that as in Step 2, when $\lambda=0$ the wall is divisorial only when $r=1$ or $2$, and $\lambda =\pm 1$ gives $r\mid c$, hence these cases are already treated in Step $1$ and $2$. Therefore we assume $|\lambda|\geq 2$.
    Equalities (\ref{w2}) and (\ref{wv}) give:
    \begin{eqnarray*}
     w^2&=&\lambda m k - 2 \lambda \mu \\
    (w,v)&=&rmk - r\mu -s\lambda
    \end{eqnarray*}

    First, assume $w^2=-2$. Then $\lambda=\pm2$. The wall is divisorial in this case, if and only if $(w,v) = 0$. Hence the second equality implies $r\mid 2s$, which gives $r=1$ or $2$ and we are back to Step $1$.
    
    Now assume $w^2=0$, that is $mk=2\mu$ and hence $(w,v) = r\mu - s\lambda$. 
    Since $2\mu r = mrk = \lambda ck$, we obtain
    \begin{eqnarray*}
    2(w,v) = \lambda(ck-2s).
    \end{eqnarray*}
    Assume $(w,v)=1$. Then $ck-2s=\pm 1$, in particular $c,k$ are odd. If $r$ is even then $ck=\pm \mu r$ is even, which leads to a contradiction. Since by Condition (\ref{conditionrHighRank}) $r\mid 2c$, if $r$ is odd, we obtain $r\mid c$, so we are done by Step $2$.
    
    The last case is $(w,v)=2$, which gives $\lambda=\pm 2$ or $\pm 4$. Note that $\lambda<0$ is impossible, since it would give $ck-2s<0$, but $v^2=r(ck-2s)\geq 0$. 
    
    First, let $\lambda=2$. We obtain $ck-2s= 2$. Since by Condition (\ref{conditionrHighRank}) $r\mid 2c$, we can assume $r=2a$ for some $a\geq 2$ (otherwise $r\mid c$ and we are back to Step $2$), therefore $c=ma$ with $m$ odd and $k=mD^2/2$. We obtain
    $$s=\dfrac{ck}{2} - 1 = \dfrac{D^2}{4}m^2 a  -1.$$
    Note that $\mu=mck/r=m^2aD^2/4a=m^2D^2/4$, in particular $D^2\equiv 0 \pmod{4}$. For any such choice of $a$ and $m$, the integral vector $w=(2,mD,m^2D^2/4)$ gives $(w,v)=2$. 
    
    The situation with $\lambda=4$ is similar: we get $ck-2s=1$, in particular $c,k$ are odd. As before we can assume $r=2a$ for some $a$, but moreover, if $a$ is even, we get $2\mu\mid ck$, which is impossible. Therefore $a\geq 3$ is odd, $m=2m'$ is even (with $m'$ odd), $c=m'a$, $k=m'D^2/2$ and
    $$ s=\dfrac{m'^{2}a(D^2/2) -1}{2},$$
    which in particular implies that $D^2\equiv 2 \pmod{4}$. For any such choice of $a$ and $m'$, the integral vector $w=(4,2m'D,\dfrac{D^2}{2}m'^{2})$ gives $(w,v)=2$.

    \end{proof}
    
    We conclude that whenever $v$ does not satisfy the condition of the previous proposition, $\calW$ is a flopping wall and $R_e\in \MonB(M)$. Proposition \ref{PropConditionVertDivisorialWall} together with Proposition \ref{prop:walldivisorialorflopping} gives a proof of Theorem \ref{MainThm2}.

    \begin{proof}[\bf Proof of Theorem \ref{MainThm2}]
     The reflection $R_e$ decomposes as $R_e=\psi\circ g^*$ with $g\in \Bir(M)$ is a symplectic birational self-map and $\psi\in W_\text{Exc}$. In view of Proposition \ref{prop:walldivisorialorflopping}, all we have to show is that $\psi=\Id$ exactly when $\calW$ is a flopping wall. We use Remark \ref{RmkWallTypesWallPositions}. If $\calW$ is divisorial, then it lies on the boundary of $\Mov(M)$. From \cite[Thm. 6.17 and Lem. 6.22]{Mar11}, $\calW=E^\perp$ for some prime exceptional divisor $E$, in particular $R_e=\psi$ lies in $W_\Exc$ and $g=\Id$. On the other hand, when $\calW$ is a flopping wall, it lies in the interior of $\Mov(M)$. The same argument as in Proposition \ref{prop:walldivisorialorflopping} shows that in this case $\psi=\Id$, i.e. $R_e=g^*$. The equality $g^*|_{A_M}=-\Id$ is a consequence of Proposition \ref{PropMarkmanConditionReIsMonodromyOp}.
    \end{proof}
    
    Given a Mukai vector $v$ as in Theorem \ref{MainThm2}, it would be interesting to know an explicit geometric description of the birational involution $g$. We discuss in \S \ref{SectionMarkExStabCond} one such explicit case.

    \subsection{Reflection along any wall}\label{SectionReflectionAnyWall}

    In this subsection, $\calW$ is any wall in $\Stab(M)$ that induces a monodromy operator with non-trivial action on the discriminant group $A_M$. The goal is to show that reflection along the image of $\sW$ under $l$ may be seen as reflection along the image of the vertical wall on an isomorphic model of $M$, as considered in the previous subsection. We need the following key construction.

    \subsubsection{Construction} \label{constructionFM} Consider a wall $\calW$ as above, and let $v=(r,\theta,s)$ as in \S \ref{SectionVerticalReflections}. Assume that the associated hyperbolic lattice $\calH_\calW$ contains a non-zero isotropic class $w$,  which implies that $\calH_{\calW}=\overline{\langle v,w \rangle}$. For such an isotropic class, we let $S' \coloneqq  M_H(S, w)$. We let $\Phi: \D^b(S) \to \D^b(S', \alpha)$, for some Brauer class $\alpha$ on $S'$, be the induced Fourier--Mukai transform with $\Phi(w)=(0,0,1)$, and set $v'=\Phi(v)$.
    Define $\sW'\subset \Stab(S')$ to be the induced wall for $\Phi(w)$, i.e.\ defined by the hyperbolic plane $\sH_{\sW'}=\overline{\langle \Phi(w), v'\rangle}$. Let $v' = (r', \theta', s')$. Pick $\sigma_0\in\calW$ and set $\tau_0\coloneqq \Phi(\sigma_0)$. Note that $\tau_0$ has the shape $\tau_0=\sigma_{\omega',\beta'}$ for some $\omega',\beta'\in\NS(S')_\matR$ (see \cite[Prop. 7.5]{HartmannCuspsKaehlerModuliStabCondK3} for $\alpha$ trivial, or \cite[Prop. 5.2]{HuybrechtsDerivedAbelianEquivK3Surf}). Since $\Phi(w) = (0,0,1)$ we get $\Im Z_{\omega', \beta'}(\Phi(w)) = 0$. Therefore $\Im Z_{\omega',\beta'}(v')=0$, i.e. $\sW'$ is the vertical wall of $S'$. 
    
    Moreover, $-r'=(v',(0,0,1))=(v,w)$. Composing the equivalence $\Phi$ with the shift $[1]$ we get for any $\sigma\in\Stabd(S)$ an isomorphism $$M=M_\sigma(S,v)\xrightarrow{\sim} M'=M_{\tau}(S',-v',\alpha)$$
    for $\tau\coloneqq \Phi(\sigma)[1]$ and the image of $\sW$ is the vertical wall of $S'$ with respect to $-v'$. Note that if $-r'=1$ then necessarily $\alpha$ is trivial:  this is because for any $\alpha\in\Br(S')$ and torsion-free $\alpha$-twisted sheaf $F$, the endomorphism algebra $\calE nd(F)$ (seen as an Azumaya algebra) represents the class $\alpha$ (see for instance \cite[Thm. 1.3.5]{CaldararuPhD}). When $F$ has rank $1$, $\calE nd(F)$ is the trivial bundle, and so $\alpha$ is trivial.
    
    An easy upshot of this construction is that it simplifies the Mukai vectors in  Proposition \ref{PropConditionVertDivisorialWall} significantly when we allow ourselves to choose a possibly different K3 surface $S$.
    
    \begin{corollary}\label{verticalwallonetwo}
       Up to replacing $S$ by a twisted Fourier--Mukai partner $(S',\alpha)$, under the condition (\ref{conditionrHighRank}) the only cases for which the vertical wall $\calW$ is divisorial are the cases $r=1,2$.
    \end{corollary}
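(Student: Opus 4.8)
The plan is to deduce this from \cref{PropConditionVertDivisorialWall} together with the Fourier--Mukai construction of \S\ref{constructionFM}. If $r=1$ or $2$ there is nothing to prove, and in fact the vertical wall is then divisorial, since $(0,0,-1)\in\calH_{\calW}$ is isotropic with $(v,(0,0,-1))=r\in\{1,2\}$ and \cref{ThmLatticeTypeWall}(1) applies. So assume the vertical wall of $(S,v)$ is divisorial with $r>2$. By \cref{ThmLatticeTypeWall}(1) applied to $\calH_{\calW}$, either $\calH_{\calW}$ contains a $(-2)$-class $\delta$ with $(\delta,v)=0$, or it contains an isotropic class $w$ with $(v,w)\in\{1,2\}$. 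I would first rule out the former by retracing the subcases $w^2=-2$ in the proof of \cref{PropConditionVertDivisorialWall} (Steps 2 and 3): the existence of such a $\delta$ forces $r\mid 2s$, and since $\gcd(r,s)\in\{1,2\}$ by (\ref{conditionrHighRank}), this gives $r\le 2$, contradicting $r>2$.

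Hence $\calH_{\calW}$ contains an isotropic class $w$ with $(v,w)\in\{1,2\}$, and after passing to the primitive class on its ray we may take $w$ primitive. Concretely, $w$ can be taken to be one of the isotropic classes already written down in the proof of \cref{PropConditionVertDivisorialWall}: $w=(1,kD,\tfrac{D^2k^2}{2})$ for the family $v=(r,krD,\cdot)$, $w=(2,mD,\tfrac{m^2D^2}{4})$ in case (1), and $w=(4,2m'D,\tfrac{D^2}{2}m'^2)$ in case (2). The only points to verify are that each lies in $\calH_{\calW}$, which follows from (\ref{conditionrHighRank}) together with \cref{RmkrDividescDsquare}, and that each is primitive — in the last two cases this is exactly where the hypotheses $D^2\equiv 0$ or $2\pmod 4$ and $m,m'$ (and $a$) odd are used, guaranteeing that the last coordinate, or else the middle one, is odd.

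Finally I would feed this primitive isotropic $w$ into \S\ref{constructionFM}. Since $w$ is primitive and isotropic, $S'\coloneqq M_H(S,w)$ is a K3 surface, the induced Fourier--Mukai transform $\Phi\colon\D^b(S)\to\D^b(S',\alpha)$ carries $\calH_{\calW}$ onto the hyperbolic plane defining the vertical wall of $S'$, and one obtains an isomorphism $M=M_\sigma(S,v)\xrightarrow{\ \sim\ }M_\tau(S',-v',\alpha)$ matching the vertical wall of $(S,v)$ with that of $(S',-v')$. The rank of the new Mukai vector $-v'$ is $-r'=(v',(0,0,1))=(v,w)\in\{1,2\}$ — already positive, so no further normalization of the vector is needed — and it equals $1$ precisely when $\alpha$ is forced trivial. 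Hence, after replacing $S$ by the (possibly twisted) Fourier--Mukai partner $(S',\alpha)$, the only remaining divisorial cases are $r=1,2$. I do not expect a real difficulty here, since the substance is already contained in \cref{PropConditionVertDivisorialWall}; the points requiring care are the primitivity of the isotropic classes — so that $S'$ is genuinely a surface, which is where the parity hypotheses enter — and keeping track that the transformed moduli space is governed by $-v'$, so that its rank is $+(v,w)\in\{1,2\}$ rather than lying in $\{-1,-2\}$.
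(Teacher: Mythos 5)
Your argument is correct and matches the paper's: both reduce to \cref{PropConditionVertDivisorialWall} and feed the explicit isotropic classes (with the same integrality/primitivity checks coming from the parity hypotheses) into the construction of \S\ref{constructionFM}, tracking that the new rank is $(v,w)\in\{1,2\}$. The only cosmetic difference is in the family $v=(r,krD,\cdot)$, where the paper applies the autoequivalences $\exp(-kD)$ and $R_{(1,0,1)}$ directly to land on $(m,0,-r)$ instead of invoking the Fourier--Mukai partner $M_H(S,(1,kD,k^2D^2/2))\cong S$; the two routes give the same reduction.
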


    \begin{proof}
        Consider the cases in Proposition \ref{PropConditionVertDivisorialWall}. First, when $c=kr$ and $s=(D^2/2)k^2r -m$ for $m=1$ or 2, replacing $v$ with its image $v'$ under the  composition of $(-)\cdot \exp(-kD)$ and $R_{(1,0,1)}$ gives $v'=(m,0,-r)$. It is straightforward to check that these isometries send the vertical wall associated to $v$ to the one associated to $v'$.
        For the case $v=(2a,maD,\frac{D^2}{4}m^2a-1)$, resp. $v=(2a,maD,\frac{D^2m^2a-2}{4})$, the vector $w=(2,mD,\frac{D^2}{4}m^2)$, resp. $(4,2mD,\frac{D^2}{2}m^2)$ is an isotropic class in $\calH_\calW$ and satisfies $(w,v)=2$. Therefore, the construction in \S \ref{constructionFM} above applies. The rank of the new Mukai vector is $-r'= (v,w) = 2$.
    \end{proof}

    Another implication of the construction above is the following.
    \begin{corollary}\label{CoroAllReflectionAreVertical}
        Assume that the reflection $R_e$ in the wall $l(\calW)$ orthogonal to the vector $e\in \NS(M)$ lies in $\MonH(M)$ and acts non-trivially on $A_M$. Then, up to replacing $S$ by a twisted Fourier--Mukai partner $(S',\alpha)$, $R_e$ is the reflection in the vertical wall. In particular, for a birational involution $g\in \Bir(M)$ with $g^*=R_e\in \ho(H^2(M,\matZ))$, up to a derived equivalence $g^*$ is the reflection in the vertical wall.
    \end{corollary}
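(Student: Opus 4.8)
The strategy is to reduce to \S\ref{constructionFM}: once we know that the hyperbolic lattice $\calH_\calW$ associated to $\calW$ contains a primitive isotropic class $w$, that construction produces a (possibly twisted) Fourier--Mukai partner $(S',\alpha)$ on which $\calW$ becomes the vertical wall and which sends $R_e$ to the reflection in it. So the crux is to produce $w$, and the point is that the hypothesis $R_e|_{A_M}\neq\Id$ forces $e^2=2-2n=-v^2$, which makes $v+e$ isotropic.

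First I would locate $e$ --- taken primitive in $\NS(M)=v^\perp$ --- inside $\calH_\calW$, and show $\calH_\calW=\overline{\langle v,e\rangle}$. Using the positivity lemma of Bayer--Macr\`i \cite{BMMMPwallcrossing}, the map $l$ of (\ref{MapStabToNS}) is described on $v^\perp$ by $l(\sigma)\cdot x=\Im\!\big({-}Z_\sigma(x)/Z_\sigma(v)\big)$. Since $\widetilde H^{1,1}(S,\alpha)_\matQ=\matQ v\oplus v^\perp_\matQ$ and $v^2\neq0$, the quantity $\Im\!\big(Z_\sigma(x)/Z_\sigma(v)\big)$ depends only on the $v^\perp$-component $x_0$ of $x$, and equals $-\,l(\sigma)\cdot x_0$; hence $x\in\calH_\calW$ iff $x_0$ lies in the line orthogonal to the hyperplane $\Span\,l(\calW)\subset\NS(M)_\matR$, i.e.\ $x_0\in\matR e$, i.e.\ $x\in\matR v+\matR e$. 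As $\calH_\calW$ is primitive in $\widetilde H(S,\alpha,\matZ)$, this gives $\calH_\calW=\overline{\langle v,e\rangle}$; in particular $e\in\calH_\calW$.

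Now $R_e\in\MonH(M)\subset\Mon(M)$ acts non-trivially on $A_M$, so Markman's criterion \cref{PropMarkmanConditionReIsMonodromyOp} forces $e^2=2-2n$ (the case $e^2=-2$ would give $R_e|_{A_M}=\Id$). Since $(v,e)=0$, the nonzero integral vector $v+e$ satisfies $(v+e)^2=v^2+e^2=0$, so after dividing by its content we obtain a primitive isotropic $w\in\calH_\calW$ with $(v,w)\neq0$ --- exactly the input for \S\ref{constructionFM}. Running that construction with this $w$ (set $S'=M_H(S,w)$ for $H$ generic with respect to $w$, let $\Phi\colon\D^b(S)\to\D^b(S',\alpha)$ be the associated Fourier--Mukai equivalence, so $\Phi(w)=(0,0,1)$, and take $\tau=\Phi(\sigma)[1]$) yields an isomorphism $M=M_\sigma(S,v)\xrightarrow{\ \sim\ }M_\tau(S',-\Phi(v),\alpha)$ under which $\calW$ maps to the vertical wall of $S'$ with respect to $-\Phi(v)$. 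An isomorphism of hyperk\"ahler manifolds induces a Hodge isometry of $H^2$ compatible with the Mukai isometry, so $R_e$ is carried to the reflection in (the $l$-image of) that vertical wall; this is the first assertion. For the ``in particular'', suppose $g\in\Bir(M)$ is an involution with $g^*=R_e$. Since $e\in\NS(M)$ is $q_M$-orthogonal to a generator of $H^{2,0}(M)$, $R_e$ fixes it, so $g$ is symplectic; then \cref{CoroSympReflectionActsNonTrivDiscGrp} gives $e^2=2-2n$ and $g^*|_{A_M}=-\Id\neq\Id$, and $g^*=R_e\in\MonB(M)\subset\MonH(M)$, so the first part applies and $g^*$ becomes the vertical reflection after conjugating by the isometry induced by $\Phi[1]$.

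I expect the real work to be in the second paragraph: matching the normalization of $l$ with the central charge carefully enough to pin $e$ down inside $\calH_\calW$ and to obtain $\calH_\calW=\overline{\langle v,e\rangle}$, and then keeping track of signs, shifts and the possible Brauer twist when transporting stability conditions and moduli spaces along $\Phi$. Once $e^2=-v^2$ is in hand the isotropic class, and hence the whole reduction, is immediate.
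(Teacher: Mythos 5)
Your proof follows the paper's argument essentially verbatim: use the hypothesis that $R_e$ acts non-trivially on $A_M$ together with Markman's criterion (\cref{PropMarkmanConditionReIsMonodromyOp}) to force $e^2=2-2n$, observe that $v+e\in\calH_\calW$ is then isotropic, and feed it into the construction of \S\ref{constructionFM}; your extra paragraph identifying $\calH_\calW=\overline{\langle v,e\rangle}$ via the map $l$ is a detail the paper leaves implicit. The one hypothesis you skip is $e^2<0$, which is needed before \cref{PropMarkmanConditionReIsMonodromyOp} can be invoked at all: the paper checks it by noting that $R_e\in\MonH(M)$ lies in $\MonB(M)$ or $W_{\text{Exc}}$ and that $e^2<0$ in either case (by \cref{LemmaDescriptionCoinv}, resp.\ \cite[Thm.\ 6.18]{Mar11}); this is easy to supply (for instance, $e^\perp$ meets the positive cone because it contains the wall $\calW\subset\overline{\Mov}(M)$), but it should be said.
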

 
 \begin{proof}
    As we have seen in the proof of Theorem \ref{MainThm2}, an element $R_e\in \MonH(M)$ lies either in $\MonB(M)$ or $W_\text{Exc}$. In the former case, $e^2<0$ by Lemma \ref{LemmaDescriptionCoinv}. In the latter case, $e^2<0$ by \cite[Thm. 6.18]{Mar11}. By Proposition \ref{PropMarkmanConditionReIsMonodromyOp}, we have $e^2=2-2n$, where $\dim M=v^2+2=2n$, in particular, $v+e\in \calH_\calW$ is an isotropic class. Therefore, the construction applies and we can assume that $\calW$ is the vertical wall.
  
 \end{proof}

    \begin{remark}\label{RmkContractionVerticalWallIsLiUhlenbeck}
       The contraction map $\pi:M\to \overline{M}$ of Theorem \ref{ThmBMContractionWall} induced by the vertical wall is the map to the Uhlenbeck compactification constructed by Li \cite{LiAlgGeomInterpretDonaldsonPoly}, see \cite[Section 8]{BMMMPwallcrossing}. When $r=1$, an isometry given by applying a combination of $\exp{(-kD)}$ for some $k\in \bbZ$ and the reflection $R_{(1,0,1)}$
       reduces $v$ to the case when $v=(1,0,1-n)$, i.e.\ $M$ is birational to a Hilbert scheme of points. In this case, the map $\pi$ is nothing but the Hilbert--Chow morphism, see \cite[Example $10.1$]{BMProjBirBridgelandModuli}.
    \end{remark}

	\section{Some Examples}

    \subsection{Markman's example redux}\label{SectionMarkExStabCond}

    In this section, we revisit in the light of stability conditions the reflection maps discussed in \S \ref{SectionMarkmanExamples}. We show that the reflection map $R_e$ associated to some particular $e$ can be obtained as a reflection on the vertical (flopping) wall in the halfplane of stability conditions.
	
	Let $S$ be a K3 surface with $\Pic(S)=\matZ\cdot H$, $H$ ample. Pick $v=(r,0,-s)$ with $s\geq r\geq 1$ and $\gcd(r,s)=1$. Set $M\coloneqq M_H(v)$. The image of the vertical wall in $\Stab^{\dagger}(S)$ in $H^2(M,\matZ)=v^\perp$ is generated by $e=(r,0,s)$. We see that $v$ satisfies the conditions (\ref{conditionrHighRank}), in particular, the reflection $R_e$ lies in $\Mon(M)$. We consider the same trichotomy as in \S \ref{SectionMarkmanExamples}.

	\begin{itemize}
	    \item $\mathbf{r=1}$: In this case, $M\simeq S^{[1+s]}$. The wall is easily seen to be divisorial (of type \textit{Hilbert-Chow}). Indeed, the vector $w=(1,0,0)$ is an isotropic vector satisfying $(w,v)=1$. In particular, by Remark \ref{RmkContractionVerticalWallIsLiUhlenbeck} the contraction morphism induced on the wall is the Hilbert-Chow morphism $S^{[1+s]}\to S^{(1+s)}$.
	    
	    \item $\mathbf{r=2}$: In this case, the vector $w=(0,0,-1)$ gives $(w,v)=2$. Hence by Theorem \ref{ThmLatticeTypeWall} the vertical wall is divisorial (of type Li--Gieseker--Uhlenbeck), and by Remark \ref{RmkContractionVerticalWallIsLiUhlenbeck} the contraction morphism induced on the wall is the morphism onto the Uhlenbeck--Yau compactification of the moduli space of $H$-slope stable vector bundles.
	    
	    \item $\mathbf{r>2}$: Since $s> r$, according to Proposition \ref{PropConditionVertDivisorialWall}, the vertical wall is not divisorial. However, the class $w=(1,0,1)$ satisfies $w^2 = -2$ and $0<(w,v)\leq (v,v)/2$, hence $\calW$ is a flopping wall. The birational map $M\dashrightarrow M, \calF \mapsto \calF^{\vee}$ (for $\calF$ that is $H$-slope stable and locally free sheaf) induces the reflection $R_e$ in cohomology.
	\end{itemize}

	\subsection{Hilbert schemes}\label{SectionExampleHilbScheme}

        Let $S$ be a K3 surface with $\Pic(S)=\matZ H$, $H^2=2d$, $d>1$, and let $M=S^{[n]}$. In this set-up, the authors in \cite{BeriCattaneoBiratTransfHilbSchemeK3Surf} have given a precise description of $\Bir(M)$ for all values of $n,d$. In some cases, $M$ admits a symplectic birational involution $\sigma$ acting non-trivially on $A_M$. In these cases, $\sigma^*$ acts as a reflection on $H^2(M,\matZ)$, and hence by Corollary \ref{CoroAllReflectionAreVertical}, up to replacing $S$ by a Fourier-Mukai partner, this reflection is in fact the reflection in the vertical wall. In what follows, we can construct in some cases an explicit isometry to relate birationally the involution $\sigma$ on $M$ with Markman's example described in \S \ref{SectionMarkmanExamples} and in \S \ref{SectionMarkExStabCond}.

        Pick any $r>3$, set $n=r^2d-r+1$, so that $M=S^{[n]}=M_H(v)$ with $v=(1,0,r(1-rd))$. Interestingly, in this case, the vertical wall is divisorial. Nonetheless, by \cite[Thm.\ 1.1]{BeriCattaneoBiratTransfHilbSchemeK3Surf} $M$ admits a symplectic birational involution $\sigma$ and the line $\ell\subset \NS(M)$ generated by $2rdH_M  -2d\delta$ is fixed by $\sigma$, where $H_M=(0,H,0)$ (a polarization of $M$) and  $\delta=(-1,0,r(1-rd))$ (half the class of the big diagonal) generate $\NS(M)$. Consider the isometry of $\widetilde{H}(S,\matZ)$ given by the composition
        $$ \varphi = \exp(-H)\circ -R_{(1,0,1)} \circ \exp(-rH).$$
        By direct computations, one can check that $\varphi(v)=(r,0,1-rd)$, so we obtain a birational map $\widetilde{\varphi}:M\dashrightarrow M'\coloneqq  M_H(r,0,1-rd)$ by Theorem \ref{ThmEmbedH2toK3lattice}.
        Note that the image of the fixed line $\ell$ in $\NS(M')$ is generated by 
        $(0,H,0)$, so the action of the composition 
        \begin{eqnarray}\label{EqnRelSnAndMarkEx}
         \psi\coloneqq \widetilde{\varphi}\circ \sigma \circ \widetilde{\varphi}^{-1}
         \end{eqnarray}
        induces an action on $\NS(M')$ just as in Markman's example described \S \ref{SectionMarkmanExamples} and \S \ref{SectionMarkExStabCond}. By injectivity of the map $\Bir(M') \to \ho(H^2(M',\matZ))$, the birational map $\psi$ is precisely given by $M'\dashrightarrow M', \ \calF \mapsto \calF^\vee$.
        
        Explicit geometric descriptions of the involutions classified in \cite{BeriCattaneoBiratTransfHilbSchemeK3Surf} are not easy to give (see \cite{BeriManivelABiratInvol}). It would be interesting to give a geometric interpretation of $\sigma\in \Bir(S^{[n]})$ from Markman's example using (\ref{EqnRelSnAndMarkEx}).
        
        \begin{remark}
           We should point out that this example is somewhat special: in view of \cite[Rmk. 9.21]{Mar11}, two moduli spaces $M_H(r_i,0,-s_i)$, $i=1,2$ with $r_1s_1=r_2s_2$ are not birational in general. 
        \end{remark}
        
        As a matter of course, the example leads to the following
    \begin{question}
        Is $S^{[n]}$ birational to $M_H(r,0,-s)$ with $s>r>2$ whenever it admits a symplectic birational involution?
    
    \end{question}

\end{document}